\newcommand\T{\rule{0pt}{2.6ex}}       
\newcommand\B{\rule[-1.2ex]{0pt}{0pt}} 
\numberwithin{equation}{section}
\newtheorem{lemma}{Lemma}
\newtheorem{theorem}{Theorem}
\newtheorem{proposition}{Proposition}
\begin{document}

\title{\textbf{\LARGE A one-dimensional diffusion model for overloaded queues
with customer abandonment}
\thanks{Supported in part by MOE AcRF Grant R-266-000-063-133 and NUS GAI Grant R-716-000-006-133.}}
\author{Shuangchi He
\thanks{Department of Industrial and Systems Engineering, National University of Singapore, \tt{heshuangchi@nus.edu.sg}}}
\date{December 15, 2013}
\maketitle
\begin{abstract}
We use an Ornstein--Uhlenbeck (OU) process to approximate
the queue length process in a $\mbox{GI}/\mbox{GI}/n+\mbox{M}$ queue.
This one-dimensional diffusion model is able to produce accurate
performance estimates in two overloaded regimes: In the first regime,
the number of servers is large and the mean patience time is comparable
to or longer than the mean service time; in the second regime, the number
of servers can be arbitrary but the mean patience time is much longer
than the mean service time. Using the diffusion model, we obtain Gaussian
approximations for the steady-state queue length and the steady-state
virtual waiting time. Numerical experiments demonstrate that the
approximate distributions are satisfactory for queues in these two regimes.

To mathematically justify the diffusion model, we formulate the two
overloaded regimes into an asymptotic framework by considering a sequence of
queues. The mean patience time goes to infinity in both asymptotic
regimes, whereas the number of servers approaches infinity in the first
regime but does not change in the second. The OU process is proved to be
the diffusion limit for the queue length processes in both regimes.
A crucial tool for proving the diffusion limit is a functional
central limit theorem for the superposition of time-scaled renewal processes.
We prove that the superposition of $n$ independent, identically distributed
stationary renewal processes, after being centered and scaled in both space
and time, converges in distribution to a Brownian motion as $n$ goes to
infinity.
\end{abstract}

\section{Introduction \label{sec:Introduction}}

Consider a $\mbox{GI}/\mbox{GI}/n+\mbox{M}$ queue. The customer arrival
process of this system is a renewal process and the service times
are independent, identically distributed (iid) nonnegative random
variables. Customers are served by $n$ identical servers. Upon arrival,
a customer gets into service if an idle server is available; otherwise,
he waits in a buffer with infinite room. Waiting customers are served
on the first-come, first-served basis, and the servers are not allowed
to idle if there are customers waiting. Each customer has a random
patience time. When a customer's waiting time exceeds his patience
time, the customer abandons the system without being served. The patience times
are iid following an exponential distribution, and the sequences of
interarrival, service, and patience times are mutually independent.

We are interested in the performance of this queue when it is overloaded,
i.e., the customer arrival rate is greater than the service capacity.
In this case, not all customers are able to receive service and a
fraction of them must abandon the system. We use a simple one-dimensional
diffusion process to approximate the scaled queue length process.
This diffusion process is an Ornstein--Uhlenbeck (OU) process. The
diffusion model is able to produce accurate performance estimates
when the queue is operated in either of the following two overloaded
regimes. In the first regime, the number of servers is large and the
mean patience time is comparable to or longer than the mean service
time. We call it the \emph{many-server overloaded regime}. In the
second regime, the number of servers can be arbitrary, but the mean
patience time is much longer (i.e., on a higher order) than the mean
service time. This regime is referred to as the \emph{long patience
overloaded regime}. In this paper, most efforts are focused on queues
in the many-server overloaded regime.

Queues with customer abandonment are used to model service systems. A call center with
many service agents is a typical example; see \citet{GansETAL03}
for a comprehensive review. Because the rate of incoming calls
changes over time, a call center may become overloaded during the
peak hours of a day. Waiting on a phone line, a customer may hang
up the phone before being connected to an agent. Although customer
abandonment is present in most call centers, empirical studies suggest
that customers are generally patient when they hold the line. It was
reported in \citet{MandelbaumETAL01} and \citet{MandelbaumZeltyn13}
that in the call center of an Israeli bank, the mean customer patience
time was at least several times longer than the mean service time.
The many-server overloaded
regime is thus relevant to call center operations. As pointed out by
\citet{Whitt06}, in service-oriented call centers, staffing costs
usually dominate the expenses of customer delay and abandonment. The
rational operational regime for these systems is the \emph{efficiency-driven
(ED) regime} that emphasizes server utilization over the quality of
service. In the ED regime, the service capacity is set below the customer
arrival rate by a moderate fraction. Because the lost service demands
of abandoning customers compensate for the excess in the arrival rate
over the service capacity, a many-server queue operated in the ED
regime can still achieve reasonable performance. More specifically,
the mean waiting time is comparable to the mean service time, a moderate
fraction of customers abandon the system, and all servers are almost
always busy. The ED regime is closely related to the many-server overloaded
regime studied in this paper.

For queues in the ED regime, a fluid model proposed by \citet{Whitt06}
is useful in estimating several performance measures, including the
fraction of abandonment, the mean queue length, and the mean virtual
waiting time. In the $\mbox{M}/\mbox{M}/n+\mbox{GI}$ setting, the accuracy of the
fluid model was studied by \citet{BassambooRandhawa10}. They proved
that in the steady state, the accuracy gaps of the fluid approximations
for the mean queue length and the rate of customer abandonment do
not increase with the arrival rate. This implies that fluid approximations
could be particularly accurate when the queue is operated in the ED
regime. Such a deterministic model, however, cannot be used to estimate
any nontrivial probability or distribution. In other words, we cannot
estimate the distribution of queue length or customer waiting time using the fluid
model. The performance targets of a service system may require a tail
probability to be less than a specified value, e.g.,
``80\% of customers wait less than 2
minutes.'' A refined model is thus necessary to obtain
such an estimate. The proposed one-dimensional diffusion
model offers a simple yet accurate refinement for the fluid
model. Although the exponential patience time assumption is
somewhat restrictive, by certain modification the diffusion model may
extend to $\mbox{GI}/\mbox{GI}/n+\mbox{GI}$ queues,
allowing for a general patience time
distribution; see Section~\ref{sec:conclusion} for an illustration.

Customers would wait long if the service is of critical importance. When such a
system gets overloaded, customer waiting times will be prolonged
significantly. The long patience overloaded regime is relevant to this
type of systems. One important example is an organ transplant
waiting list: The transplant candidates on the list form a queue;
when an organ is found, the candidate at the top of the list receives
the organ. These candidates may abandon the waiting list either because
of death, or because their health has deteriorated so that transplantation
is no longer appropriate. As the need for organs usually far exceeds
the supply of donors, a transplant candidate may have to wait for
years before transplantation. Such a system must be operated in the
long patience overloaded regime; see \citet{SuZenios04,SuZenios06}.
\citet{JenningsReed12} studied fluid and diffusion models for the
virtual waiting time process of a single-server queue in this regime.

As an OU process, the diffusion model has a Gaussian
stationary distribution. This fact allows us to approximate the steady-state
queue length and virtual waiting time distributions by Gaussian distributions.
The proposed diffusion model, whether for a many-server queue or for
a queue with one or several servers, depends on the interarrival and
service time distributions only through their first two moments. This
is in sharp contrast to the approximate models for many-server queues
in the literature, where the entire service time distribution is built
into the fluid or diffusion equations; see, e.g., \citet{Whitt06},
\citet{KangRamanan10}, \citet{MandelbaumMomcilovic12}, and \citet{Zhang13}.
With a general service time distribution, these approximate models
are either non-Markovian or deterministic. It is difficult to obtain
the steady-state queue length and virtual waiting time distributions
using these models. When the service time distribution is phase-type,
\citet{DaiHeTezcan10} proved a multi-dimensional diffusion limit
for many-server queues in an overloaded regime. As phase-type distributions
can approximate any positive-valued distribution, this model is still
relevant to queues with a general service time distribution. Using
this multi-dimensional model, \citet{DaiHe13} proposed a finite element
algorithm for computing the steady-state queue length distribution.
Although the algorithm is able to produce accurate performance estimates,
the computational complexity increases exponentially as the dimension
of the diffusion model grows. The curse of dimensionality is a serious
issue when the dimension is not small. In contrast, the diffusion
model proposed in this paper is a one-dimensional process that has
an explicit stationary distribution, thus leading to simple performance
formulas.

The one-dimensional diffusion model is rooted in the limit theorems
presented in Section~\ref{sec:Limits}. The diffusion limits for
queues in the many-server overloaded regime and in the long patience
overloaded regime can be found in Theorems~\ref{theorem:many-server}
and~\ref{theorem:long-patience}, respectively. Although the two limit
processes are identical, it is more challenging to prove the
diffusion limit in the many-server regime. In this regime,
we consider a sequence of queues indexed by the number of servers
$n$, and assume that the mean patience time goes to infinity as $n$
goes large. The queue length processes within this asymptotic framework
are scaled in both space and time, with the number of servers and
the mean patience time being the respective scaling factors. This
\emph{space-time scaling} is essential to obtain a
one-dimensional diffusion limit when the service time distribution
is general. In the asymptotic regimes specified in \citet{DaiHeTezcan10}
and \citet{MandelbaumMomcilovic12}, by contrast, the queue length
processes are scaled only in space and not in time. In this case,
only when the service time distribution is exponential, will the
scaled queue length processes converge to a one-dimensional Markov process.

The technique of scaling in both space and time
has been used in \citet{Whitt03,Whitt04,Gurvich04}, and
\citet{Atar12} for many-server queues with an exponential service time
distribution. It is not surprising that the diffusion limits in these
papers are one-dimensional. Theorem~\ref{theorem:many-server}
in our paper demonstrates that by the means of space-time scaling,
many-server queues with a general service time distribution may also have a
one-dimensional diffusion limit when they are overloaded.
The space-time scaling used in our model is similar to
the scaling used in Theorem~4.1 in \citet{Whitt04}, where a sequence of
$\mbox{M}/\mbox{M}/n/r+\mbox{M}$ queues is studied in an overloaded
regime. This regime allows either the number of servers or the mean patience
time or both of them to go to infinity, all of which lead to an OU
limit process. The latter two cases of this regime
correspond to the long patience overloaded regime and the many-server overloaded
regime, respectively. In this sense, Theorems~\ref{theorem:many-server}
and~\ref{theorem:long-patience} in our paper have extended the OU limit
for overloaded queues to a much more general setting.
A critically loaded regime, known as the nondegenerate slowdown regime,
is studied by \citet{Whitt03,Gurvich04}, and \citet{Atar12} for many-server
queues with an exponential service time distribution. In this regime, the
diffusion limit for the queue length processes, which is also scaled in both
space and time, is either a reflected OU process when the patience time distribution
is exponential, or a reflected Brownian motion when there is no abandonment.

The most important tool for proving the diffusion limit in the many-server
regime is a functional central limit theorem (FCLT) for the superposition
of time-scaled, stationary renewal processes, which is presented in
Theorem~\ref{theorem:FCLT}. The well-known FCLT for renewal processes
states that as the scaling factor goes to infinity, a \emph{time-scaled}
renewal process converges in distribution to a Brownian motion.
\citet{Whitt85} proved
an FCLT for the superposition of renewal processes, which
states that the superposition of $n$ iid stationary renewal processes,
after being scaled in \emph{space}, converges in distribution
to a Gaussian process. In general, this Gaussian process is not a
Brownian motion. Theorem~\ref{theorem:FCLT} in our paper is a supplement to these
results. We prove that the superposition of $n$ iid stationary renewal
processes, after being scaled in both \emph{space} and
\emph{time}, converges in distribution to a Brownian motion again.
This theorem allows us to approximate the scaled service completion
process by a Brownian motion, which is the key to
approximating the scaled queue length process by a one-dimensional
diffusion process. To apply this theorem, we consider a
sequence of perturbed systems that are asymptotically equivalent to the original queues but have
simpler dynamics. We assume that servers in a perturbed system
are always busy so that the service completion process
is the superposition of $n$ renewal processes. Using the
simplified dynamics of perturbed systems, we prove the many-server
diffusion limit by a standard continuous mapping approach.

The remainder of the paper is organized as follows. The diffusion model and
the approximate formulas are introduced in Section~\ref{sec:Diffusion}.
Their underlying limit theorems are presented in Section~\ref{sec:Limits}.
We examine the performance formulas by numerical examples in Section~\ref{sec:Numerical}.
Sections~\ref{sec:Proof-queue} and~\ref{sec:Proof-virtual} are dedicated to
the respective proofs of Theorems~\ref{theorem:many-server}
and~\ref{theorem:many-server-virtual}. Future research topics are discussed in Section~\ref{sec:conclusion}. We leave the proof of
Theorem~\ref{theorem:FCLT} to the appendix.

\subsection*{Notation}

All random variables and processes are defined on a common probability
space $(\Omega,\mathcal{F},\mathbb{P})$. We reserve $\mathbb{E}[\cdot]$
for expectation. The symbols $\mathbb{N}$, $\mathbb{N}_{0}$, $\mathbb{R}$,
and $\mathbb{R}_{+}$ are used to denote the sets of positive integers,
nonnegative integers, real numbers, and nonnegative real numbers,
respectively. The space of functions $f:\mathbb{R}_{+}\rightarrow\mathbb{R}$
that are right-continuous on $[0,\infty)$ and have left limits on
$(0,\infty)$ is denoted by $\mathbb{D}$, which is endowed with the
Skorohod $J_{1}$ topology. Given
an arbitrary function $f\in\mathbb{D}$ and a function $g\in\mathbb{D}$
that is nondecreasing and takes values in $\mathbb{R}_{+}$, $f\circ g$
denotes the composed function in $\mathbb{D}$ with $(f\circ g)(t)=f(g(t))$
for $t\geq0$. For a sequence of random variables (or processes) $\{\xi_{n}:n\in\mathbb{N}\}$
taking values in $\mathbb{R}$ (or $\mathbb{D}$), we write $\xi_{n}\overset{\text{a.s.}}{\rightarrow}\xi$
for the almost sure convergence of $\xi_{n}$ to $\xi$ and write
$\xi_{n}\Rightarrow\xi$ for the convergence of $\xi_{n}$ to $\xi$
in distribution, where $\xi$ is a random variable with values in
$\mathbb{R}$ (or a process with values in $\mathbb{D}$). For a random
variable $\xi$ with mean $m_{\xi}>0$ and variance $\sigma_{\xi}^{2}\geq0$,
the squared coefficient of variation of $\xi$ is defined by $c_{\xi}^{2}=\sigma_{\xi}^{2}/m_{\xi}^{2}$.
For any $a,b\in\mathbb{R}$, $a^{+}=\max\{a,0\}$, $a^{-}=\max\{-a,0\}$,
$a\vee b=\max\{a,b\}$, and $a\wedge b=\min\{a,b\}$. We use $e$
for the identity function on $\mathbb{R}_{+}$ and $\chi$ for the
constant one function on $\mathbb{R}_{+}$, i.e., $e(t)=t$ and $\chi(t)=1$
for $t\geq0$. For a fixed $s\geq0$, we use $e^{s}$ to denote the
identity function on $\mathbb{R}_{+}$ that is capped by $s$, i.e.,
$e^{s}(t)=s\wedge t$ for $t\geq0$.

\section{Diffusion model and performance formulas}
\label{sec:Diffusion}

Let $\lambda$ be the customer arrival rate and $\mu$
be the service rate of each server. Assume that both interarrival
times and service times have finite variances,
with squared coefficients of variations $c_{A}^{2}$ and $c_{S}^{2}$,
respectively. As the queue is overloaded, the traffic intensity satisfies
$\rho=\lambda/(n\mu)>1$. If all servers are almost always busy,
the fraction of abandoning customers can be approximated by
\begin{equation}
\alpha \approx\frac{\rho-1}{\rho}.
\label{eq:abd-fraction}
\end{equation}
Let $\gamma$ be the mean patience time.
Since patient times are exponentially distributed, each waiting
customer abandons the system at rate $1/\gamma$. When the
queue is in the steady state, the total abandonment rate from the
buffer must be around $n\mu(\rho-1)$ by the conservation of flow.
Hence, the mean queue length (i.e., the mean number of customers in the buffer)
can be approximated by
\begin{equation}
q \approx n\mu(\rho-1)\gamma.
\label{eq:mean-queue}
\end{equation}
Let $X(t)$ be the number of customers in the system at time~$t$, which fluctuates around $n+q$
as the queue comes into the steady state.
To describe the evolution of queue length around the mean,
we introduce a scaled version of $X$ by
\[
\tilde{X}(t)=\frac{1}{\sqrt{n\gamma}}(X(\gamma t)-n-q).
\]
We call $\tilde{X}$ the \emph{scaled queue length process}. Note
that after the mean is removed, $X$ is scaled in both space and time.
Besides the commonly used scaling in space by the number of servers,
we also change the time scale of the process with the mean patience
time as the factor. We propose to use a one-dimensional diffusion
process $\hat{X}$ to approximate the scaled queue length process.
The initial value of $\hat{X}$ may be taken as $\hat{X}(0)=\tilde{X}(0)$.
This diffusion process is an OU process that satisfies the following
stochastic differential equation
\begin{equation}
\hat{X}(t)=\hat{M}(t)-\int_{0}^{t}\hat{X}(u)\,\mathrm{d}u\quad\mbox{for }t\geq0.\label{eq:OU}
\end{equation}
Here, $\hat{M}$ is a driftless Brownian motion with variance
$\mu(\rho c_{A}^{2}+c_{S}^{2}+\rho-1)$ and $\hat{M}(0)=\hat{X}(0)$.

The OU process is a reasonable model because the queue length process
is mean-reverting: At any time, the instantaneous customer abandonment
rate from the buffer is proportional to the queue length; when the
queue length is either too long or too short, the increased or decreased
abandonment rate will pull it back to the equilibrium level. For the
diffusion model to be accurate, the mean patience time $\gamma$,
serving as the scaling factor in time, should be relatively long compared
with the mean service time. More specifically, this model is able
to produce satisfactory performance approximations for queues
in the many-server overloaded regime and queues in the long patience overloaded
regime. The diffusion model in these two regimes is formalized by
Theorems~\ref{theorem:many-server} and~\ref{theorem:long-patience},
where both regimes are built into an asymptotic framework
and $\hat{X}$ is proved to be the limit of the
scaled queue length processes.
Although the mean patience time goes to infinity in the asymptotic
framework, the diffusion model may still work well
for a many-server queue when the mean patience time is comparable
to or just several times longer than the mean service time. If
the number of servers is not many, however, the mean patience time
is usually required to be much longer than the mean service time.
See Section~\ref{sec:Numerical} for further discussion.

The one-dimensional diffusion model yields useful performance approximations.
It is well known that the stationary distribution of the OU process
is Gaussian. In particular, $\hat{X}$ has a Gaussian stationary distribution with
mean $0$ and variance $\mu(\rho c_{A}^{2}+c_{S}^{2}+\rho-1)/2$. Let
$X(\infty)$ be the stationary number of
customers in the system and $\tilde{X}(\infty)$ be the scaled version.
Because $\hat{X}$ is an approximation of $\tilde{X}$, their steady-state
distributions are expected to be close, i.e.,
\begin{equation}
\mathbb{P}[\tilde{X}(\infty)>a]\approx1-\Phi\bigg(\frac{\sqrt{2}a}{\sqrt{\mu(\rho c_{A}^{2}+c_{S}^{2}+\rho-1)}}\bigg)\quad\mbox{for }a\in\mathbb{R},
\label{eq:steadyQ}
\end{equation}
where $\Phi$ is the standard Gaussian distribution function.
As a result, the steady-state queue length approximately
follows a Gaussian distribution with mean $q$ and variance
\begin{equation}
\sigma_{Q}^2\approx \frac{n\gamma\mu}{2}(\rho c_{A}^{2}+c_{S}^{2}+\rho-1).
\label{eq:var-queue}
\end{equation}

Suppose that at time $s\geq0$, a hypothetical customer with infinite
patience arrives at the queue. Let $W(s)$ be the amount of time this
hypothetical customer has to wait before getting into service.
This waiting time is called the \emph{virtual waiting time} at~$s$.
In the steady state, the virtual waiting time process fluctuates around
its mean $w$, which can be determined as follows. As
the patience time distribution is exponential with mean $\gamma$,
the fraction of customers whose patience times are longer than $w$
is $\exp(-w/\gamma)$. This fraction should be approximately equal to the fraction
of customers who eventually receive service, so that $\exp(-w/\gamma)\approx1/\rho$,
or
\begin{equation}
w \approx \gamma\log\rho.
\label{eq:mean-virtual}
\end{equation}
We are interested in the distribution of the steady-state virtual waiting
time. Let $W(\infty)$ be the virtual waiting
time in the steady state, which has a scaled version
\[
\tilde{W}(\infty)=\sqrt{n\gamma^{-1}}(W(\infty)-w).
\]
Theorems~\ref{theorem:many-server-virtual} and~\ref{theorem:long-patience-virtual}
in Section~\ref{sec:Limits} imply that $\tilde{W}(\infty)$ approximately
follows a Gaussian distribution with mean $0$ and variance
$(c_{A}^{2}+\rho c_{S}^{2}+\rho-1)/(2\mu\rho)$, i.e.,
\begin{equation}
\mathbb{P}[\tilde{W}(\infty)>a]\approx 1-
\Phi\bigg(\frac{a\sqrt{2\mu\rho}}{\sqrt{c_{A}^{2}+\rho c_{S}^{2}+\rho-1}}\bigg)\quad\mbox{for }a\in\mathbb{R}.
\label{eq:steadyW}
\end{equation}
Hence, the virtual waiting time in the steady state approximately
follows a Gaussian distribution with mean $w$ and
variance
\begin{equation}
\sigma_{W}^{2}\approx \frac{\gamma}{2n\mu\rho}(c_{A}^{2}+\rho c_{S}^{2}+\rho-1).
\label{eq:var-virtual}
\end{equation}
Formulas \eqref{eq:steadyQ} and \eqref{eq:steadyW} provide approximate
distributions for the queue length and virtual waiting time in the steady state.
They will be examined in Section~\ref{sec:Numerical}.

\section{Limit theorems \label{sec:Limits}}

In this section, we state the underlying limit theorems for the
diffusion model and the approximate formulas.
The theorems for queues in the many-server overloaded regime and in
the long patience overloaded regime are presented in
Sections~\ref{subsec:many-server-limit}
and~\ref{subsec:long-patience-limit}, respectively.

\subsection{Limits in the many-server overloaded regime
\label{subsec:many-server-limit}}

To formulate the many-server overloaded regime, let us consider a
sequence of $\mbox{G}/\mbox{GI}/n+\mbox{M}$ queues indexed by
the number of servers $n$. The arrival processes in these queues
are not required to be renewal. In each queue, the number of initial
customers, the arrival process, the sequence of service times, and
the sequence of patience times are mutually independent. All these
queues have the same traffic intensity $\rho>1$ and the same service
time distribution. Because the service rate $\mu$ is invariant, the
arrival rate of the $n$th system is
\begin{equation}
\lambda_{n}=n\rho\mu.\label{eq:lambda_n}
\end{equation}
We assume that the mean patience time goes to infinity as $n$ goes
large, i.e.,
\begin{equation}
\gamma_{n}\rightarrow\infty\quad\mbox{as }n\rightarrow\infty.\label{eq:gamma_n}
\end{equation}

Let $F$ be the distribution function of service times. As in \citet{Whitt85},
a mild regularity condition is imposed on $F$, i.e.,
\begin{equation}
\limsup_{t\downarrow0}t^{-1}(F(t)-F(0))<\infty.\label{eq:conditionF}
\end{equation}
We also assume that $F$ has a finite third moment, i.e.,
\begin{equation}
\int_{0}^{\infty}t^{3}\,\mathrm{d}F(t)<\infty.
\label{eq:m3}
\end{equation}
Then, the equilibrium distribution of $F$ is given by
\[
F_{e}(t)=\mu\int_{0}^{t}(1-F(u))\,\mathrm{d}u\quad\mbox{for }t\geq0\mbox{.}
\]
We assign service times to customers according to the following procedure. Let $\{\xi_{j,k}:j,k\in\mathbb{N}\}$
be a double sequence of independent nonnegative random variables.
For each $j\in\mathbb{N}$, we assume that
\begin{equation}
\xi_{j,1}\mbox{ follows distribution }F_{e}\mbox{ and }\xi_{j,k}\mbox{ follows distribution }F\mbox{ for }k\geq2.\label{eq:service}
\end{equation}
In the $n$th system, assume that all $n$ servers are busy at time~$0$.
For $j=1,\ldots,n$, $\xi_{j,1}$ is assigned to the initial customer
served by the $j$th server as the residual service time at time
$0$. For $k\geq2$, $\xi_{j,k}$ is the service
time of the $k$th customer served by the $j$th server. By this assignment,
for all $j,k\in\mathbb{N}$, the $k$th service time by the $j$th
server is identical in all systems that have at least $j$ servers.

Let $E_{n}(t)$ be the number of arrivals in the $n$th system during
time interval $(0,t]$. Define the diffusion-scaled arrival process
$\tilde{E}_{n}$ by
\[
\tilde{E}_{n}(t)=\frac{1}{\sqrt{n\gamma_{n}}}(E_{n}(\gamma_{n}t)-\lambda_{n}\gamma_{n}t).
\]
Let $N$ be a renewal process whose interrenewal times have mean $1$
and variance $c_{A}^{2}$. If $E_{n}$ is renewal with
$E_{n}(t)=N(\lambda_{n}t)$, it follows from \eqref{eq:lambda_n}
and the FCLT for renewal processes that
\begin{equation}
\tilde{E}_{n}\Rightarrow\hat{E}\quad\mbox{as }n\rightarrow\infty\mbox{,}\label{eq:E}
\end{equation}
where $\hat{E}$ is a driftless Brownian motion with variance $\rho\mu c_{A}^{2}$
and $\hat{E}(0)=0$. To allow for more general arrival
processes, we take \eqref{eq:E} as an assumption rather than require
each $E_{n}$ to be renewal. Let $X_{n}(t)$ be the number of customers
in the $n$th system at time $t$, which has a diffusion-scaled version
\[
\tilde{X}_{n}(t)=\frac{1}{\sqrt{n\gamma_{n}}}(X_{n}(\gamma_{n}t)-n-n\mu(\rho-1)\gamma_{n}).
\]
We assume that there exists a random variable $\hat{X}(0)$ such that
\begin{equation}
\tilde{X}_{n}(0)\Rightarrow\hat{X}(0)\quad\mbox{as }n\rightarrow\infty\mbox{.}\label{eq:initial}
\end{equation}

The first theorem states the diffusion limit for queue length processes
in the many-server overloaded regime. It justifies the diffusion model
when the queue has many servers.

\begin{theorem}
\label{theorem:many-server}
Let $\hat{X}$ be the OU process given by \eqref{eq:OU}. Assume that
the sequence of $\mbox{G}/\mbox{GI}/n+\mbox{M}$ queues, each indexed
by the number of servers $n$, satisfies \eqref{eq:lambda_n}--\eqref{eq:initial}
with $\rho>1$.
Then,
\[
\tilde{X}_{n}\Rightarrow\hat{X}\quad\mbox{as }n\rightarrow\infty\mbox{.}
\]
\end{theorem}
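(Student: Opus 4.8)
The plan is to reduce the analysis to a perturbed system whose servers are always busy, so that the service-completion process becomes the superposition of $n$ time-scaled stationary renewal processes, then invoke Theorem~\ref{theorem:FCLT} to approximate it by a Brownian motion, and finally recover $\hat{X}$ via the continuous mapping theorem. First I would write the exact queue-length identity
\[
X_{n}(t)=X_{n}(0)+E_{n}(t)-D_{n}(t)-R_{n}(t),
\]
where $D_{n}$ counts service completions and $R_{n}$ counts abandonments up to time $t$. Since the patience times are exponential with mean $\gamma_{n}$, the abandonment process satisfies $R_{n}(t)=\Pi_{n}\!\big(\gamma_{n}^{-1}\int_{0}^{t}(X_{n}(u)-n)^{+}\,\mathrm{d}u\big)$ for a unit-rate Poisson process $\Pi_{n}$ independent of everything else. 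Under the space-time scaling $\tilde{X}_{n}(t)=(n\gamma_{n})^{-1/2}(X_{n}(\gamma_{n}t)-n-n\mu(\rho-1)\gamma_{n})$, the centering $n+n\mu(\rho-1)\gamma_{n}$ is exactly the fluid equilibrium, and one checks that the scaled abandonment term contributes the drift $-\int_{0}^{t}\tilde{X}_{n}(u)\,\mathrm{d}u$ plus asymptotically negligible error, because $(X_{n}(u)-n)^{+}\approx n\mu(\rho-1)\gamma_{n}+\sqrt{n\gamma_{n}}\,\tilde{X}_{n}(u/\gamma_{n})$ on the fluid scale and the queue stays strictly positive with high probability in an overloaded regime.

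Next I would handle the service-completion process. In the perturbed system all $n$ servers are continuously busy, so $D_{n}^{\mathrm{pert}}(t)=\sum_{j=1}^{n}S_{j}(t)$, where each $S_{j}$ is a stationary renewal process whose interrenewal times have mean $1/\mu$ and squared coefficient of variation $c_{S}^{2}$ (the first service time being drawn from the equilibrium distribution $F_{e}$, per \eqref{eq:service}). Centering and applying the space-time scaling, Theorem~\ref{theorem:FCLT} gives that $(n\gamma_{n})^{-1/2}(D_{n}^{\mathrm{pert}}(\gamma_{n}t)-n\mu\gamma_{n}t)$ converges in distribution to a driftless Brownian motion with variance $\mu c_{S}^{2}$. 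The condition \eqref{eq:conditionF} is precisely Whitt's regularity hypothesis needed for the FCLT, and the finite third moment \eqref{eq:m3} provides the uniform integrability required to control the superposition. Combining this with the arrival-process assumption \eqref{eq:E}, the independence of arrivals, services, and the Poisson abandonment clock, and the initial-condition convergence \eqref{eq:initial}, the driving term $\hat{M}_{n}(t):=\tilde{X}_{n}(0)+\tilde{E}_{n}(t)-(\text{scaled centered }D_{n})-(\text{scaled centered Poisson})$ converges jointly to $\hat{M}$, a Brownian motion with variance $\mu(\rho c_{A}^{2}+c_{S}^{2}+\rho-1)$, since the abandonment-clock fluctuation contributes an independent Brownian component of variance $\mu(\rho-1)$.

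The remaining step is to pass from the perturbed system to the original one and to invert the integral equation. I would show asymptotic equivalence by a sandwich/coupling argument: construct the perturbed system on the same probability space, bound $|X_{n}(t)-X_{n}^{\mathrm{pert}}(t)|$ by the cumulative server idleness, and argue that in the overloaded regime idleness occurs only during a transient of length $O(1)$ on the original time scale, hence $o(\sqrt{n\gamma_{n}})$ after scaling; a careful bound uses that the fluid trajectory is bounded away from $n$ for $t>0$. Once the perturbation is controlled, the scaled system satisfies $\tilde{X}_{n}(t)=\hat{M}_{n}(t)-\int_{0}^{t}\tilde{X}_{n}(u)\,\mathrm{d}u+o_{p}(1)$ uniformly on compacts. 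Because the map sending $\hat{M}_{n}$ to the solution of the linear integral equation $x=\hat{M}_{n}-\int_{0}^{\cdot}x$ is continuous from $\mathbb{D}$ to $\mathbb{D}$ (it is the resolvent $x(t)=\hat{M}_{n}(t)-\int_{0}^{t}e^{-(t-u)}\hat{M}_{n}(u)\,\mathrm{d}u$, a bounded linear operator), the continuous mapping theorem yields $\tilde{X}_{n}\Rightarrow\hat{X}$, where $\hat{X}$ solves \eqref{eq:OU}. The main obstacle is the perturbation step: rigorously showing that the difference between the true overloaded queue and the always-busy surrogate is negligible on the space-time scale requires uniform control of the pre-limit dynamics (in particular a lower-bound fluid estimate keeping $X_{n}$ above $n$), and this is where most of the technical work in Section~\ref{sec:Proof-queue} will be concentrated.
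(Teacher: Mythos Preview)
Your proposal is correct and matches the paper's approach: perturbed always-busy system, Theorem~\ref{theorem:FCLT} for the superposed service completions, the FCLT for the Poisson abandonment clock composed with the fluid time change to pick up the extra variance $\mu(\rho-1)$, and the continuous map $\psi$ from $\hat{M}$ to the OU solution. The only refinement the paper makes over your sketch is in the equivalence step: rather than bounding $|X_{n}-Y_{n}|$ by cumulative idleness (which is awkward, since after the first idle instant the two service streams diverge pathwise and the difference is not simply the idle time), the paper observes that $X_{n}(t)=Y_{n}(t)$ \emph{exactly} on $[0,\tau_{n}]$ with $\tau_{n}=\inf\{t:Y_{n}(t)<n\}$, proves the fluid limit $\bar{Y}_{n}\Rightarrow\mu(\rho-1)\chi$ for the perturbed system, and deduces $\gamma_{n}^{-1}\tau_{n}\to\infty$ in probability---so on any compact in scaled time the two processes coincide with probability tending to one, and no quantitative idleness estimate is needed.
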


The second theorem concerns virtual waiting times in these queues.
Let $W_{n}(s)$ be the virtual waiting time at $s\geq0$ in the
$n$th queue. A scaled version is defined by
\[
\bar{W}_{n}(s)=\gamma_{n}^{-1}W_{n}(\gamma_{n}s).
\]
By \eqref{eq:mean-virtual}, we expect $\bar{W}_{n}(s)$ to be close to $\log\rho$. To obtain a refined approximation, we further
define
\[
\tilde{W}_{n}(s)=\sqrt{n\gamma_{n}}(\bar{W}_{n}(s)-\log\rho),
\]
which describes the variation of the virtual waiting time
around the mean. Theorem~\ref{theorem:many-server-virtual}
states that $\tilde{W}_{n}(s)$ converges in distribution as $n$ goes
large.

Let us introduce several processes to state this theorem.
Fix $s\geq0$. Let $\hat{G}^{s}$ be a standard Brownian motion (the
superscript emphasizes that the process may change with~$s$) and $\hat{B}$
be a driftless Brownian motion with variance $\mu c_{S}^{2}$ and
$\hat{B}(0)=0$. Assume that $\hat{X}(0)$, $\hat{E}$,
$\hat{G}^{s}$, and $\hat{B}$ are mutually independent. Define a function
$y^{s}:\mathbb{R}_{+}\rightarrow\mathbb{R}$ by
\begin{equation}
y^{s}(t)=\begin{cases}
(\rho-1)\mu & \mbox{for }0\leq t<s,\\
(\rho\exp(s-t)-1)\mu & \mbox{for }s\leq t<s+\log\rho,\\
-\mu(t-s-\log\rho) & \mbox{for }t\geq s+\log\rho.
\end{cases}\label{eq:ys}
\end{equation}

\begin{theorem}
\label{theorem:many-server-virtual}
Under the conditions of Theorem~\ref{theorem:many-server}, for any
given $s\geq0$,
\[
\tilde{W}_{n}(s)\Rightarrow\mu^{-1}\hat{Y}^{s}(s+\log\rho)\quad\mbox{as }n\rightarrow\infty,
\]
where $\hat{Y}^{s}$ satisfies the stochastic differential
equation
\[
\hat{Y}^{s}(t)=\hat{X}(0)+\hat{E}(s\wedge t)-\hat{B}(t)-\hat{G}^{s}\Big(\int_{0}^{t}y^{s}(u)\,\mathrm{d}u\Big)-\int_{0}^{t}\hat{Y}^{s}(u)\,\mathrm{d}u
\quad\mbox{for } 0\leq t\leq s+\log\rho.
\]
In particular,
\begin{multline}
\hat{Y}^{s}(s+\log\rho)=\exp(-s-\log\rho)\Big(\hat{X}(0)+\int_{0}^{s}\exp(u)\,\mathrm{d}\hat{E}(u)-\int_{0}^{s+\log\rho}\exp(u)\,\mathrm{d}\hat{B}(u)\\
-\int_{0}^{s+\log\rho}y^{s}(u)^{1/2}\exp(u)\,\mathrm{d}\hat{G}^{s}(u)\Big).
\label{eq:Ystop}
\end{multline}
\end{theorem}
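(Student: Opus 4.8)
The plan is to follow a tagged (virtual) customer who arrives at scaled time $s$ with infinite patience and to study the process $V_n$ counting the customers still ahead of him in the buffer. Because service is first-come, first-served, every customer who arrives after him stays behind him, so only the customers occupying the buffer at scaled time $s$ (and those then in service) can delay him: his position falls by one at each service completion---all $n$ servers are busy in the overloaded regime, which is controlled via the perturbed-system device already used in the proof of Theorem~\ref{theorem:many-server}---and by one whenever a customer ahead of him abandons. Since patience times are exponential, at each instant the customers still ahead of him number $V_n$, each abandoning at rate $1/\gamma_n$, so the abandonment-of-those-ahead is a counting process with intensity $V_n(\cdot)/\gamma_n$. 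Writing the balance equation for $V_n$ and centering by the number of servers and the mean patience time reveals the fluid position: on $[0,s]$ it is the fluid queue length $(\rho-1)\mu$, and on $[s,\infty)$ it solves $\dot v=-\mu-v$ from $v(s)=(\rho-1)\mu$, giving $v(t)=\mu(\rho e^{s-t}-1)$, which vanishes at $t=s+\log\rho$. This is exactly the function $y^s$ of \eqref{eq:ys} on $[0,s+\log\rho]$, and the crossing is transversal: $\dot y^s(s+\log\rho)=-\mu\neq0$.

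First I would show that the diffusion-scaled position $\tilde V_n^s$---equal to $\tilde X_n$ before the tagged arrival and to $(n\gamma_n)^{-1/2}(V_n(\gamma_n t)-n\gamma_n y^s(t))$ afterwards---converges in distribution on $[0,s+\log\rho]$ (extending its dynamics a negligible amount past $s+\log\rho$ if need be, the abandonment rate being zero there) to the solution of the stated linear integral equation. On $[0,s]$ this is the expansion used for Theorem~\ref{theorem:many-server}; combining it with the analogous computation on $[s,s+\log\rho]$, where there is no new arrival input, yields for $0\le t\le s+\log\rho$
\[
\tilde V_n^s(t)=\tilde X_n(0)+\tilde E_n(s\wedge t)-\hat B_n(t)-\hat C_n^s(t)-\int_0^t\tilde V_n^s(u)\,\mathrm{d}u,
\]
where the deterministic $\sqrt{n\gamma_n}$-terms cancel because $\dot y^s=-\mu-y^s$ on $[s,s+\log\rho]$. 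Here $\tilde E_n(s\wedge\cdot)\Rightarrow\hat E(s\wedge\cdot)$ by \eqref{eq:E}; the space-time-scaled service-completion process $\hat B_n$ converges to $\hat B$ by the FCLT for superpositions of stationary renewal processes, Theorem~\ref{theorem:FCLT}, the residual-service-time rule \eqref{eq:service} being exactly what puts the $n$ renewal streams in stationarity; and the compensated abandonment process $\hat C_n^s$ converges to $\hat G^s(\int_0^{\cdot} y^s)$ by the martingale FCLT, since its predictable quadratic variation is $(n\gamma_n)^{-1}\int_0^t V_n(\gamma_n u)\,\mathrm{d}u\to\int_0^t y^s(u)\,\mathrm{d}u$ and its jumps vanish. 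Mutual independence of the limits $\hat X(0),\hat E,\hat B,\hat G^s$ comes from independence of the arrival, service, and patience inputs, and joint convergence follows. Since the map sending the driving process to the solution of the displayed equation is continuous, the continuous-mapping theorem gives $\tilde V_n^s\Rightarrow\hat Y^s$.

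Next I would pass from the position process to the waiting time. By definition $s+\bar W_n(s)$ is the first time $V_n$ reaches $0$, i.e.\ the first $t$ with $\tilde V_n^s(t)=-\sqrt{n\gamma_n}\,y^s(t)$. Near $t^\ast:=s+\log\rho$ one has $y^s(t^\ast+h)=-\mu h+o(h)$, and since $n\gamma_n y^s$ decreases there at rate $-n\gamma_n\mu$ while the fluctuation is only $O(\sqrt{n\gamma_n})$, for large $n$ the level $0$ is crossed exactly once and transversally; the standard first-passage (inverse-function) argument then yields $\sqrt{n\gamma_n}(\bar W_n(s)-\log\rho)\Rightarrow\mu^{-1}\hat Y^s(s+\log\rho)$, which is the asserted limit of $\tilde W_n(s)$. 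Finally, solving the linear equation for $\hat Y^s$ by variation of parameters gives $\hat Y^s(t)=e^{-t}\big(\hat X(0)+\int_0^t e^u\,\mathrm{d}Z^s(u)\big)$, where $Z^s=\hat X(0)+\hat E(s\wedge\cdot)-\hat B-\hat G^s(\int_0^{\cdot} y^s)$; rewriting the time-changed stochastic integral as $\int_0^t e^u y^s(u)^{1/2}\,\mathrm{d}\hat G^s(u)$ and evaluating at $t=s+\log\rho$---so that the arrival term contributes only $\int_0^s\exp(u)\,\mathrm{d}\hat E(u)$---produces \eqref{eq:Ystop}.

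The step I expect to be the main obstacle is the diffusion limit of the position process, and within it the abandonment-of-those-ahead term, whose intensity is driven by the very process under study: one must split it into compensator and martingale, Taylor-expand the compensator about the fluid curve $y^s$ to produce the $-\int_0^t\tilde V_n^s$ feedback term, and then close the loop with a Gronwall-type estimate---all while controlling, uniformly on $[0,s+\log\rho]$, the event that every server stays busy so that service completions genuinely form a superposition of $n$ renewal processes. This last point is what ties the analysis back to the perturbed systems used to prove Theorem~\ref{theorem:many-server}, and it explains why Theorem~\ref{theorem:many-server-virtual} is stated under the hypotheses of that theorem.
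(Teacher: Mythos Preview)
Your proposal is correct and follows essentially the same route as the paper: your tagged-customer device is equivalent to the paper's stopped-arrival perturbed system $Y_n^s$ (the ``position'' $V_n$ is just $Y_n^s-n$), and from there both arguments establish the fluid limit $y^s$, the diffusion limit $\hat Y^s$ via the continuous map $\psi$, and then pass to the waiting time by a first-passage argument at the transversal zero $t=s+\log\rho$ of $y^s$. The only technical difference is that you invoke the martingale FCLT for the compensated abandonment term, whereas the paper uses a Poisson random-time-change representation $G^s(\gamma_n^{-1}\int_0^t(Y_n^s-n)^+\,\mathrm{d}u)$ together with the fluid limit and the random-time-change theorem; both routes yield the same limit $\hat G^s(\int_0^\cdot y^s)$.
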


Put $\hat{W}(s)=\hat{Y}^{s}(s+\log\rho)/\mu$. As $s$ goes large, $\hat{W}(s)$
converges in distribution to a Gaussian random variable with
mean $0$ and variance $(c_{A}^{2}+\rho c_{S}^{2}+\rho-1)/(2\mu\rho)$,
which leads to formula \eqref{eq:steadyW}.

The third theorem plays an essential role in proving Theorems~\ref{theorem:many-server}
and \ref{theorem:many-server-virtual}. It is an FCLT for the superposition
of time-scaled, stationary renewal processes. These renewal processes
are defined as follows. For $t\geq0$ and $j\in\mathbb{N}$, let
\begin{equation}
N_{j}(t)=\max\{k\in\mathbb{N}_{0}:\xi_{j,1}+\cdots+\xi_{j,k}\leq t\}.\label{eq:renewal}
\end{equation}
 As a convention, we take $N_{j}(t)=0$ if $\xi_{j,1}>t$. By \eqref{eq:service}, each $N_{j}$ is a delayed renewal process
with delay distribution $F_{e}$ and interrenewal distribution $F$.
Because $F_{e}$ is the equilibrium distribution of $F$, $\{N_{j}:j\in\mathbb{N}\}$
is a sequence of iid stationary renewal processes.

\begin{theorem}
\label{theorem:FCLT} Let $\{N_{j}:j\in\mathbb{N}\}$ be a sequence
of iid stationary renewal processes, i.e., the delay distribution
$F_{e}$ of each renewal process is the equilibrium distribution of the
interrenewal distribution $F$. Assume that $F$ has mean $1/\mu$ and
satisfies \eqref{eq:conditionF} and \eqref{eq:m3}. Let
\begin{equation}
B_{n}(t)=\sum_{j=1}^{n}N_{j}(t)\label{eq:B}
\end{equation}
and $\{\gamma_{n}:n\in\mathbb{N}\}$ be a sequence of positive numbers
such that $\gamma_{n}\rightarrow\infty$ as $n\rightarrow\infty$.
Then,
\[
\tilde{B}_{n}\Rightarrow\hat{B}\quad\mbox{as }n\rightarrow\infty\mbox{,}
\]
where
\begin{equation}
\tilde{B}_{n}(t)=\frac{1}{\sqrt{n\gamma_{n}}}(B_{n}(\gamma_{n}t)-n\mu\gamma_{n}t)\label{eq:B_tilde}
\end{equation}
and $\hat{B}$ is a driftless Brownian motion with variance $\mu c_{S}^{2}$
and $\hat{B}(0)=0$.
\end{theorem}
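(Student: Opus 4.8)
The plan is to establish the two properties that together characterize convergence in $(\mathbb{D},J_1)$ to a Brownian motion: convergence of the finite-dimensional distributions and tightness. The structural fact driving everything is that, for each fixed $n$, the process $\tilde{B}_n$ equals $n^{-1/2}$ times a sum of $n$ \emph{iid} copies of the single scaled renewal process $U_n:=\gamma_n^{-1/2}\big(N_1(\gamma_n\,\cdot)-\mu\gamma_n\,\cdot\big)$; and since $\gamma_n\to\infty$, the classical FCLT for a stationary renewal process already gives $U_n\Rightarrow\hat{B}$, with asymptotic variance parameter $\mu^{3}(c_S^2/\mu^2)=\mu c_S^2$. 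Thus $\tilde{B}_n$ is a superposition of $n$ iid processes each of which, on its own, converges to the correct limit, and the task is to show that averaging does not destroy this.

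For the finite-dimensional distributions, fix $0=t_0<t_1<\dots<t_m$ and, via the Cram\'er--Wold device, consider an arbitrary linear combination of the increments $\tilde{B}_n(t_i)-\tilde{B}_n(t_{i-1})$; it has the form $n^{-1/2}\sum_{j=1}^{n}\zeta_{n,j}$ with the $\zeta_{n,j}$ iid over $j$ and of mean zero, the last because each $N_j$ is stationary and hence $\mathbb{E}[N_j(t)]=\mu t$ exactly. I would then invoke the Lindeberg--Feller CLT for triangular arrays, which needs: (a) the variance--time asymptotics $\gamma_n^{-1}\mathrm{Var}\big(N_1(\gamma_n t)\big)\to\mu c_S^2\,t$ for a stationary renewal process, a classical consequence of the renewal theorem; (b) the asymptotic decorrelation of increments over disjoint intervals, i.e. $\gamma_n^{-1}\mathrm{Cov}\big(N_1(\gamma_n t_i)-N_1(\gamma_n t_{i-1}),\,N_1(\gamma_n t_{i'})-N_1(\gamma_n t_{i'-1})\big)\to0$ for $i\ne i'$, because this covariance stays bounded as its endpoints grow (a renewal-theoretic mixing estimate, to which \eqref{eq:m3} supplies a fast enough rate), so the limiting increments are independent with the right variances; and (c) the Lindeberg condition itself, which follows once (a) holds and $U_n(t)\Rightarrow$ Gaussian, since convergence in distribution plus convergence of second moments forces $\{U_n(t)^2\}_n$ to be uniformly integrable. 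This identifies the limiting finite-dimensional distributions as those of $\hat{B}$.

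For tightness in $(\mathbb{D},J_1)$, note first that the jumps of $\tilde{B}_n$ have size $O\big((n\gamma_n)^{-1/2}\big)\to0$, so with a continuous limit it suffices to verify a modulus-of-continuity bound uniform in $n$. Here I would exploit the row-wise iid structure and a Rosenthal (Marcinkiewicz--Zygmund) inequality applied to the third absolute moment: writing $\tilde{B}_n(t)-\tilde{B}_n(s)=(n\gamma_n)^{-1/2}\sum_{j=1}^{n}Y_{n,j}$ with $Y_{n,j}$ iid and mean zero, one obtains $\mathbb{E}\big[|\tilde{B}_n(t)-\tilde{B}_n(s)|^{3}\big]\le C(t-s)^{3/2}+(n\gamma_n)^{-1/2}C(t-s)$, using the uniform bound $\mathrm{Var}(Y_{n,1})\le C\gamma_n(t-s)$ valid for \emph{all} $t\ge s\ge0$ — for which the regularity condition \eqref{eq:conditionF} is precisely what rules out a burst of renewals at small times — and $\mathbb{E}[|N_1(\tau)-\mu\tau|^{3}]\le C(\tau^{3/2}+\tau)$, which is where the finite third moment \eqref{eq:m3} is needed. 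The first term has exponent $3/2>1$, and the second is subsumed by it once $t-s$ exceeds the jump scale $(n\gamma_n)^{-1/2}$; Billingsley's moment criterion then yields tightness, and together with the finite-dimensional convergence this gives $\tilde{B}_n\Rightarrow\hat{B}$. (One could alternatively try to deduce the theorem from Whitt's FCLT for space-scaled superposed renewal processes by a further diffusive rescaling in time, but justifying the interchange of the two limits appears no easier than the direct argument.)

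The step I expect to be the main obstacle is tightness, specifically producing a modulus estimate uniform in $n$ despite two features: $\gamma_n\to\infty$ couples the time horizon seen by each summand to the number of summands, and the increments of a single renewal process over disjoint intervals are genuinely dependent. The delicate regime is that of very short increments — lengths $\ll\gamma_n^{-1}$ on the rescaled clock — and the cross-covariance contributions; taming both is exactly what the two technical hypotheses on $F$, the regularity condition \eqref{eq:conditionF} and the finite third moment \eqref{eq:m3}, are for. The finite-dimensional part, by contrast, is largely bookkeeping once the variance and covariance asymptotics for stationary renewal processes are granted.
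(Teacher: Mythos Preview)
Your overall architecture---finite-dimensional convergence plus tightness---matches the paper, but the two implementations differ, and your tightness argument has a genuine gap.

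\medskip

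\textbf{Finite-dimensional distributions.} The paper does not go through Lindeberg--Feller. Instead it uses an exact residual-life (``recess'') decomposition: with $R_j(t)=S_{j,N_j(t)+1}-t$ and $\tilde{R}_n(t)=(n\gamma_n)^{-1/2}\sum_j(R_j(\gamma_n t)-m_e)$, one has
\[
\tilde{B}_n(t)=-\mu\tilde{R}_n(0)+\mu\tilde{R}_n(t)+\tilde{L}_n(t),\qquad
\tilde{L}_n(t)=\frac{1}{\sqrt{n\gamma_n}}\sum_{j=1}^{n}\sum_{k=2}^{N_j(\gamma_n t)+1}(1-\mu\xi_{j,k}).
\]
Stationarity forces every $R_j(\gamma_n t)$ to have law $F_e$, whose variance $\sigma_e^2$ is finite precisely by \eqref{eq:m3}; hence $\mathbb{E}[\tilde{R}_n(t)^2]=\sigma_e^2/\gamma_n\to0$ and both residual terms vanish at each fixed $t$. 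The process $\tilde{L}_n$ is, in distribution, a Donsker partial-sum process evaluated at the random time $\bar{B}_n(t)$, so $\tilde{L}_n\Rightarrow\hat{B}$ by the functional strong law for $\bar{B}_n$ and the random-time-change theorem. This delivers finite-dimensional convergence without any variance/covariance asymptotics or Lindeberg verification. Your Lindeberg--Feller route is viable, but heavier: it requires the renewal-theoretic rate estimates you allude to in (b), whereas the paper's decomposition isolates the entire ``non-Brownian'' part of each renewal process in the residual life and kills it with the extra $\gamma_n^{-1/2}$.

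\medskip

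\textbf{Tightness.} Here your proposal has a real hole. Your Rosenthal bound
\[
\mathbb{E}\big[|\tilde{B}_n(t)-\tilde{B}_n(s)|^{3}\big]\le C(t-s)^{3/2}+C(n\gamma_n)^{-1/2}(t-s)
\]
does \emph{not} feed into a single-increment Kolmogorov--Chentsov criterion, because the second term has exponent $1$ in $t-s$. Your remark that it is ``subsumed once $t-s$ exceeds the jump scale'' does not repair this: the moment criterion must hold uniformly down to $t-s=0$, and for $t-s\ll(n\gamma_n)^{-1/2}$ the bound gives no control. The paper circumvents this by using the \emph{two-increment} criterion of Billingsley (Theorem~13.5, condition~(13.14)): it proves
\[
\mathbb{E}\big[(\tilde{B}_n(s)-\tilde{B}_n(r))^{2}(\tilde{B}_n(t)-\tilde{B}_n(s))^{2}\big]\le c\,(t-r)^{2}
\quad\text{for all }0\le r\le s\le t\text{ and all }n,
\]
by expanding over the iid summands and invoking Whitt's uniform bounds $\mathbb{E}[(\check{N}_1(s)-\check{N}_1(r))^{2}]\le c_1(s-r)$ and $\mathbb{E}[(\check{N}_1(s)-\check{N}_1(r))^{2}(\check{N}_1(t)-\check{N}_1(s))^{2}]\le c_1(t-r)^{2}$---the latter being exactly where \eqref{eq:conditionF} enters. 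The two-increment form is designed to absorb the linear-in-$(t-s)$ contribution: for \emph{both} adjacent increments to be large at a small scale the process must jump in each subinterval, and that product of probabilities supplies the missing power. This is the tool your argument needs; once you swap it in, the rest of your outline goes through.
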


Let us compare Theorem~\ref{theorem:FCLT}
with two other FCLTs. Consider the sequence
of iid stationary renewal processes $\{N_{j}:j\in\mathbb{N}\}$.
By the FCLT for renewal processes, $\{(N_{1}(\ell t)-\ell\mu t)/\sqrt{\ell}:t\geq 0\}$ converges in
distribution to a Brownian motion as $\ell$ goes to infinity; see
Theorem~5.11 in \citet{ChenYao01}. Clearly, the increments of
this time-scaled renewal process become independent of
its history as the scaling factor gets large. \citet{Whitt85}
proved an FCLT for the superposition of stationary renewal processes.
It states that $\{\sum_{j=1}^{n}(N_{j}(t)-\mu t)/\sqrt{n}:t\geq 0\}$
converges in distribution to a zero-mean Gaussian process that has stationary
increments and continuous paths. In this FCLT, the superposition process is
scaled in space only. The covariance function of each stationary renewal
process is retained in the limit Gaussian process, which, in
general, is not a Brownian motion; see Theorem~2 in \citet{Whitt85}.
In our theorem, each superposition process is scaled in both space
and time. Squeezing the time scale erases the dependence of the
increments of $\tilde{B}_{n}$ to its history. The limit of these
space-time scaled superposition processes is thus
a Gaussian process with independent, stationary
increments and continuous paths, which must be a Brownian motion.

In the many-server overloaded regime, all servers of a queue
are nearly always busy. The service completion process is thus
almost identical to a superposition of many renewal processes.
Theorem~\ref{theorem:FCLT} implies that it is possible to approximate
the scaled service completion process by a Brownian motion.
This approximation enables us to explore a simple one-dimensional
diffusion model, which is able to capture the dynamics of a many-server
queue with a general service time distribution, by zooming out our
view in both space and time.

\subsection{Limits in the long patience overloaded regime
\label{subsec:long-patience-limit}}

To formulate the long patience overloaded regime, we fix the number
of servers $n$ and consider a sequence of $\mbox{G}/\mbox{GI}/n+\mbox{M}$
queues indexed by $k\in\mathbb{N}$. All these queues share the
same arrival process, the same service distribution, and thus the same
traffic intensity $\rho>1$. We assume that the mean patience time
in the $k$th queue goes to infinity as $k$ goes large, i.e.,
\begin{equation}
\gamma_{k}\rightarrow \infty\quad\mbox{as }k\rightarrow\infty.
\label{eq:gamma_k}
\end{equation}

Let $E$ be the common arrival process of these queues, which has a
diffusion-scaled version
\[
\tilde{E}_{k}(t) = \frac{1}{\sqrt{n\gamma_{k}}}(E(\gamma_{k}t)-\lambda\gamma_{k}t).
\]
Assume that
\begin{equation}
\tilde{E}_{k}\Rightarrow \hat{E}\quad\mbox{as }k\rightarrow\infty,
\label{eq:Ek}
\end{equation}
where $\hat{E}$ is a drift less Brownian motion with variance $\rho\mu c_{A}^{2}$
and $\hat{E}(0)=0$. Let $X_{k}(t)$ be the number of
customers in the $k$th system at time $t$. Put
\[
\tilde{X}_{k}(t) = \frac{1}{\sqrt{n\gamma_{k}}}(X_{k}(\gamma_{k}t)-n-n\mu(\rho-1)\gamma_{k}).
\]
We assume that there exists a random variable $\hat{X}(0)$ such that
\begin{equation}
\tilde{X}_{k}\Rightarrow \hat{X}(0)\quad\mbox{as }k\rightarrow\infty.
\label{eq:initial_k}
\end{equation}

\begin{theorem}
\label{theorem:long-patience}
Let $\hat{X}$ be the OU process given by \eqref{eq:OU} and $n$ be a fixed
positive integer. Assume that the sequence of $\mbox{G}/\mbox{GI}/n+\mbox{M}$
queues, indexed by $k\in\mathbb{N}$, satisfies \eqref{eq:gamma_k}--\eqref{eq:initial_k} with $\rho>1$.
Then,
\[
\tilde{X}_{k}\Rightarrow\hat{X}\quad\mbox{as }k\rightarrow\infty\mbox{.}
\]
\end{theorem}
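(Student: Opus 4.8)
The plan is to run the same continuous-mapping argument used to prove Theorem~\ref{theorem:many-server} in Section~\ref{sec:Proof-queue}, which here is lighter because the number of servers $n$ is held fixed: the single place where the many-server proof invokes the superposition FCLT (Theorem~\ref{theorem:FCLT}) is replaced by $n$ separate applications of the classical FCLT for one time-scaled renewal process, valid since $\gamma_{k}\to\infty$. First I would record the flow-balance identity for the $k$th queue,
\[
X_{k}(t)=X_{k}(0)+E(t)-D_{k}(t)-L_{k}(t),
\]
where $D_{k}$ counts service completions and $L_{k}$ counts abandonments by time $t$. Because $\rho>1$, on the fluid scale the queue length rests at the strictly positive level $\mu(\rho-1)$ while the threshold $n$ collapses to $0$, so a perturbed-queue argument of the kind used for Theorem~\ref{theorem:many-server} shows that with probability tending to one all $n$ servers stay busy over any fixed horizon after space-time scaling; it therefore suffices to analyze the queue in which every server is always busy. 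For that queue $D_{k}(t)=\sum_{j=1}^{n}N_{j}(t)$, with $\{N_{j}\}$ the iid stationary renewal processes in \eqref{eq:renewal}, and since $\gamma_{k}\to\infty$ the renewal FCLT gives that $t\mapsto\gamma_{k}^{-1/2}(N_{j}(\gamma_{k}t)-\mu\gamma_{k}t)$ converges in distribution to a driftless Brownian motion of variance $\mu c_{S}^{2}$; adding the $n$ independent copies, $t\mapsto(n\gamma_{k})^{-1/2}(D_{k}(\gamma_{k}t)-n\mu\gamma_{k}t)$ converges in distribution to a driftless Brownian motion $\hat{B}$ of variance $\mu c_{S}^{2}$.

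Next I would treat the abandonment term using the exponential patience assumption: $L_{k}(t)-\gamma_{k}^{-1}\int_{0}^{t}(X_{k}(u)-n)^{+}\,\mathrm{d}u$ is a martingale whose predictable quadratic variation is $\gamma_{k}^{-1}\int_{0}^{t}(X_{k}(u)-n)^{+}\,\mathrm{d}u$. Substituting $X_{k}(\gamma_{k}u)-n=n\mu(\rho-1)\gamma_{k}+\sqrt{n\gamma_{k}}\,\tilde{X}_{k}(u)$ and using that the leading term dominates, so the positive part may be dropped, the compensator of the time-scaled process $t\mapsto L_{k}(\gamma_{k}t)$ splits into a fluid part $n\mu(\rho-1)\gamma_{k}t$, which together with the fluid arrival and departure rates makes the order-$\gamma_{k}$ terms cancel since $n\rho\mu-n\mu-n\mu(\rho-1)=0$, and a diffusion-order correction $\sqrt{n\gamma_{k}}\int_{0}^{t}\tilde{X}_{k}(u)\,\mathrm{d}u$ that, after dividing by $\sqrt{n\gamma_{k}}$, produces the mean-reverting drift $-\int_{0}^{t}\tilde{X}_{k}(u)\,\mathrm{d}u$ in \eqref{eq:OU}; the quadratic variation of the space-time scaled abandonment martingale converges to the deterministic function $\mu(\rho-1)t$, so by the martingale FCLT it converges to a driftless Brownian motion of variance $\mu(\rho-1)$, independent of the arrival and service limits. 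Collecting the arrival term $\tilde{E}_{k}\Rightarrow\hat{E}$ of variance $\rho\mu c_{A}^{2}$ from \eqref{eq:Ek}, the service term of variance $\mu c_{S}^{2}$, and the abandonment term of variance $\mu(\rho-1)$, one obtains $\tilde{X}_{k}(t)=\hat{M}_{k}(t)-\int_{0}^{t}\tilde{X}_{k}(u)\,\mathrm{d}u+\varepsilon_{k}(t)$ with $\varepsilon_{k}\Rightarrow0$ and $\hat{M}_{k}\Rightarrow\hat{M}$, a driftless Brownian motion with $\hat{M}(0)=\hat{X}(0)$ and variance $\rho\mu c_{A}^{2}+\mu c_{S}^{2}+\mu(\rho-1)=\mu(\rho c_{A}^{2}+c_{S}^{2}+\rho-1)$, exactly as in \eqref{eq:OU}. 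Since the map carrying a driver in $\mathbb{D}$ to the unique solution of the linear Ornstein--Uhlenbeck integral equation is continuous, the continuous-mapping theorem gives $\tilde{X}_{k}\Rightarrow\hat{X}$.

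The step I expect to demand the most care is the intertwined pair: showing that the original queue and the always-busy perturbed queue coincide on the diffusion scale over compact horizons, and obtaining the a priori $C$-tightness and uniform-on-compacts control of $\{\tilde{X}_{k}\}$ needed before the abandonment compensator can be linearized and the self-referential drift $-\int_{0}^{t}\tilde{X}_{k}(u)\,\mathrm{d}u$ passed to the limit. The standard route is a Gronwall estimate for the scaled system equation, bootstrapped from the tightness of the arrival, service, and abandonment martingale terms established above; with those bounds in hand, dropping the positive part in the compensator and applying the continuous-mapping step are routine. Everything parallels the proof of Theorem~\ref{theorem:many-server}, with the simplification that no superposition-scaling theorem is required.
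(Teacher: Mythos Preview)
Your proposal is correct and follows precisely the route the paper itself sketches: after stating Theorem~\ref{theorem:long-patience} it says only that one repeats the proof of Theorem~\ref{theorem:many-server} with the classical renewal FCLT replacing Theorem~\ref{theorem:FCLT}, which is exactly your plan.

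One point of comparison is worth recording. You treat the abandonment term via the martingale FCLT and then flag as the delicate step an a~priori $C$-tightness/Gronwall bootstrap for $\{\tilde{X}_{k}\}$ needed before the compensator can be linearized. The paper's structure sidesteps this circularity: it represents abandonment as a time-changed unit-rate Poisson process $G$, first proves the fluid limit $\bar{Y}_{k}\Rightarrow\mu(\rho-1)\chi$ for the perturbed system (the analogue of Lemma~\ref{lemma:fluid}), and then writes the diffusion-scaled perturbed dynamics as $\tilde{Y}_{k}=\psi(\tilde{M}_{k})$ with $\psi$ the continuous map of Lemma~\ref{lemma:map}. The driver $\tilde{M}_{k}$ involves $\tilde{E}_{k}$, $\tilde{B}_{k}$, a time-changed $\tilde{G}_{k}$, and a negative-part term $\tilde{\Delta}_{k}$; the fluid limit alone shows $\tilde{\Delta}_{k}\Rightarrow 0$ and that the random time change in $\tilde{G}_{k}$ converges to the identity, so $\tilde{M}_{k}\Rightarrow\hat{M}$ follows directly from the component FCLTs, and $\tilde{Y}_{k}\Rightarrow\hat{X}$ then drops out of the continuous-mapping theorem with no tightness argument for $\tilde{Y}_{k}$ needed in advance. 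Your martingale route reaches the same destination, but the paper's packaging via Lemmas~\ref{lemma:map}--\ref{lemma:tau_n} makes the step you singled out essentially automatic.
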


In the $k$th queue, let $W_{k}(s)$ be the virtual waiting time at $s\geq 0$,
which has a scaled version
\[
\tilde{W}_{k}(s) = \sqrt{n\gamma_{k}}(\gamma_{k}^{-1}W_{k}(\gamma_{k}s)-\log\rho).
\]
\begin{theorem}
\label{theorem:long-patience-virtual}
Under the conditions of Theorem~\ref{theorem:long-patience}, for any
given $s\geq 0$,
\[
\tilde{W}_{k}(s)\Rightarrow\mu^{-1}\hat{Y}^{s}(s+\log\rho)\quad\mbox{as }k\rightarrow\infty,
\]
where $\hat{Y}^{s}$ is the diffusion process defined in
Theorem~\ref{theorem:many-server-virtual}.
\end{theorem}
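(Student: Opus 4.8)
The plan is to prove Theorem~\ref{theorem:long-patience-virtual} in parallel with Theorem~\ref{theorem:many-server-virtual}, reusing that argument essentially intact and altering only the one step in which the two regimes genuinely differ. Recall that the proof of Theorem~\ref{theorem:many-server-virtual} has two stages. The first is a joint functional central limit theorem for the diffusion-scaled primitive processes of an auxiliary perturbed system in which all $n$ servers are busy at every time: the centered, time-scaled arrival process converges to $\hat E$; the centered, time-scaled superposition of the $n$ server-completion renewal processes converges to $\hat B$; and these limits are mutually independent, jointly with $\hat X(0)$ and with the standard Brownian motion $\hat G^s$ that carries the abandonment fluctuations. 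The second stage feeds this input into the FCFS first-passage relation, reads off $\tilde W_n(s)$, derives the linear stochastic differential equation for $\hat Y^s$, and solves it to obtain~\eqref{eq:Ystop}; this stage uses $n\to\infty$ only through the perturbation estimate. In the many-server regime the service-side input of the first stage is Theorem~\ref{theorem:FCLT}. Here it is simpler: since $n$ is fixed and only $\gamma_k\to\infty$, the perturbed $k$th system has completion process $B_n(t)=\sum_{j=1}^n N_j(t)$ with a \emph{fixed} number of iid stationary renewal processes, so for each $j$ the classical renewal FCLT gives that $\{\gamma_k^{-1/2}(N_j(\gamma_k t)-\mu\gamma_k t):t\ge0\}$ converges in distribution to a driftless Brownian motion $\hat B_j$ of variance $\mu c_S^2$; the $N_j$ being independent, these converge jointly, and summing and normalizing by $\sqrt n$ yields $\tilde B_k\Rightarrow n^{-1/2}\sum_{j=1}^n\hat B_j$, again a driftless Brownian motion of variance $\mu c_S^2$ --- exactly the $\hat B$ of Theorem~\ref{theorem:FCLT}. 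Together with assumption~\eqref{eq:Ek} on the arrival side, this supplies precisely the joint FCLT that the rest of the argument needs.

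With this input, Theorem~\ref{theorem:long-patience} gives $\tilde X_k\Rightarrow\hat X$, and the second stage of the proof of Theorem~\ref{theorem:many-server-virtual} transports almost verbatim. Let $Z_k(\tau)$ be the number of customers still waiting at time $\gamma_k s+\tau$ among those who arrived before $\gamma_k s$; the hypothetical infinitely patient customer enters service at $\inf\{\tau:Z_k(\tau)=0\}$, so this is $W_k(\gamma_k s)$. Because patience times are exponential, $Z_k$ decreases through server completions, which while $Z_k>0$ drain it at the aggregate service rate by FCFS, and through abandonments at instantaneous rate $Z_k(\cdot)/\gamma_k$; on the fluid scale $Z_k(\gamma_k(t-s))/(n\gamma_k)\to y^s(t)$ for $t\in[s,s+\log\rho]$ (see~\eqref{eq:ys}), a trajectory decaying from $\mu(\rho-1)$ at $t=s$ to $0$ at $t=s+\log\rho$ with slope $-\mu$ there, whence $\gamma_k^{-1}W_k(\gamma_k s)\to\log\rho$. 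Centering $Z_k$ about $n\gamma_k y^s$ and scaling by $(n\gamma_k)^{-1/2}$, one finds that the server-completion fluctuations contribute $-\hat B(t)$, the arrivals before $s$ contribute $\hat E(s\wedge t)$, the compensated abandonment count contributes $-\hat G^s\bigl(\int_0^t y^s(u)\,\mathrm{d}u\bigr)$ through a martingale FCLT with a random time change whose scaled quadratic variation at time $t$ is $\int_0^t y^s(u)\,\mathrm{d}u$, and the dependence of the abandonment rate on $Z_k$ contributes the mean-reverting term $-\int_0^t\hat Y^s(u)\,\mathrm{d}u$; this reproduces the stated SDE for $\hat Y^s$ on $[0,s+\log\rho]$. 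Variation of constants solves it, and evaluation at $t=s+\log\rho$, where $e^{-t}=e^{-s}/\rho$ and $\hat E(s\wedge u)$ freezes at $u=s$, yields~\eqref{eq:Ystop}. Since $y^s$ crosses $0$ with nonzero slope at $s+\log\rho$, the first-passage map is continuous there, so the continuous mapping theorem gives $\tilde W_k(s)\Rightarrow\mu^{-1}\hat Y^s(s+\log\rho)$.

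The main obstacle is not in the displayed steps but in the two control estimates that license the first-passage reduction, and these are precisely the estimates already carried out for Theorem~\ref{theorem:many-server-virtual}. First, one must show that the original $k$th queue and the all-servers-busy perturbed system are $o_p(\sqrt{n\gamma_k})$-close in the uniform norm on compact diffusion-time intervals; in this regime that follows because $\gamma_k\to\infty$ forces the fluid queue length $n\mu(\rho-1)\gamma_k$ to infinity, so the buffer is nonempty with probability tending to one uniformly on compacts --- no many-server averaging is needed. Second, on the random window $[\gamma_k s,\gamma_k s+W_k(\gamma_k s)]$ one must control, uniformly, the abandonment process thinning the queue ahead of the hypothetical customer; with exponential patience this is a counting martingale whose quadratic variation is read directly off the scaled count $Z_k$, which together with tightness of the scaled window length about $\log\rho$ delivers the required uniform approximation and the random time change defining $\hat G^s$. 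Justifying that time change and the interchange of the weak limit with the first-passage map is the technical heart of the proof; but because both estimates are structurally identical to their counterparts in the proof of Theorem~\ref{theorem:many-server-virtual} --- the only change being that Theorem~\ref{theorem:FCLT} is replaced by the elementary renewal FCLT above --- that argument carries over with only notational adjustments, and Theorem~\ref{theorem:long-patience-virtual} follows.
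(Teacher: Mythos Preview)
Your proposal is correct and matches the paper's own treatment exactly: the paper does not write out a proof of Theorem~\ref{theorem:long-patience-virtual} but simply remarks that one follows the proof of Theorem~\ref{theorem:many-server-virtual} verbatim, replacing the superposition FCLT (Theorem~\ref{theorem:FCLT}) by the ordinary renewal FCLT applied to each of the fixed $n$ server processes --- precisely the substitution you identify. Your write-up in fact supplies more detail than the paper does, and the control estimates you flag (asymptotic equivalence of the queue and the perturbed system via $\bar\tau_k\to\infty$ in probability, and the random-time-change argument for the abandonment martingale) are indeed the same as in Section~\ref{sec:Proof-virtual} with only the scaling index relabeled.
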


Because all queues have the same number of servers, we need the FCLT
for renewal processes, instead of
Theorem~\ref{theorem:FCLT}, in proving Theorems~\ref{theorem:long-patience}
and~\ref{theorem:long-patience-virtual}. With minor modification, one can follow the proofs of
Theorems~\ref{theorem:many-server} and~\ref{theorem:many-server-virtual}
to finish these proofs. We would not include them
in this paper.

\begin{table}[t]
\caption{Performance estimates for an $\mbox{M}/\mbox{GI}/100+\mbox{M}$ queue
with $\mu=1.0$ and $\rho=1.2$; simulation results (with $95\%$ confidence intervals)
are compared with approximate results (in \emph{italics}).}
\centering %
\footnotesize{
\begin{tabular}{l|lllll}
\hline
\multicolumn{1}{c|}{} & \multicolumn{1}{c}{} & \multicolumn{2}{c}{Queue length} & \multicolumn{2}{c}{Virtual waiting time}\T\tabularnewline
\multicolumn{1}{l|}{Patience} & \multicolumn{1}{l}{Abd. fraction} & \multicolumn{1}{l}{Mean} & \multicolumn{1}{l}{Variance} & \multicolumn{1}{l}{Mean} & \multicolumn{1}{l}{Variance}\B\tabularnewline
\hline
\multicolumn{1}{c|}{} & \multicolumn{5}{c}{$\mbox{M}/\mbox{D}/100+\mbox{M}$}\T\B\tabularnewline
\cline{2-6}
$\gamma=1.0$  & $0.1668$  & $20.02$  & $73.11$  & $0.1851$  & $0.005322$\T\tabularnewline
 & $\pm0.000020$  & $\pm0.0034$  & $\pm0.038$  & $\pm0.000028$  & $\pm0.0000030$\tabularnewline
 & $\emph{0.1667}$  & $\emph{20.00}$  & $\emph{70.00}$  & $\emph{0.1823}$  & $\emph{0.005000}$\B\tabularnewline
$\gamma=5.0$  & $0.1667$  & $99.99$  & $364.1$  & $0.9142$  & $0.02639$ \T\tabularnewline
 & $\pm0.000021$  & $\pm0.017$  & $\pm4.3$  & $\pm0.00014$  & $\pm0.00042$\tabularnewline
 & $\emph{0.1667}$  & $\emph{100.0}$  & $\emph{350.0}$  & $\emph{0.9116}$  & $\emph{0.02500}$\B\tabularnewline
$\gamma=10$  & $0.1667$  & $200.0$  & $749.2$  & $1.826$  & $0.05487$ \T\tabularnewline
 & $\pm0.000021$  & $\pm0.035$  & $\pm1.2$  & $\pm0.00030$  & $\pm0.000086$\tabularnewline
 & $\emph{0.1667}$  & $\emph{200.0}$  & $\emph{700.0}$  & $\emph{1.823}$  & $\emph{0.05000}$\B\tabularnewline
\hline
\multicolumn{1}{c|}{} & \multicolumn{5}{c}{$\mbox{M}/\mbox{E}_{2}/100+\mbox{M}$}\T\B\tabularnewline
\cline{2-6}
$\gamma=1.0$  & $0.1672$  & $20.07$  & $97.08$  & $0.1869$  & $0.007799$ \T\tabularnewline
 & $\pm0.000040$  & $\pm0.0062$  & $\pm0.041$  & $\pm0.000055$  & $\pm0.0000033$\tabularnewline
 & $\emph{0.1667}$  & $\emph{20.00}$  & $\emph{95.00}$  & $\emph{0.1823}$  & $\emph{0.007500}$\B\tabularnewline
$\gamma=5.0$  & $0.1666$  & $99.97$  & $481.0$  & $0.9152$  & $0.03812$ \T\tabularnewline
 & $\pm0.000043$  & $\pm0.035$  & $\pm0.63$  & $\pm0.00031$  & $\pm0.000049$\tabularnewline
 & $\emph{0.1667}$  & $\emph{100.0}$  & $\emph{475.0}$  & $\emph{0.9116}$  & $\emph{0.03750}$\B\tabularnewline
$\gamma=10$  & $0.1666$  & $199.9$  & $956.4$  & $1.827$  & $0.07567$ \T\tabularnewline
 & $\pm0.000042$  & $\pm0.066$  & $\pm2.0$  & $\pm0.00058$  & $\pm0.00015$\tabularnewline
 & $\emph{0.1667}$  & $\emph{200.0}$  & $\emph{950.0}$  & $\emph{1.823}$  & $\emph{0.07500}$\B\tabularnewline
\hline
\multicolumn{1}{c|}{} & \multicolumn{5}{c}{$\mbox{M}/\mbox{LN}/100+\mbox{M}$}\T\B\tabularnewline
\cline{2-6}
$\gamma=1.0$  & $0.1677$  & $20.12$  & $115.4$  & $0.1884$  & $0.009753$ \T\tabularnewline
 & $\pm0.000038$  & $\pm0.0053$  & $\pm0.050$  & $\pm0.000050$  & $\pm0.0000043$\tabularnewline
 & $\emph{0.1667}$  & $\emph{20.00}$  & $\emph{146.0}$  & $\emph{0.1823}$  & $\emph{0.01260}$\B\tabularnewline
$\gamma=5.0$  & $0.1666$  & $99.97$  & $670.7$  & $0.9171$  & $0.05719$ \T\tabularnewline
 & $\pm0.000038$  & $\pm0.026$  & $\pm0.70$  & $\pm0.00025$  & $\pm0.000063$\tabularnewline
 & $\emph{0.1667}$  & $\emph{100.0}$  & $\emph{730.0}$  & $\emph{0.9116}$  & $\emph{0.06300}$\tabularnewline
$\gamma=10$  & $0.1666$  & $199.9$  & $1385$  & $1.829$  & $0.1187$ \T\tabularnewline
 & $\pm0.000039$  & $\pm0.053$  & $\pm1.8$  & $\pm0.00050$  & $\pm0.00016$\tabularnewline
 & $\emph{0.1667}$  & $\emph{200.0}$  & $\emph{1460}$  & $\emph{1.823}$  & $\emph{0.1260}$\B\tabularnewline
\hline
\end{tabular}
}
\label{table:measure100}
\end{table}

\begin{table}[t]
\caption{Tail probabilities for queue length and virtual waiting time in an $\mbox{M}/\mbox{GI}/100+\mbox{M}$ queue
with $\mu=1.0$ and $\rho=1.2$; simulation results (with $95\%$ confidence intervals)
are compared with diffusion approximations (in \emph{italics}).}
\centering %
\footnotesize{
\begin{tabular}{l|llllll}
\hline
 & \multicolumn{3}{c}{$\mathbb{P}[\tilde{X}(\infty)>a]$} & \multicolumn{3}{c}{$\mathbb{P}[\tilde{W}(\infty)>a]$}\T\tabularnewline
\multicolumn{1}{c|}{Patience} & \multicolumn{1}{c}{$a=0.5$} & \multicolumn{1}{c}{$a=1.0$} & \multicolumn{1}{c}{$a=2.0$} & \multicolumn{1}{c}{$a=0.5$} & \multicolumn{1}{c}{$a=1.0$} & \multicolumn{1}{c}{$a=2.0$}\B\tabularnewline
\hline
\multicolumn{1}{c|}{} & \multicolumn{6}{c}{$\mbox{M}/\mbox{D}/100+\mbox{M}$}\T\B\tabularnewline
\cline{2-7}
$\gamma=1.0$  & $0.2559$  & $0.1131$  & $0.01140$  & $0.2584$  & $0.09269$  & $0.003869$\T\tabularnewline
 & $\pm0.00014$  & $\pm0.000089$  & $\pm0.000031$  & $\pm0.00014$  & $\pm0.000078$  & $\pm0.000018$\B\tabularnewline
$\gamma=5.0$  & $0.2707$  & $0.1200$ & $0.01120$  & $0.2505$  & $0.08689$  & $0.003138$\T\tabularnewline
 & $\pm0.0013$  & $\pm0.0013$  & $\pm0.00031$  & $\pm0.0019$  & $\pm0.0016$  & $\pm0.00017$\B\tabularnewline
$\gamma=10$  & $0.2840$  & $0.1252$ & $0.01089$  & $0.2539$  & $0.09004$  & $0.003419$\T\tabularnewline
 & $\pm0.00049$  & $\pm0.00029$  & $\pm0.000093$  & $\pm0.00046$  & $\pm0.00023$  & $\pm0.000050$\B\tabularnewline
 & $\emph{0.2750}$  & $\emph{0.1160}$  & $\emph{0.008414}$  & $\emph{0.2398}$  & $\emph{0.07865}$  & $\emph{0.002339}$\T\B\tabularnewline
\hline
\multicolumn{1}{c|}{} & \multicolumn{6}{c}{$\mbox{M}/\mbox{E}_{2}/100+\mbox{M}$}\T\B\tabularnewline
\cline{2-7}
$\gamma=1.0$  & $0.2865$  & $0.1472$  & $0.02314$  & $0.3007$  & $0.1422$  & $0.01596$\T\tabularnewline
 & $\pm0.00023$  & $\pm0.00015$  & $\pm0.000044$  & $\pm0.00023$  & $\pm0.00015$  & $\pm0.000039$\B\tabularnewline
$\gamma=5.0$  & $0.2972$  & $0.1523$ & $0.02261$  & $0.2884$  & $0.1302$  & $0.01215$\T\tabularnewline
 & $\pm0.00056$  & $\pm0.00041$  & $\pm0.00014$  & $\pm0.00055$  & $\pm0.00039$  & $\pm0.000098$\B\tabularnewline
$\gamma=10$  & $0.3057$  & $0.1538$ & $0.02095$  & $0.2859$  & $0.1279$  & $0.01151$\T\tabularnewline
 & $\pm0.00074$  & $\pm0.00059$  & $\pm0.00021$  & $\pm0.00073$  & $\pm0.00054$  & $\pm0.00014$\B\tabularnewline
 & $\emph{0.3040}$  & $\emph{0.1525}$  & $\emph{0.02009}$  & $\emph{0.2819}$  & $\emph{0.1241}$  & $\emph{0.01046}$\T\B\tabularnewline
\hline
\multicolumn{1}{c|}{} & \multicolumn{6}{c}{$\mbox{M}/\mbox{LN}/100+\mbox{M}$}\T\B\tabularnewline
\cline{2-7}
$\gamma=1.0$  & $0.3041$  & $0.1697$  & $0.03416$  & $0.3221$  & $0.1726$  & $0.03005$\T\tabularnewline
 & $\pm0.00018$  & $\pm0.00012$  & $\pm0.000046$  & $\pm0.00019$  & $\pm0.00013$  & $\pm0.000043$\B\tabularnewline
$\gamma=5.0$  & $0.3259$  & $0.1917$ & $0.04421$  & $0.3247$  & $0.1802$  & $0.03442$\T\tabularnewline
 & $\pm0.00036$  & $\pm0.00026$  & $\pm0.00015$  & $\pm0.00036$  & $\pm0.00026$  & $\pm0.00013$\B\tabularnewline
$\gamma=10$  & $0.3362$  & $0.1978$ & $0.04491$  & $0.3260$  & $0.1829$  & $0.03605$\T\tabularnewline
 & $\pm0.00047$  & $\pm0.00040$  & $\pm0.00021$  & $\pm0.00047$  & $\pm0.00035$  & $\pm0.00020$\B\tabularnewline
 & $\emph{0.3395}$  & $\emph{0.2039}$  & $\emph{0.04894}$  & $\emph{0.3280}$  & $\emph{0.1865}$  & $\emph{0.03740}$\T\B\tabularnewline
\hline
\end{tabular}
}
\label{table:tail100}
\end{table}

\section{Numerical examples
\label{sec:Numerical}}

In this section, we examine the approximate formulas obtained from the
diffusion model by simulation. We assume a Poisson arrival process and an exponential
patience time distribution. All numerical examples have the same
traffic intensity $\rho=1.2$. Different service time distributions, all with
mean $1/\mu=1.0$, are tested in the many-server and long patience
overloaded regimes.

In the simulation examples, the service time distribution may be
deterministic, Erlang (with two stages), or log-normal. These three
distributions are denoted by $\mbox{D}$, $\mbox{E}_2$, and $\mbox{LN}$,
respectively. With $c_{S}^{2}=0$ and $0.5$, respectively, the deterministic and
Erlang distributions are used to represent scenarios where service times
have small to moderate variability. It was reported in \citet{BrownETAL05}
that a log-normal distribution provides a good fit for the service time
data from the call center of an Israeli bank. We also test such a
distribution that yields more variable service times. The log-normal
distribution has $c_{S}^{2}=1.52$, which is identical to the value from the data in \citet{BrownETAL05}.
All simulation results are obtained by averaging $30$ independent runs
and in each run, the queue is simulated for $1.0\times 10^{6}$ time units.

\subsection{Examples in the many-server overloaded regime
\label{subsection:many-server}}

Consider an $\mbox{M}/\mbox{GI}/100+\mbox{M}$ queue. With $n=100$, the
customer arrival rate is $\lambda = n\rho\mu = 120$. We evaluate the
performance of this queue with mean patience time $\gamma = 1.0$,
$5.0$, and $10$, respectively.

The estimates of several performance measures, including the abandonment
fraction, the mean and variance of the steady-state queue length, and the
mean and variance of the steady-state virtual waiting time, are listed in
Table~\ref{table:measure100}. We use \eqref{eq:abd-fraction},
\eqref{eq:mean-queue}, \eqref{eq:var-queue}, \eqref{eq:mean-virtual}, and
\eqref{eq:var-virtual} to obtain the approximate results. Formulas
\eqref{eq:abd-fraction}, \eqref{eq:mean-queue}, and \eqref{eq:mean-virtual}
can be obtained from the fluid model proposed by \citet{Whitt06}. This
fluid model, however, cannot be used to estimate variances.

In Table~\ref{table:measure100}, the approximate results of the
abandonment fraction, the mean queue length, and the mean virtual
waiting time agree with the simulation results very well. This is
consistent with the conclusion drawn by \citet{Whitt06}: The fluid
model is able to produce accurate approximations for mean
performance measures in an overloaded queue with many servers.
As the scaling factor in time, the mean patience time has
an influence on the accuracy of the diffusion model.
Theorems~\ref{theorem:many-server} and~\ref{theorem:many-server-virtual}
imply that diffusion approximations become more accurate as the
mean patience time increases. Comparing the variance results in the
table, however, we can tell that an adequate diffusion approximation
may not require a long mean patience time: With a mean patience
time that is comparable to the mean service time, the approximate variances
are satisfactory when the service
times are deterministic or follow an Erlang distribution.
We may explain this observation as follows.
Because the service completion process is close to a superposition
of renewal processes, a Brownian motion is used implicitly in the
diffusion model to approximate its fluctuation (see Section~\ref{sec:Proof-queue}
for more details). This replacement is supported by Theorem~\ref{theorem:FCLT}.
As we discussed in Section~\ref{subsec:many-server-limit}, by squeezing
the time scale, the increments of
the service completion process become less dependent to the history,
so that a Brownian motion can approximate a space-time scaled version of this process. If the variability of service
times is not large, a moderate scaling factor in time could be
sufficient for the Brownian approximation to work well.
Hence, with a deterministic or Erlang service time
distribution, the approximate variances are satisfactory even if $\gamma=1.0$.
A large scaling factor is necessary if the variability of service times is considerable.
When the service time distribution is log-normal with $c_{S}^{2}=1.52$,
the approximate variances are not accurate with $\gamma=1.0$. To get
adequate approximations, the mean patience time should be at least several
times longer than the mean service time.
The approximate variances are satisfactory when $\gamma=5.0$ and~$10$.

\begin{figure}[t]
\centering
\begin{subfigure}[b]{0.45\textwidth}
\includegraphics[width=2.8in]{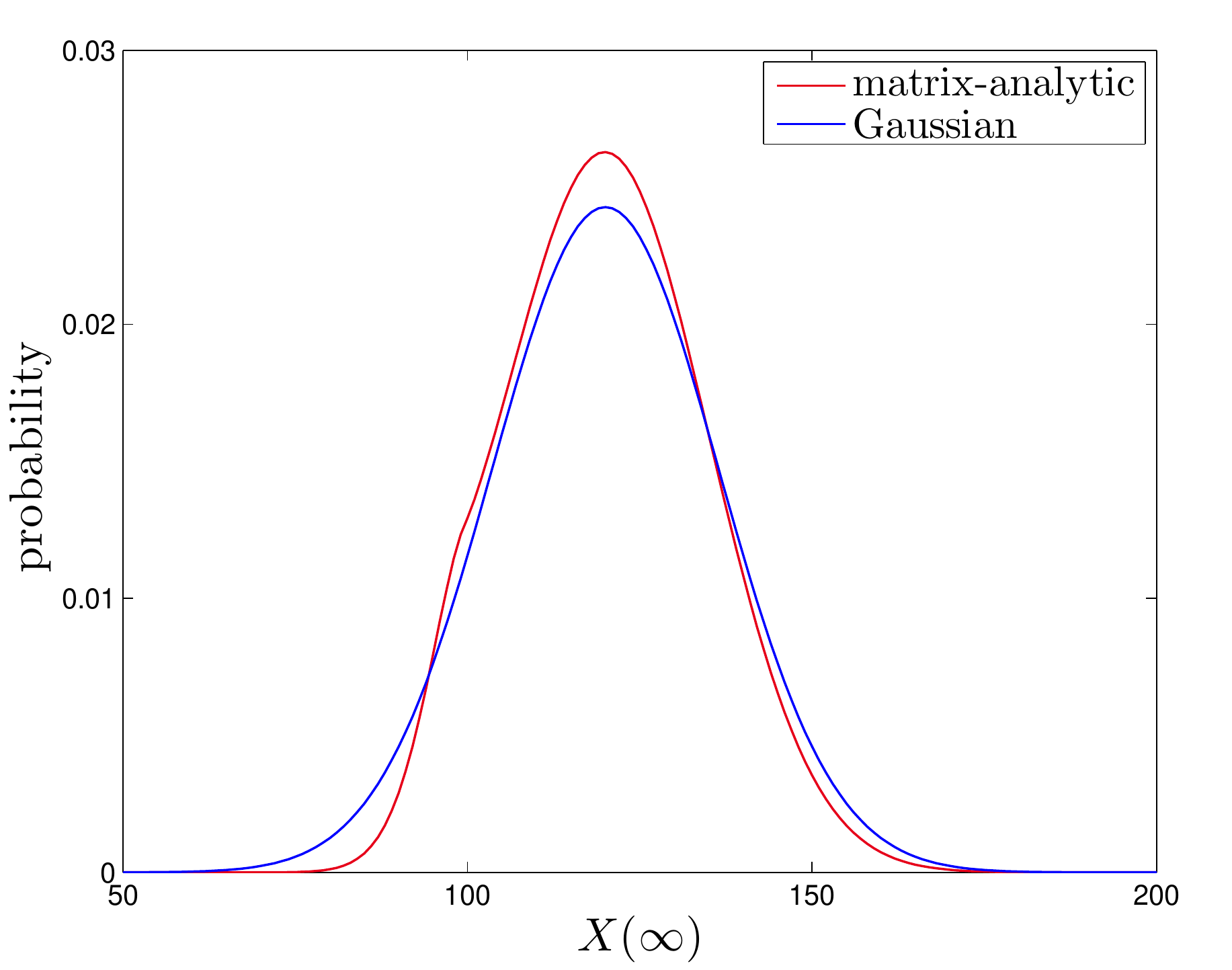}
\caption{$\gamma = 1.0$}
\label{fig:H2-gamma1}
\end{subfigure}
\begin{subfigure}[b]{0.45\textwidth}
\includegraphics[width=2.8in]{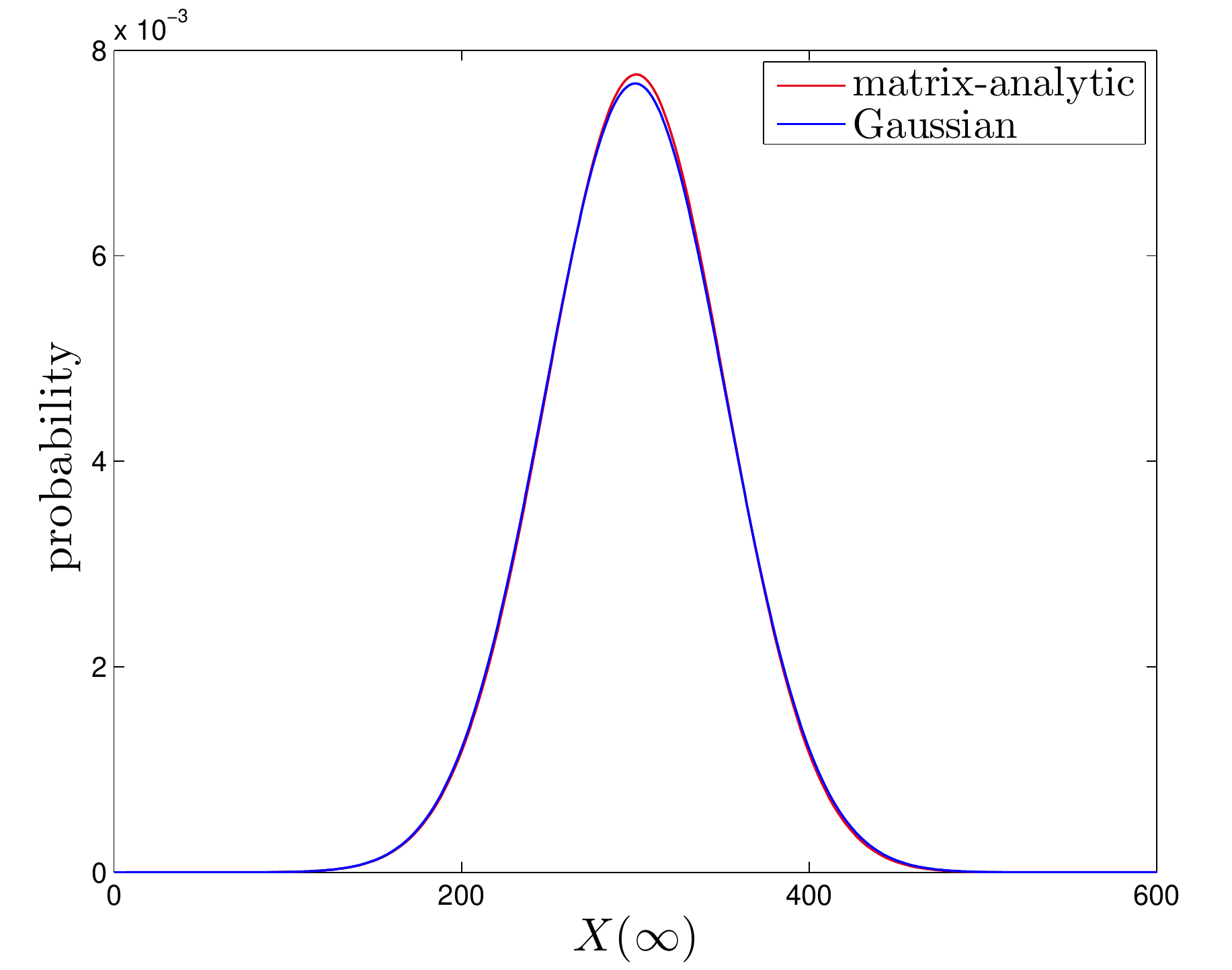}
\caption{$\gamma = 10$}
\label{fig:H2-gamma10}
\end{subfigure}
\caption{The steady-state distribution of the number of customers in an
$\mbox{M}/\mbox{H}_{2}/100+\mbox{M}$ queue with $\mu=1.0$, $\rho=1.2$,
and $c_{S}^{2}=4.0$; the exact distribution by the matrix-analytic method
is compared with the Gaussian approximation from the diffusion model.}
\label{fig:H2}
\end{figure}

To examine the steady-state queue length and virtual waiting time
distributions, we list some tail probabilities in
Table~\ref{table:tail100}. The distributions of the scaled queue
length and virtual waiting time are compared with the Gaussian distributions in
\eqref{eq:steadyQ} and \eqref{eq:steadyW}. The results in this table are
consistent with what we found in Table~\ref{table:measure100}:
With the deterministic or Erlang service time distribution,
the approximate distributions are satisfactory when the mean patience time is comparable
to or longer than the mean service time; when service times follow the
log-normal distribution that has a larger variance, the mean patience time
is required to be at least several times longer than the mean service time
for the Gaussian distributions to be accurate.

To illustrate how the scaled queue length converges to a Gaussian
random variable, let us examine an
$\mbox{M}/\mbox{H}_{2}/100+\mbox{M}$ queue that has an
hyperexponential service time distribution with $1/\mu=1.0$ and $c_{S}^{2}=4.0$.
There are two types of customers in this system. The service times
of either type are iid following an exponential
distribution. The fraction of the first type is $67.41\%$
and its mean service time is $0.1484$, and the fraction of the
second type is $32.59\%$ and its mean service time is $2.761$. The
distribution of the stationary number of customers in this system can be
computed by the matrix-analytic method (see \citet{LatoucheRamaswami99}).
By \eqref{eq:steadyQ}, we can approximate this distribution by
\[
\mathbb{P}[X(\infty)=i] \approx \frac{1}{\sqrt{n\gamma\mu(\rho c_{A}^2+c_{S}^2+\rho-1)/2}}\phi
\bigg(\frac{i-n-n\mu(\rho-1)\gamma}{\sqrt{n\gamma\mu(\rho c_{A}^2+c_{S}^2+\rho-1)/2}}\bigg)
\quad\mbox{for }i\in\mathbb{N}_{0},
\]
where $\phi$ is the standard Gaussian density function.
We compare the distribution produced by the matrix-analytic method
with the approximate distribution in Figure~\ref{fig:H2}. Although
the Gaussian approximation does not capture the exact
distribution with $\gamma=1.0$, it is a good fit with $\gamma=10$.

\begin{table}[t]
\caption{Performance estimates for an $\mbox{M}/\mbox{GI}/5+\mbox{M}$ queue
with $\mu=1.0$ and $\rho=1.2$; simulation results (with $95\%$ confidence intervals)
are compared with approximate results (in \emph{italics}).}
\centering %
\footnotesize{
\begin{tabular}{l|lllll}
\hline
\multicolumn{1}{c|}{} & \multicolumn{1}{c}{} & \multicolumn{2}{c}{Queue length} & \multicolumn{2}{c}{Virtual waiting time}\T\tabularnewline
\multicolumn{1}{l|}{Patience} & \multicolumn{1}{l}{Abd. fraction} & \multicolumn{1}{l}{Mean} & \multicolumn{1}{l}{Variance} & \multicolumn{1}{l}{Mean} & \multicolumn{1}{l}{Variance}\B\tabularnewline
\hline
\multicolumn{1}{c|}{} & \multicolumn{5}{c}{$\mbox{M}/\mbox{D}/5+\mbox{M}$}\T\B\tabularnewline
\cline{2-6}
$\gamma=5.0$  & $0.1814$  & $5.441$  & $14.93$  & $1.041$  & $0.4121$\T\tabularnewline
 & $\pm0.000094$  & $\pm0.0037$  & $\pm0.020$  & $\pm0.00066$  & $\pm0.00050$\tabularnewline
 & $\emph{0.1667}$  & $\emph{5.000}$  & $\emph{17.50}$  & $\emph{0.9116}$  & $\emph{0.5000}$\B\tabularnewline
$\gamma=20$  & $0.1672$  & $20.06$  & $68.95$  & $3.708$  & $1.9649$ \T\tabularnewline
 & $\pm0.00011$  & $\pm0.017$  & $\pm0.18$  & $\pm0.0029$  & $\pm0.0049$\tabularnewline
 & $\emph{0.1667}$  & $\emph{20.00}$  & $\emph{70.00}$  & $\emph{3.646}$  & $\emph{2.000}$\B\tabularnewline
$\gamma=50$  & $0.1666$  & $49.99$  & $175.5$  & $9.164$  & $5.017$ \T\tabularnewline
 & $\pm0.00011$  & $\pm0.043$  & $\pm0.66$  & $\pm0.0074$  & $\pm0.018$\tabularnewline
 & $\emph{0.1667}$  & $\emph{50.00}$  & $\emph{175.0}$  & $\emph{9.116}$  & $\emph{5.000}$\B\tabularnewline
\hline
\multicolumn{1}{c|}{} & \multicolumn{5}{c}{$\mbox{M}/\mbox{E}_{2}/5+\mbox{M}$}\T\B\tabularnewline
\cline{2-6}
$\gamma=5.0$  & $0.1896$  & $5.689$  & $18.66$  & $1.109$  & $0.5939$ \T\tabularnewline
 & $\pm0.00012$  & $\pm0.0044$  & $\pm0.020$  & $\pm0.00080$  & $\pm0.00065$\tabularnewline
 & $\emph{0.1667}$  & $\emph{5.000}$  & $\emph{23.75}$  & $\emph{0.9116}$  & $\emph{0.7500}$\B\tabularnewline
$\gamma=20$  & $0.1687$  & $20.25$  & $90.30$  & $3.766$  & $2.852$ \T\tabularnewline
 & $\pm0.00014$  & $\pm0.021$  & $\pm0.16$  & $\pm0.0036$  & $\pm0.0050$\tabularnewline
 & $\emph{0.1667}$  & $\emph{20.00}$  & $\emph{95.00}$  & $\emph{3.646}$  & $\emph{3.000}$\B\tabularnewline
$\gamma=50$  & $0.1668$  & $50.04$  & $237.0$  & $9.198$  & $7.491$ \T\tabularnewline
 & $\pm0.00013$  & $\pm0.053$  & $\pm0.77$  & $\pm0.0093$  & $\pm0.025$\tabularnewline
 & $\emph{0.1667}$  & $\emph{50.00}$  & $\emph{237.5}$  & $\emph{9.116}$  & $\emph{7.500}$\B\tabularnewline
\hline
\multicolumn{1}{c|}{} & \multicolumn{5}{c}{$\mbox{M}/\mbox{LN}/5+\mbox{M}$}\T\B\tabularnewline
\cline{2-6}
$\gamma=5.0$  & $0.1985$  & $5.953$  & $23.90$  & $1.191$  & $0.8979$ \T\tabularnewline
 & $\pm0.00013$  & $\pm0.0044$  & $\pm0.028$  & $\pm0.00087$  & $\pm0.0014$\tabularnewline
 & $\emph{0.1667}$  & $\emph{5.000}$  & $\emph{36.50}$  & $\emph{0.9116}$  & $\emph{1.260}$\B\tabularnewline
$\gamma=20$  & $0.1716$  & $20.59$  & $126.4$  & $3.875$  & $4.458$ \T\tabularnewline
 & $\pm0.00015$  & $\pm0.019$  & $\pm0.27$  & $\pm0.0035$  & $\pm0.010$\tabularnewline
 & $\emph{0.1667}$  & $\emph{20.00}$  & $\emph{146.0}$  & $\emph{3.646}$  & $\emph{5.040}$\B\tabularnewline
$\gamma=50$  & $0.1670$  & $50.09$  & $354.5$  & $9.258$  & $12.32$ \T\tabularnewline
 & $\pm0.00016$  & $\pm0.052$  & $\pm1.1$  & $\pm0.0095$  & $\pm0.042$\tabularnewline
 & $\emph{0.1667}$  & $\emph{50.00}$  & $\emph{365.0}$  & $\emph{9.116}$  & $\emph{12.60}$\B\tabularnewline
\hline
\end{tabular}
}
\label{table:measure5}
\end{table}

\begin{table}[t]
\caption{Tail probabilities for queue length and virtual waiting time in an $\mbox{M}/\mbox{GI}/5+\mbox{M}$ queue
with $\mu=1.0$ and $\rho=1.2$; simulation results (with $95\%$ confidence intervals)
are compared with diffusion approximations (in \emph{italics}).}
\centering %
\footnotesize{
\begin{tabular}{l|llllll}
\hline
 & \multicolumn{3}{c}{$\mathbb{P}[\tilde{X}(\infty)>a]$} & \multicolumn{3}{c}{$\mathbb{P}[\tilde{W}(\infty)>a]$}\T\tabularnewline
\multicolumn{1}{c|}{Patience} & \multicolumn{1}{c}{$a=0.5$} & \multicolumn{1}{c}{$a=1.0$} & \multicolumn{1}{c}{$a=2.0$} & \multicolumn{1}{c}{$a=0.5$} & \multicolumn{1}{c}{$a=1.0$} & \multicolumn{1}{c}{$a=2.0$}\B\tabularnewline
\hline
\multicolumn{1}{c|}{} & \multicolumn{6}{c}{$\mbox{M}/\mbox{D}/5+\mbox{M}$}\T\B\tabularnewline
\cline{2-7}
$\gamma=5.0$  & $0.2794$  & $0.1073$  & $0.01219$  & $0.2774$  & $0.09800$  & $0.003874$\T\tabularnewline
 & $\pm0.00039$  & $\pm0.00027$  & $\pm0.000090$  & $\pm0.00039$  & $\pm0.00025$  & $\pm0.000041$\B\tabularnewline
$\gamma=20$  & $0.2895$  & $0.1082$ & $0.01012$  & $0.2513$  & $0.08525$  & $0.002887$\T\tabularnewline
 & $\pm0.00080$  & $\pm0.00056$  & $\pm0.00014$  & $\pm0.00077$  & $\pm0.00050$  & $\pm0.000061$\B\tabularnewline
$\gamma=50$  & $0.2811$  & $0.1220$ & $0.01071$  & $0.2464$  & $0.08261$  & $0.002712$\T\tabularnewline
 & $\pm0.0012$  & $\pm0.00095$  & $\pm0.00020$  & $\pm0.0012$  & $\pm0.00077$  & $\pm0.000081$\B\tabularnewline
 & $\emph{0.2750}$  & $\emph{0.1160}$  & $\emph{0.008414}$  & $\emph{0.2398}$  & $\emph{0.07865}$  & $\emph{0.002339}$\T\B\tabularnewline
\hline
\multicolumn{1}{c|}{} & \multicolumn{6}{c}{$\mbox{M}/\mbox{E}_{2}/5+\mbox{M}$}\T\B\tabularnewline
\cline{2-7}
$\gamma=5.0$  & $0.3147$  & $0.1420$  & $0.02319$  & $0.3292$  & $0.1563$  & $0.01841$\T\tabularnewline
 & $\pm0.00041$  & $\pm0.00029$  & $\pm0.00010$  & $\pm0.00043$  & $\pm0.00029$  & $\pm0.000089$\B\tabularnewline
$\gamma=20$  & $0.3200$  & $0.1436$ & $0.02124$  & $0.2979$  & $0.1361$  & $0.01359$\T\tabularnewline
 & $\pm0.00084$  & $\pm0.00055$  & $\pm0.00018$  & $\pm0.00084$  & $\pm0.00058$  & $\pm0.00010$\B\tabularnewline
$\gamma=50$  & $0.3106$  & $0.1579$ & $0.02256$  & $0.2906$  & $0.1308$  & $0.01210$\T\tabularnewline
 & $\pm0.0013$  & $\pm0.00094$  & $\pm0.00026$  & $\pm0.0013$  & $\pm0.00083$  & $\pm0.00018$\B\tabularnewline
 & $\emph{0.3040}$  & $\emph{0.1525}$  & $\emph{0.02009}$  & $\emph{0.2819}$  & $\emph{0.1241}$  & $\emph{0.01046}$\T\B\tabularnewline
\hline
\multicolumn{1}{c|}{} & \multicolumn{6}{c}{$\mbox{M}/\mbox{LN}/5+\mbox{M}$}\T\B\tabularnewline
\cline{2-7}
$\gamma=5.0$  & $0.3432$  & $0.1802$  & $0.04271$  & $0.3645$  & $0.2130$  & $0.05339$\T\tabularnewline
 & $\pm0.00041$  & $\pm0.00032$  & $\pm0.00014$  & $\pm0.00039$  & $\pm0.00031$  & $\pm0.00019$\B\tabularnewline
$\gamma=20$  & $0.3522$  & $0.1905$ & $0.04674$  & $0.3411$  & $0.1983$  & $0.04691$\T\tabularnewline
 & $\pm0.00065$  & $\pm0.00055$  & $\pm0.00029$  & $\pm0.00067$  & $\pm0.00054$  & $\pm0.00029$\B\tabularnewline
$\gamma=50$  & $0.3419$  & $0.2055$ & $0.05100$  & $0.3328$  & $0.1927$  & $0.04372$\T\tabularnewline
 & $\pm0.00095$  & $\pm0.00082$  & $\pm0.00046$  & $\pm0.0011$  & $\pm0.00083$  & $\pm0.00042$\B\tabularnewline
 & $\emph{0.3395}$  & $\emph{0.2039}$  & $\emph{0.04894}$  & $\emph{0.3280}$  & $\emph{0.1865}$  & $\emph{0.03740}$\T\B\tabularnewline
\hline
\end{tabular}
}
\label{table:tail5}
\end{table}

\subsection{Examples in the long patience overloaded regime
\label{subsection:long-patience}}
Let us examine an $\mbox{M}/\mbox{GI}/5+\mbox{M}$ queue with
$\lambda=6.0$ and $\gamma = 5.0$, $20$, and $50$, respectively.
Since the mean patience time is much
longer than the mean service time, this queue is in the long
patience overloaded regime. The corresponding performance
estimates are listed in Tables~\ref{table:measure5} and~\ref{table:tail5}.
As in Section~\ref{subsection:many-server}, we obtain the
approximate results in Table~\ref{table:measure5} by
\eqref{eq:abd-fraction}, \eqref{eq:mean-queue}, \eqref{eq:var-queue},
\eqref{eq:mean-virtual}, and \eqref{eq:var-virtual}, and obtain
the approximate tail probabilities in Table~\ref{table:tail5}
by \eqref{eq:steadyQ} and \eqref{eq:steadyW}.

The diffusion model approximates a queue whose servers are
almost always busy. This condition may not hold if the traffic
intensity is not significantly greater than $1$, the queue has only one or
several servers, and the mean patience time is not very long.
With $\rho=1.2$, $n=5$, and $\gamma=5.0$, the abandonment fraction
of the queue is notably greater than the approximate fraction for
all three service time distributions. This implies that the idling
time of servers is no longer negligible. In this case,
the diffusion model may not produce adequate results.
The idling time of servers can be reduced
by increasing the mean patience time: As customers become more
patient, the queue length grows longer in the overloaded system,
which in turn prevents the servers from idling. In
Tables~\ref{table:measure5} and~\ref{table:tail5}, the approximate
results become much more accurate with $\gamma=20$ and $50$.

When an overloaded queue has one or several servers, the Brownian
approximation used in the diffusion model also requires a large
mean patience time. Note that the service completion process is close to the superposition of
$n$ renewal processes. When $n$ is a small integer, by the FCLT
for renewal processes, a large scaling factor in time is a
prerequisite for the scaled superposition process
to behave like a Brownian motion. In contrast, when $n$ is a large
integer, the scaling in space renders the superposition
process close to Gaussian (see Theorem~2 in \citet{Whitt85}).
Then, as long as the scaling in time can sufficiently
reduce the dependence of the increments to the history, the space-time scaled superposition process will be close
to a Brownian motion. A moderate scaling
factor in time is usually sufficient if the variability of service times is not
large. This contrast can be confirmed by comparing
Tables~\ref{table:measure100} and~\ref{table:tail100}
with Tables~\ref{table:measure5} and~\ref{table:tail5}:
A mean patience time that is several times longer
than the mean service time leads to satisfactory approximate results for a
queue with one hundred servers; in a queue with merely five servers,
however, the mean patience time has to be tens of times longer than
the mean patience time for the diffusion model to work well.

\section{Proof of Theorem~\ref{theorem:many-server}
\label{sec:Proof-queue}}
A sequence of perturbed systems is introduced in Section~\ref{subsec:perturbed}.
In Section~\ref{subsec:equivalence}, we first show that the perturbed systems
are asymptotically equivalent to the original queues, and then prove the
diffusion limit for the perturbed systems.

\subsection{A perturbed system
\label{subsec:perturbed}}

In the $n$th system, the number of customers at time $t$ follows
the dynamical equation
\begin{equation}
X_{n}(t)=X_{n}(0)+E_{n}(t)-A_{n}(t)-D_{n}(t)\quad\mbox{for }t\geq0,
\label{eq:system}
\end{equation}
where $A_{n}(t)$ is the number of customers who have abandoned the
system during $(0,t]$ and $D_{n}(t)$ is the number of service completions
during $(0,t]$. The abandonment process $A_{n}$ can be generated
via the following standard procedure. Let $G$ be a unit-rate Poisson
process that is independent of $X_{n}(0)$, $E_{n}$, and $N_{1},\ldots,N_{n}$
in \eqref{eq:renewal}.
Let $Q_{n}(t)$ be the queue length at time~$t$, i.e.,
\begin{equation}
Q_{n}(t)=(X_{n}(t)-n)^{+}.\label{eq:Q}
\end{equation}
Because the patience time distribution is exponential with mean $\gamma_{n}$,
the instantaneous abandonment rate at~$t$ is $\gamma_{n}^{-1}Q_{n}(t)$.
We may generate the abandonment process $A_{n}$ by
\begin{equation}
A_{n}(t)=G\Big(\gamma_{n}^{-1}\int_{0}^{t}Q_{n}(u)\,\mathrm{d}u\Big).
\label{eq:abandonment}
\end{equation}
For the departure process $D_{n}$, because $\{\xi_{j,k}:k\in\mathbb{N}\}$
is the sequence of service times to be finished by the $j$th server,
the service completion process from this server is identical to $N_{j}$
until the $j$th server begins to idle. Therefore, $D_{n}$ is identical
to the superposition of $N_{1},\ldots,N_{n}$ until the first idle
server appears. Let
\[
\tau_{n}=\inf\{t\geq0:X_{n}(t)<n\},
\]
which is the time that the first idle server appears. Because all
servers are busy at time~$0$, we have $\tau_{n}>0$. The departure
process satisfies
\begin{equation}
D_{n}(t)=B_{n}(t)\quad\mbox{for }0\leq t\leq\tau_{n},\label{eq:DB}
\end{equation}
with $B_{n}$ given by \eqref{eq:B}. As the superposition of $n$
iid stationary renewal processes, $B_{n}$ is more analytically tractable
than $D_{n}$. The equivalence between these two processes up to $\tau_{n}$
allows us to introduce a perturbed system that has simplified dynamics.
This perturbed system is asymptotically equivalent to the
original queue as $n$ goes large.

Consider the system equation \eqref{eq:system}. By \eqref{eq:Q}--\eqref{eq:DB},
\[
X_{n}(t)=X_{n}(0)+E_{n}(t)-G\Big(\gamma_{n}^{-1}\int_{0}^{t}(X_{n}(u)-n)^{+}
\,\mathrm{d}u\Big)-B_{n}(t)\quad\mbox{for }0\leq t\leq\tau_{n}.
\]
From this equation, we introduce a new process $Y_{n}$ by
\begin{equation}
Y_{n}(t) = Y_{n}(0)+E_{n}(t)-G\Big(\gamma_{n}^{-1}\int_{0}^{t}(Y_{n}(u)-n)^{+}
\,\mathrm{d}u\Big)-B_{n}(t)\quad\mbox{for }t\geq0,
\label{eq:perturb}
\end{equation}
where we set $Y_{n}(0)=X_{n}(0)$.
We refer to \eqref{eq:perturb}
as the \emph{perturbed system equation}. Clearly,
\begin{equation}
Y_{n}(t)=X_{n}(t)\quad\mbox{for }0\leq t\leq\tau_{n}
\label{eq:identical}
\end{equation}
on each sample path. Thus, $\tau_{n}$ can be defined alternatively by
\begin{equation}
\tau_{n}=\inf\{t\geq0:Y_{n}(t)<n\}.\label{eq:tau_n}
\end{equation}
The perturbed system can be envisioned as a queue where no server
is allowed to idle. If a server finds the buffer empty upon a service
completion, she begins to serve a customer who has not arrived yet.
In the perturbed system, all
servers are always busy and the departure process from each server
is a stationary renewal process.

\subsection{Limit processes for perturbed systems and asymptotic equivalence
\label{subsec:equivalence}}

We will prove Theorem~\ref{theorem:many-server} by a continuous
mapping approach where two continuous maps are involved. The first
map is used to prove a fluid limit, and the second is for a diffusion limit. The fluid
limit enables us to establish the asymptotic equivalence between the
original queues and the perturbed systems, which implies that
these two sequences of systems have the same diffusion limit.

For any $f\in\mathbb{D}$, let $x$ and $z$ be two functions in $\mathbb{D}$
such that
\begin{equation}
x(t)=f(t)-\int_{0}^{t}x(u)^{+}\,\mathrm{d}u\quad\mbox{and}\quad z(t)=f(t)-\int_{0}^{t}z(u)\,\mathrm{d}u.
\label{eq:map}
\end{equation}
By Theorem~4.1 in \citet{PangETAL07}, each integral equation defines
a continuous map.

\begin{lemma}\label{lemma:map}For each $f\in\mathbb{D}$, there
is a unique $(x,z)\in\mathbb{D}\times\mathbb{D}$ such that \eqref{eq:map}
holds. Let $\varphi:\mathbb{D}\rightarrow\mathbb{D}$ be the function
that maps $f$ to $x$ and $\psi:\mathbb{D}\rightarrow\mathbb{D}$
be the function that maps $f$ to $z$. Then, $\varphi$ and $\psi$
are continuous maps when $\mathbb{D}$ (as both the domain and the
range) is endowed with the $J_{1}$ topology.

\end{lemma}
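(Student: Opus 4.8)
The plan is to reduce both equations in \eqref{eq:map} to the single template
\[
y(t)=f(t)-\int_{0}^{t}g(y(u))\,\mathrm{d}u,\qquad t\geq0,
\]
and to invoke the continuity statement already quoted just before the lemma. In the first equation $g(y)=y^{+}$ and in the second $g(y)=y$; both maps are Lipschitz continuous with constant $1$, so each fits the hypotheses of Theorem~4.1 in \citet{PangETAL07}, which provides, for every $f\in\mathbb{D}$, a unique solution $y\in\mathbb{D}$ of the template and the continuity of the induced map $f\mapsto y$ from $(\mathbb{D},J_{1})$ to $(\mathbb{D},J_{1})$. Taking $g(y)=y^{+}$ produces $\varphi$ and $g(y)=y$ produces $\psi$. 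Since the two equations in \eqref{eq:map} are decoupled, uniqueness of $x$ and of $z$ separately yields uniqueness of the pair $(x,z)=(\varphi(f),\psi(f))$ in $\mathbb{D}\times\mathbb{D}$, so the lemma follows once the (trivial) Lipschitz property of $y\mapsto y^{+}$ and $y\mapsto y$ is noted.

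For the record I would also indicate why the template result holds, since the same estimates recur later in the paper. \emph{Existence and uniqueness.} On each interval $[0,T]$ the operator $(\Theta y)(t)=f(t)-\int_{0}^{t}g(y(u))\,\mathrm{d}u$ maps $\mathbb{D}[0,T]$, which is complete under the uniform norm, into itself, and a routine induction gives $\|\Theta^{m}y_{1}-\Theta^{m}y_{2}\|_{T}\leq(T^{m}/m!)\,\|y_{1}-y_{2}\|_{T}$, so some iterate is a contraction; the contraction principle then yields a unique solution on each $[0,T]$, and a Gronwall bound controls $\|y\|_{T}$ in terms of $\|f\|_{T}$. \emph{$J_{1}$-continuity.} The key point is that the solution has the form $y=f-h$ with $h(t)=\int_{0}^{t}g(y(u))\,\mathrm{d}u$ continuous, so $y$ inherits exactly the jumps of $f$, and the time changes that match $f_{n}$ to $f$ should also match $y_{n}$ to $y$. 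Writing $|y_{n}-y|\leq|f_{n}-f|+|h_{n}-h|$ and $|h_{n}(t)-h(t)|\leq\int_{0}^{t}|y_{n}(u)-y(u)|\,\mathrm{d}u$, Gronwall gives $\|h_{n}-h\|_{T}\leq e^{T}\int_{0}^{T}|f_{n}(u)-f(u)|\,\mathrm{d}u$, and the right-hand side tends to $0$ because $J_{1}$ convergence forces $f_{n}\to f$ at every continuity point of $f$, hence Lebesgue-almost everywhere, together with local uniform boundedness, so dominated convergence applies. Letting $\lambda_{n}$ denote the homeomorphisms realizing $f_{n}\to f$ in $J_{1}$, one then obtains $\|y_{n}\circ\lambda_{n}-y\|_{T}\leq\|f_{n}\circ\lambda_{n}-f\|_{T}+\|h_{n}-h\|_{T}+\|h\circ\lambda_{n}-h\|_{T}\to0$, the last term by uniform continuity of $h$, which is exactly $J_{1}$ convergence $y_{n}\to y$.

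The main obstacle, were one to write out the self-contained argument instead of citing \citet{PangETAL07}, is the $J_{1}$-continuity: one has to be careful that the \emph{same} time changes $\lambda_{n}$ may be used for $y_{n}$ as for $f_{n}$ --- this works only because the correction term $h$ is continuous --- and one must justify upgrading $J_{1}$ convergence of $f_{n}$ to $L^{1}$ convergence on compacts of the integrands so that the Gronwall estimate closes. The remaining ingredients (the Lipschitz bounds, the Picard iteration, and Gronwall's inequality) are routine.
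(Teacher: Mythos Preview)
Your proposal is correct and matches the paper's approach exactly: the paper does not prove this lemma but simply cites Theorem~4.1 in \citet{PangETAL07}, which is precisely what your first paragraph does after noting that $g(y)=y^{+}$ and $g(y)=y$ are Lipschitz. Your additional self-contained sketch of the Picard/Gronwall argument and the $J_{1}$-continuity goes beyond what the paper records, but it is sound and consistent with the cited result.
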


In the fluid scaling, the perturbed system equation \eqref{eq:perturb}
can be written as
\[
\bar{Y}_{n}(t)=\bar{Y}_{n}(0)+\bar{E}_{n}(t)-\bar{G}_{n}\Big(\int_{0}^{t}\bar{Y}_{n}(u)^{+}\,\mathrm{d}u\Big)-\bar{B}_{n}(t)-\int_{0}^{t}\bar{Y}_{n}(u)^{+}\,\mathrm{d}u,
\]
where
\begin{equation}
\bar{E}_{n}(t)=\frac{1}{n\gamma_{n}}E_{n}(\gamma_{n}t),\quad\bar{G}_{n}(t)=\frac{1}{n\gamma_{n}}(G(n\gamma_{n}t)-n\gamma_{n}t),\quad\bar{B}_{n}(t)=\frac{1}{n\gamma_{n}}B_{n}(\gamma_{n}t),\label{eq:EGB-bar}
\end{equation}
and
\begin{equation}
\bar{Y}_{n}(t)=\frac{1}{n\gamma_{n}}(Y_{n}(\gamma_{n}t)-n).\label{eq:Y-bar}
\end{equation}

\begin{lemma}
\label{lemma:fluid}
Under the conditions of Theorem~\ref{theorem:many-server},
\[
\bar{Y}_{n}\Rightarrow\mu(\rho-1)\chi\quad\mbox{as }n\rightarrow\infty.
\]
\end{lemma}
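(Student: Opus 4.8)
The plan is to cast the fluid-scaled perturbed equation as an instance of the integral map $\varphi$ from Lemma~\ref{lemma:map} and apply the continuous mapping theorem. Setting
\[
\bar F_n(t)=\bar Y_n(0)+\bar E_n(t)-\bar G_n\Big(\int_0^t\bar Y_n(u)^+\,\mathrm du\Big)-\bar B_n(t),
\]
the fluid-scaled perturbed system equation displayed above reads $\bar Y_n(t)=\bar F_n(t)-\int_0^t\bar Y_n(u)^+\,\mathrm du$, i.e.\ $\bar Y_n=\varphi(\bar F_n)$. Since $\rho>1$, the constant function $x\equiv\mu(\rho-1)$ solves $x(t)=\mu(\rho-1)(1+t)-\int_0^t x(u)^+\,\mathrm du$, so by the uniqueness in Lemma~\ref{lemma:map} we have $\varphi\big(\mu(\rho-1)(\chi+e)\big)=\mu(\rho-1)\chi$. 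Hence it suffices to prove $\bar F_n\Rightarrow\mu(\rho-1)(\chi+e)$ and then invoke the continuity of $\varphi$.

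For the four terms of $\bar F_n$: by \eqref{eq:lambda_n} and \eqref{eq:initial}, $\bar Y_n(0)=\mu(\rho-1)+(n\gamma_n)^{-1/2}\tilde X_n(0)\Rightarrow\mu(\rho-1)$; by \eqref{eq:E}, $\bar E_n=\rho\mu\,e+(n\gamma_n)^{-1/2}\tilde E_n\Rightarrow\rho\mu\,e$; by Theorem~\ref{theorem:FCLT}, $\bar B_n=\mu\,e+(n\gamma_n)^{-1/2}\tilde B_n\Rightarrow\mu\,e$; and $\bar G_n\Rightarrow0$ by the functional strong law of large numbers for the unit-rate Poisson process $G$, since $n\gamma_n\to\infty$. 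Because $n\gamma_n\to\infty$, each $(n\gamma_n)^{-1/2}$-scaled fluctuation above converges to $0$. All the non-constant limits here are deterministic and continuous, so these convergences hold jointly and uniformly on compact sets, and it only remains to show the composed term $\bar G_n\big(\int_0^\cdot\bar Y_n(u)^+\,\mathrm du\big)$ also tends to $0$.

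That composed term is the one delicate point, because its random time change involves $\bar Y_n$ itself; I would break the circularity with an a priori bound. Since the abandonment term in \eqref{eq:perturb} is nonnegative and $B_n$ is nondecreasing, $Y_n(t)\le Y_n(0)+E_n(t)$, hence $\bar Y_n(u)^+\le\bar Y_n(0)^++\bar E_n(u)$ and, by monotonicity of $\bar E_n$, $\int_0^t\bar Y_n(u)^+\,\mathrm du\le t\big(\bar Y_n(0)^++\bar E_n(t)\big)$ for $t\ge0$. The right-hand side is stochastically bounded on each $[0,T]$, so for any $\varepsilon>0$ there is $K<\infty$ with $\mathbb P\big(\int_0^T\bar Y_n(u)^+\,\mathrm du>K\big)<\varepsilon$ for all large $n$; on the complementary event $\sup_{t\le T}\big|\bar G_n\big(\int_0^t\bar Y_n(u)^+\,\mathrm du\big)\big|\le\sup_{s\le K}|\bar G_n(s)|$, which converges to $0$ almost surely. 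Thus $\bar G_n\big(\int_0^\cdot\bar Y_n(u)^+\,\mathrm du\big)\Rightarrow0$, so $\bar F_n\Rightarrow\mu(\rho-1)\chi+\rho\mu\,e-\mu\,e=\mu(\rho-1)(\chi+e)$, and applying the continuous mapping theorem to $\varphi$ gives $\bar Y_n=\varphi(\bar F_n)\Rightarrow\mu(\rho-1)\chi$. The main obstacle, as noted, is handling this self-referential composition term via the a priori bound; the remaining steps are bookkeeping with the limit theorems already in hand.
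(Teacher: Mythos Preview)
Your proof is correct and follows essentially the same route as the paper: write $\bar Y_n=\varphi(\bar F_n)$, identify the deterministic limit of the driving process as $\mu(\rho-1)(\chi+e)$, and apply the continuous mapping theorem via Lemma~\ref{lemma:map}. The paper handles the self-referential composition term the same way you do, using the a~priori bound $\bar Y_n(t)\le\bar Y_n(0)+\bar E_n(t)$ to get stochastic boundedness of the random time change (its display~\eqref{eq:Y-bar-bound}); the only cosmetic difference is that the paper invokes Proposition~\ref{proposition:FSLLN} directly for $\bar B_n\Rightarrow\mu e$, whereas you obtain it from Theorem~\ref{theorem:FCLT} via $(n\gamma_n)^{-1/2}\tilde B_n\Rightarrow0$, which is equally valid.
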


\begin{proof}
By \eqref{eq:lambda_n} and \eqref{eq:E}, $\bar{E}_{n}\Rightarrow\rho\mu e$
as $n\rightarrow\infty$. Since $Y_{n}(0)=X_{n}(0)$, we have
$\bar{Y}_{n}(0)\Rightarrow(\rho-1)\mu$ as
$n\rightarrow\infty$ by \eqref{eq:initial}. Because $\bar{Y}_{n}(t)\leq\bar{Y}_{n}(0)+\bar{E}_{n}(t)$,
\begin{equation}
\lim_{a\rightarrow\infty}\limsup_{n\rightarrow\infty}
\mathbb{P\Big[}\sup_{0\leq t\leq T}\bar{Y}_{n}(t)>a\Big]=0\quad\mbox{for all }T>0.
\label{eq:Y-bar-bound}
\end{equation}
The functional law of large numbers (see Theorem~5.10 in \citet{ChenYao01})
implies that $\bar{G}_{n}\Rightarrow0$
as $n\rightarrow\infty$, which, along with \eqref{eq:Y-bar-bound}, implies that
\[
\Big\{\bar{G}_{n}\Big(\int_{0}^{t}\bar{Y}_{n}(u)^{+}\,\mathrm{d}u\Big):t\geq0\Big\}\Rightarrow0\quad\mbox{as }n\rightarrow\infty.
\]
Proposition~\ref{proposition:FSLLN} in the appendix states that
$\bar{B}_{n}\Rightarrow\mu e$ as $n\rightarrow\infty$. Put
\[
\bar{M}_{n}(t)=\bar{Y}_{n}(0)+\bar{E}_{n}(t)-\bar{G}_{n}\Big(\int_{0}^{t}\bar{Y}_{n}(u)^{+}\,\mathrm{d}u\Big)-\bar{B}_{n}(t).
\]
We deduce from the previous convergence results that
$\bar{M}_{n}\Rightarrow\mu(\rho-1)(\chi+e)$
as $n\rightarrow\infty$. Note that $\varphi(\mu(\rho-1)(\chi+e))=\mu(\rho-1)\chi$.
Because $\bar{Y}_{n}=\varphi(\bar{M}_{n})$, the fluid limit follows
from Lemma~\ref{lemma:map} and the continuous mapping theorem (see
Theorem~5.2 in \citet{ChenYao01}).
\end{proof}

Let
\begin{equation}
\bar{\tau}_{n}=\gamma_{n}^{-1}\tau_{n}.\label{eq:tau-tilde}
\end{equation}
Then, $\bar{\tau}_{n}$ is the instant when the first idle server
appears in the time-scaled system. By \eqref{eq:identical},
\begin{equation}
\tilde{X}_{n}(t)=\tilde{Y}_{n}(t)\quad\mbox{for }0\leq t\leq\bar{\tau}_{n},
\label{eq:XY}
\end{equation}
where
\[
\tilde{Y}_{n}(t) = \frac{1}{\sqrt{n\gamma_{n}}}(Y_{n}(\gamma_{n}t)-n-n\mu(\rho-1)\gamma_{n}).
\]
The next lemma states that $\bar{\tau}_{n}\rightarrow\infty$ in probability
as $n\rightarrow\infty$, which implies that $\tilde{X}_{n}$ and
$\tilde{Y}_{n}$ are asymptotically equal over any finite time interval.

\begin{lemma}
\label{lemma:tau_n}
Under the conditions of Theorem~\ref{theorem:many-server},
\[
\lim_{n\rightarrow\infty}\mathbb{P}[\bar{\tau}_{n}\leq T]=0\quad\mbox{for all }T>0\mbox{.}
\]
\end{lemma}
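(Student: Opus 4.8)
The plan is to translate the event $\{\bar{\tau}_n\leq T\}$ into a statement about the fluid-scaled perturbed process $\bar{Y}_n$ and then read off the conclusion from Lemma~\ref{lemma:fluid}. By \eqref{eq:tau-tilde} and \eqref{eq:tau_n}, $\bar{\tau}_n\leq T$ exactly when $Y_n(t)<n$ for some $t\leq\gamma_n T$. Since $Y_n$ is an integer-valued, right-continuous pure-jump process (its jumps come only from the counting processes $E_n$, $G$, and $B_n$), whenever $\tau_n\leq\gamma_n T$ one has $\inf_{0\leq t\leq\gamma_n T}Y_n(t)\leq n-1$: if $\tau_n<\gamma_n T$ this is immediate from the definition of $\tau_n$ as an infimum, and if $\tau_n=\gamma_n T$ it follows by taking $t_k\downarrow\gamma_n T$ with $Y_n(t_k)\leq n-1$ and using right-continuity together with integer-valuedness. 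Dividing by $n\gamma_n$ and recalling \eqref{eq:Y-bar}, this gives the inclusion
\[
\{\bar{\tau}_n\leq T\}\subseteq\Big\{\inf_{0\leq s\leq T}\bar{Y}_n(s)<0\Big\}.
\]

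Next I would invoke Lemma~\ref{lemma:fluid}: $\bar{Y}_n\Rightarrow\mu(\rho-1)\chi$ as $n\to\infty$. Because $\rho>1$, the limit is the constant function equal to the strictly positive number $\mu(\rho-1)$, which is continuous; and convergence in the $J_1$ topology to a continuous limit upgrades to uniform convergence on $[0,T]$. Applying the continuous mapping theorem to the functional $f\mapsto\sup_{0\leq s\leq T}|f(s)-\mu(\rho-1)|$, which is continuous at $\mu(\rho-1)\chi$, and using that the limit is deterministic, we obtain
\[
\sup_{0\leq s\leq T}\big|\bar{Y}_n(s)-\mu(\rho-1)\big|\Rightarrow 0\quad\text{as }n\to\infty,
\]
hence this supremum tends to $0$ in probability. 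On the event in the first display, $\bar{Y}_n$ dips below $0$ somewhere on $[0,T]$, which forces $\sup_{0\leq s\leq T}|\bar{Y}_n(s)-\mu(\rho-1)|\geq\mu(\rho-1)$. Combining the two displays,
\[
\mathbb{P}[\bar{\tau}_n\leq T]\leq\mathbb{P}\Big[\sup_{0\leq s\leq T}\big|\bar{Y}_n(s)-\mu(\rho-1)\big|\geq\mu(\rho-1)\Big]\longrightarrow 0,
\]
which is the claim.

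There is no serious obstacle here: Lemma~\ref{lemma:fluid} does all the work, since a strictly overloaded ($\rho>1$) perturbed system maintains a positive fluid-scaled queue length over every finite horizon, so the chance that its discrete queue ever empties before time $\gamma_n T$ vanishes. The only points needing a little care are (i) the passage from the hitting time $\tau_n$ to an event about $\inf_{[0,T]}\bar{Y}_n$, where one uses that $Y_n$ is a pure-jump, integer-valued, right-continuous process so that the relevant infima are attained and strictly below $n$; and (ii) the standard fact that $J_1$-convergence to a continuous (here constant) limit is uniform on compact intervals, which is what licenses replacing weak convergence of $\bar{Y}_n$ by control of $\sup_{[0,T]}|\bar{Y}_n-\mu(\rho-1)|$.
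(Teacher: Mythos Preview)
Your proof is correct and follows essentially the same route as the paper: rewrite $\bar{\tau}_n=\inf\{t\geq0:\bar{Y}_n(t)<0\}$ via \eqref{eq:tau_n}, \eqref{eq:Y-bar}, and \eqref{eq:tau-tilde}, so that $\{\bar{\tau}_n\leq T\}\subseteq\{\inf_{0\leq t\leq T}\bar{Y}_n(t)<0\}$, and then invoke Lemma~\ref{lemma:fluid}. The paper's proof is terser (it records the equality of the two events and says the conclusion follows from Lemma~\ref{lemma:fluid}), whereas you spell out the continuous-mapping step and the right-continuity/integer-valued argument; your case split on $\tau_n<\gamma_nT$ versus $\tau_n=\gamma_nT$ is unnecessary, since right-continuity plus integer values give $Y_n(\tau_n)\leq n-1$ directly whenever $\tau_n<\infty$.
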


\begin{proof}
By \eqref{eq:tau_n}, \eqref{eq:Y-bar}, and \eqref{eq:tau-tilde},
$\bar{\tau}_{n}=\inf\{t\geq0:\bar{Y}_{n}(t)<0\}$, which yields
\[
\mathbb{P}[\bar{\tau}_{n}\leq T]=\mathbb{P}\Big[\inf_{0\leq t\leq T}\bar{Y}_{n}(t)<0\Big].
\]
Then, the assertion follows from Lemma~\ref{lemma:fluid}.
\end{proof}

Put
\begin{align*}
\tilde{A}_{n}(t) & =\frac{1}{\sqrt{n\gamma_{n}}}\Big(G\Big(\gamma_{n}^{-1}\int_{0}^{\gamma_{n}t}(Y_{n}(u)-n)^{+}\,\mathrm{d}u\Big)-\gamma_{n}^{-1}\int_{0}^{\gamma_{n}t}(Y_{n}(u)-n)^{+}\,\mathrm{d}u\Big),\\
\tilde{\Delta}_{n}(t) & =\frac{1}{\sqrt{n\gamma_{n}}}\gamma_{n}^{-1}\int_{0}^{\gamma_{n}t}(Y_{n}(u)-n)^{-}\,\mathrm{d}u.
\end{align*}
With these processes, we can derive a diffusion-scaled version of the
dynamical equation \eqref{eq:perturb},
\[
\tilde{Y}_{n}(t)=\tilde{Y}_{n}(0)+\tilde{E}_{n}(t)-\tilde{A}_{n}(t)-\tilde{\Delta}_{n}(t)-\tilde{B}_{n}(t)-\int_{0}^{t}\tilde{Y}_{n}(u)\,\mathrm{d}u.
\]

\begin{lemma} \label{lemma:Y}Under the conditions of Theorem~\ref{theorem:many-server},
\[
\tilde{Y}_{n}\Rightarrow\hat{X}\quad\mbox{as }n\rightarrow\infty.
\]
\end{lemma}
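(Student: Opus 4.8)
The plan is to write the diffusion-scaled perturbed equation in the form to which the resolvent map $\psi$ of Lemma~\ref{lemma:map} applies. Setting
\[
\tilde M_n=\tilde Y_n(0)\chi+\tilde E_n-\tilde A_n-\tilde\Delta_n-\tilde B_n,
\]
the diffusion-scaled perturbed equation displayed above reads $\tilde Y_n(t)=\tilde M_n(t)-\int_0^t\tilde Y_n(u)\,\mathrm{d}u$, so $\tilde Y_n=\psi(\tilde M_n)$. Since $\hat X=\psi(\hat M)$ is the unique solution of \eqref{eq:OU}, it suffices to prove $\tilde M_n\Rightarrow\hat M$ and then apply the continuous mapping theorem with $\psi$.

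Next I would dispose of the individual terms. By \eqref{eq:initial}, $\tilde Y_n(0)=\tilde X_n(0)\Rightarrow\hat X(0)$; by \eqref{eq:lambda_n} and \eqref{eq:E}, $\tilde E_n\Rightarrow\hat E$; and by Theorem~\ref{theorem:FCLT}, $\tilde B_n\Rightarrow\hat B$. For $\tilde\Delta_n$, note that $(Y_n(u)-n)^-=0$ for $u<\tau_n$, hence $\tilde\Delta_n(t)=0$ whenever $t\le\bar\tau_n$; Lemma~\ref{lemma:tau_n} gives $\bar\tau_n\to\infty$ in probability, so $\sup_{0\le t\le T}|\tilde\Delta_n(t)|\to0$ in probability for every $T$, i.e.\ $\tilde\Delta_n\Rightarrow0$. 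The only term requiring work is $\tilde A_n$. A change of variables together with \eqref{eq:Y-bar} gives $\gamma_n^{-1}\int_0^{\gamma_n t}(Y_n(u)-n)^+\,\mathrm{d}u=n\gamma_n\int_0^t\bar Y_n(s)^+\,\mathrm{d}s$, so that $\tilde A_n=\hat G_n\circ R_n$, where $R_n(t)=\int_0^t\bar Y_n(s)^+\,\mathrm{d}s$ and $\hat G_n(v)=(n\gamma_n)^{-1/2}(G(n\gamma_n v)-n\gamma_n v)$. The FCLT for the unit-rate Poisson process yields $\hat G_n\Rightarrow\hat G$ for a standard Brownian motion $\hat G$, and Lemma~\ref{lemma:fluid} yields $R_n\Rightarrow\mu(\rho-1)e$, a deterministic, continuous, strictly increasing limit (this is where $\rho>1$ enters). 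A random time change then gives $\tilde A_n\Rightarrow\hat G\circ(\mu(\rho-1)e)=:\hat A$, a driftless Brownian motion of variance $\mu(\rho-1)$.

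It remains to combine these into joint convergence with the correct independence structure. Because $G$, $E_n$, $\{N_j\}$, and the initial state are mutually independent, the vector $(\tilde Y_n(0),\tilde E_n,\tilde B_n,\hat G_n)$ has independent coordinates, each convergent, hence converges jointly to $(\hat X(0),\hat E,\hat B,\hat G)$ with these four limits mutually independent. Since $R_n$ has a deterministic limit, it can be appended to this vector without affecting joint convergence; applying, in order, the composition map (continuous at the continuous limiting time change) and the linear map that forms $\tilde M_n$ (whose image $\hat M$ has continuous paths, so addition is continuous there), I obtain $\tilde M_n\Rightarrow\hat M:=\hat X(0)\chi+\hat E-\hat A-\hat B$. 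As the sum of an independent constant and three independent driftless Brownian motions, $\hat M$ is a Brownian motion with $\hat M(0)=\hat X(0)$ and variance $\rho\mu c_A^2+\mu(\rho-1)+\mu c_S^2=\mu(\rho c_A^2+c_S^2+\rho-1)$, precisely the $\hat M$ in \eqref{eq:OU}. Then $\tilde Y_n=\psi(\tilde M_n)\Rightarrow\psi(\hat M)=\hat X$.

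The step I expect to be the main obstacle is the last paragraph: confirming that the five scaled processes converge jointly with $\hat A$ independent of $(\hat X(0),\hat E,\hat B)$, and that the random-time-change map is applied at a point of continuity—both of which go through cleanly only because the cumulative queue-length process $R_n$ has a \emph{deterministic}, continuous, strictly increasing fluid limit. The remaining steps are routine once the easy terms are handled and the identity $\tilde A_n=\hat G_n\circ R_n$ is recorded.
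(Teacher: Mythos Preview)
Your proposal is correct and follows essentially the same route as the paper: write $\tilde Y_n=\psi(\tilde M_n)$, show $\tilde M_n\Rightarrow\hat M$ by combining \eqref{eq:initial}, \eqref{eq:E}, Theorem~\ref{theorem:FCLT}, Lemma~\ref{lemma:tau_n} for $\tilde\Delta_n\Rightarrow0$, and the FCLT for $G$ together with Lemma~\ref{lemma:fluid} for the time change in $\tilde A_n$, then apply $\psi$. The only cosmetic difference is that the paper normalizes the time change so that $\bar\zeta_n\Rightarrow e$ and argues $\tilde A_n-\tilde G_n\Rightarrow0$ via the convergence-together theorem, whereas you keep the limit $\mu(\rho-1)e$ and invoke joint convergence plus continuity of composition and addition; both packagings rest on the same deterministic fluid limit and continuity of the Brownian limit.
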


\begin{proof}
Let
\[
\tilde{M}_{n}(t)=\tilde{Y}_{n}(0)+\tilde{E}_{n}(t)-\tilde{A}_{n}(t)-\tilde{\Delta}_{n}(t)-\tilde{B}_{n}(t).
\]
Because $\tilde{Y}_{n}=\psi(\tilde{M}_{n})$ and $\hat{X}=\psi(\hat{M})$,
Lemma~\ref{lemma:map} and the continuous mapping theorem will lead
to the assertion once we prove $\tilde{M}_{n}\Rightarrow\hat{M}$ as
$n\rightarrow\infty$.

Put
\[
\tilde{G}_{n}(t)=\frac{1}{\sqrt{n\gamma_{n}}}\big(G(n\gamma_{n}(\rho-1)\mu t)-n\gamma_{n}(\rho-1)\mu t\big).
\]
By the FCLT for renewal processes, $\tilde{G}_{n}\Rightarrow\hat{A}$
as $n\rightarrow\infty$ where $\hat{A}$ is a driftless Brownian
motion with variance $(\rho-1)\mu$ and $\hat{A}(0)=0$. Recall that
$Y_{n}(0)=X_{n}(0)$ and $\tilde{Y}_{n}(0),\tilde{E}_{n},\tilde{G}_{n},\tilde{B}_{n}$
are mutually independent. By \eqref{eq:E}, \eqref{eq:initial}, and
Theorem~\ref{theorem:FCLT},
\begin{equation}
\tilde{Y}_{n}(0)+\tilde{E}_{n}-\tilde{G}_{n}-\tilde{B}_{n}\Rightarrow\hat{M}\quad\mbox{as }n\rightarrow\infty.\label{eq:M-hat}
\end{equation}
Put
\[
\bar{\zeta}_{n}(t)=\frac{1}{(\rho-1)\mu}\int_{0}^{t}\bar{Y}_{n}(u)^{+}\,\mathrm{d}u.
\]
Then, $\tilde{A}_{n}=\tilde{G}_{n}\circ\bar{\zeta}_{n}$. By Lemma~\ref{lemma:fluid},
$\bar{\zeta}_{n}\Rightarrow e$ as $n\rightarrow\infty$. Because
$\tilde{G}_{n}\Rightarrow\hat{A}$ and $\hat{A}$ has continuous paths
almost surely, it follows that
\begin{equation}
\tilde{A}_{n}-\tilde{G}_{n}\Rightarrow0\quad\mbox{as }n\rightarrow\infty.\label{eq:AG}
\end{equation}
Moreover,
\[
\mathbb{P}\Big[\sup_{0\leq t\leq T}\tilde{\Delta}_{n}(t)>0\Big]\leq\mathbb{P}\Big[\inf_{0\leq t\leq T}Y_{n}(\gamma_{n}t)<n\Big]=\mathbb{P}[\bar{\tau}_{n}\leq T]\quad\mbox{for all }T>0.
\]
Then, Lemma~\ref{lemma:tau_n} implies that
\begin{equation}
\tilde{\Delta}_{n}\Rightarrow0\quad\mbox{as }n\rightarrow\infty.\label{eq:Delta}
\end{equation}
It follows from \eqref{eq:M-hat}--\eqref{eq:Delta}
and the convergence-together theorem (see Theorem~5.4 in \citet{ChenYao01}) that
$\tilde{M}_{n}\Rightarrow\hat{M}$ as $n\rightarrow\infty$.

\end{proof}

\begin{proof}[Proof of Theorem~\ref{theorem:many-server}.]By
\eqref{eq:XY},
\[
\mathbb{P}\Big[\sup_{0\leq t\leq T}|\tilde{X}_{n}(t)-\tilde{Y}_{n}(t)|>0\Big]\leq\mathbb{P}[\bar{\tau}_{n}\leq T]\quad\mbox{for all }T>0.
\]
Lemma~\ref{lemma:tau_n} implies that $\tilde{X}_{n}-\tilde{Y}_{n}\Rightarrow0$
as $n\rightarrow\infty$. Then, the theorem follows from Lemma~\ref{lemma:Y}
and the convergence-together theorem.
\end{proof}

\section{Proof of Theorem~\ref{theorem:many-server-virtual}}
\label{sec:Proof-virtual}
The proof of Theorem~\ref{theorem:many-server-virtual} also relies
on the analysis of perturbed systems. In Section~\ref{sec:perturbed-stop},
using a perturbed system that has a stopped arrival process, we
introduce an asymptotically equivalent representation for a virtual
waiting time in the original queue. In Section~\ref{sec:limit-virtual},
we establish an asymptotic relationship between the virtual waiting time
and the queue length at a certain time in the perturbed system with
arrival stopping. We prove Theorem~\ref{theorem:many-server-virtual}
by using a diffusion limit for the queue length processes in the
perturbed systems.

\subsection{A perturbed system with a stopped arrival process
\label{sec:perturbed-stop}}

Let $s\geq0$ be a fixed number. Consider the virtual waiting time
at $s$ in the original queue. Because
the queue and its perturbed system follow the same dynamics over
$[0,\tau_{n}]$, the asymptotic equivalence proved in Lemma~\ref{lemma:tau_n}
implies an identical limit for the virtual waiting times in both
systems. We can thus explore a sequence of perturbed systems to obtain
this limit. We follow the approach adopted by \citet{TalrejaWhitt09},
exploiting a sequence of systems with stopped arrival processes.

Suppose that in the queue, the arrival process is ``turned off''
at time~$s$, i.e., all customers who arrive after $s$ are rejected.
For each $t\geq0,$ let $X_{n}^{s}(t)$ be the number of customers
at $t$. Then, $W_{n}(s)$ is the amount of time from $s$ until an
idle server appears, i.e.,
\begin{equation}
W_{n}(s)=\inf\{u\geq0:X_{n}^{s}(s+u)<n\}.\label{eq:W}
\end{equation}
In such a system with the arrival process stopping at $s$, the number
of customers at $t$ is given by
\begin{equation}
X_{n}^{s}(t)=X_{n}(0)+E_{n}^{s}(t)-A_{n}^{s}(t)-D_{n}^{s}(t),\label{eq:two-parameter-dynamics}
\end{equation}
where $E_{n}^{s}(t)=E_{n}(s\wedge t)$, $A_{n}^{s}(t)$ is the number
of abandonments by $t$, and $D_{n}^{s}(t)$ is the number of service
completions by $t$. Let $G^{s}$ be a unit-rate Poisson process that
is independent of $X_{n}(0),$ $E_{n}$, and $N_{1},\ldots,N_{n}$.
We may generate the abandonment process $A_{n}^{s}$ by
\[
A_{n}^{s}(t)=G^{s}\Big(\gamma_{n}^{-1}\int_{0}^{t}(X_{n}^{s}(u)-n)^{+}\,\mathrm{d}u\Big)\quad\mbox{for }t\geq0.
\]
(We only consider the case that $s$ is fixed, so that $G^{s}$
is allowed to change with $s$.) Because $D_{n}^{s}(t)=B_{n}(t)$ for
$0\leq s\leq\tau_{n}$ and $0\leq t\leq s+W_{n}(s)$,
the dynamical equation \eqref{eq:two-parameter-dynamics} can be written as
\[
X_{n}^{s}(t)=X_{n}(0)+E_{n}^{s}(t)-G^{s}\Big(\gamma_{n}^{-1}\int_{0}^{t}(X_{n}^{s}(u)-n)^{+}\,\mathrm{d}u\Big)-B_{n}(t)
\]
for $0\leq s\leq\tau_{n}$ and $0\leq t\leq s+W_{n}(s)$. By this
equation, we can define a process $Y_{n}^{s}$ by
\begin{equation}
Y_{n}^{s}(t)=X_{n}(0)+E_{n}^{s}(t)-G^{s}\Big(\gamma_{n}^{-1}\int_{0}^{t}(Y_{n}^{s}(u)-n)^{+}\,\mathrm{d}u\Big)-B_{n}(t)\quad\mbox{for }t\geq0.\label{eq:stopped-perturbed}
\end{equation}
Equation~\eqref{eq:stopped-perturbed} is the dynamical equation
for the $n$th perturbed system with the arrival process stopping
at $s$. Clearly,
\begin{equation}
Y_{n}^{s}(t)=X_{n}^{s}(t)\quad\mbox{for }0\leq s\leq\tau_{n}\mbox{ and }0\leq t\leq s+W_{n}(s).\label{eq:stopped-XY}
\end{equation}
Let
\begin{equation}
V_{n}(s)=\inf\{u\geq0:Y_{n}^{s}(s+u)<n\}\quad\mbox{for }s\geq0.\label{eq:Vn}
\end{equation}
Then, by \eqref{eq:W} and \eqref{eq:stopped-XY},
\begin{equation}
V_{n}(s)=W_{n}(s)\quad\mbox{for }0\leq s\leq\tau_{n}.
\label{eq:VW_equivalence}
\end{equation}

\subsection{Limit processes for perturbed systems with arrival stopping
\label{sec:limit-virtual}}

Following a continuous mapping approach, we first prove
a fluid limit for the perturbed systems with arrival stopping. Using
\eqref{eq:EGB-bar}, we can derive a fluid-scaled version of \eqref{eq:stopped-perturbed},
given by
\begin{equation}
\bar{Y}_{n}^{s}(t)=\bar{Y}_{n}(0)+\bar{E}_{n}^{s}(t)-\bar{G}_{n}^{s}\Big(\int_{0}^{t}\bar{Y}_{n}^{s}(u)^{+}\,\mathrm{d}u\Big)-\bar{B}_{n}(t)-\int_{0}^{t}\bar{Y}_{n}^{s}(u)^{+}\,\mathrm{d}u,\label{eq:stopped-fluid}
\end{equation}
where
\begin{equation}
\bar{Y}_{n}^{s}(t)=\frac{1}{n\gamma_{n}}(Y_{n}^{\gamma_{n}s}(\gamma_{n}t)-n),\quad\bar{E}_{n}^{s}(t)=\frac{1}{n\gamma_{n}}E_{n}^{\gamma_{n}s}(\gamma_{n}t),\quad\bar{G}_{n}^{s}(t)=\frac{1}{n\gamma_{n}}(G_{n}^{\gamma_{n}s}(n\gamma_{n}t)-n\gamma_{n}t).\label{eq:fluid-t}
\end{equation}

\begin{lemma}\label{Lemma:Yt-bar}Under the conditions of Theorem~\ref{theorem:many-server-virtual},
for all $s\geq0$,
\[
\bar{Y}_{n}^{s}\Rightarrow y^{s}\quad\mbox{as }n\rightarrow\infty,
\]
where $y^{s}$ is the function given by \eqref{eq:ys}.
\end{lemma}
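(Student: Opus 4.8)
The plan is to mimic the continuous-mapping argument used for Lemma~\ref{lemma:fluid}, the only new feature being that the stopped arrival process contributes the capped identity $e^s$ in place of $e$. First I would identify the fluid limits of the driving terms in \eqref{eq:stopped-fluid}. Since $Y_n^{\gamma_n s}(0)=X_n(0)$, assumption~\eqref{eq:initial} gives $\bar Y_n(0)\Rightarrow(\rho-1)\mu\chi$; from \eqref{eq:lambda_n}, \eqref{eq:E}, and $\bar E_n^s(t)=\bar E_n(s\wedge t)$, together with continuity of $f\mapsto f\circ e^s$ at continuous limit points, one gets $\bar E_n^s\Rightarrow\rho\mu e^s$; Proposition~\ref{proposition:FSLLN} gives $\bar B_n\Rightarrow\mu e$. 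For the centered Poisson term, the pathwise bound $\bar Y_n^s(t)\le\bar Y_n(0)+\bar E_n^s(t)\le\bar Y_n(0)+\bar E_n(t)$ yields the stochastic-boundedness estimate \eqref{eq:Y-bar-bound} with $\bar Y_n$ replaced by $\bar Y_n^s$, and, combined with $\bar G_n^s\Rightarrow0$ from the functional law of large numbers for the Poisson process, the random-time-change argument of Lemma~\ref{lemma:fluid} gives $\bar G_n^s\bigl(\int_0^\cdot\bar Y_n^s(u)^+\,\mathrm{d}u\bigr)\Rightarrow0$.

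Next I would set $\bar M_n^s=\bar Y_n(0)+\bar E_n^s-\bar G_n^s\bigl(\int_0^\cdot\bar Y_n^s(u)^+\,\mathrm{d}u\bigr)-\bar B_n$, so that $\bar Y_n^s=\varphi(\bar M_n^s)$ with $\varphi$ the continuous map of Lemma~\ref{lemma:map}. Because all four limits above are deterministic, marginal convergence upgrades to joint convergence, and the convergence-together theorem gives $\bar M_n^s\Rightarrow m^s$ with $m^s=(\rho-1)\mu\chi+\rho\mu e^s-\mu e$; the continuous mapping theorem then gives $\bar Y_n^s\Rightarrow\varphi(m^s)$.

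It then remains to verify that $\varphi(m^s)=y^s$, i.e., that the piecewise function $y^s$ of \eqref{eq:ys} is the (necessarily unique) solution of $x(t)=(\rho-1)\mu+\rho\mu(s\wedge t)-\mu t-\int_0^t x(u)^+\,\mathrm{d}u$. Using $\rho>1$ one checks $y^s>0$ on $[0,s+\log\rho)$ and $y^s\le0$ on $[s+\log\rho,\infty)$, so $y^s(u)^+$ equals $y^s(u)$ on the first interval and $0$ on the second; substituting the three pieces of \eqref{eq:ys} and evaluating the elementary integrals --- notably $\int_s^t(\rho e^{s-u}-1)\mu\,\mathrm{d}u$ on the middle piece --- confirms the identity on each of the three ranges, the matching at $t=s$ and at $t=s+\log\rho$ being built into the piecewise definition.

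Almost all of this is copied from the proof of Lemma~\ref{lemma:fluid}; the genuinely new ingredients are the appearance of $e^s$ in the arrival term and the closed-form verification in the last step. I expect that last verification --- correctly integrating the piecewise-exponential $y^s$ and confirming continuity across the phase boundaries --- to require the most care, although it is a routine computation once the sign of $y^s$ on each subinterval is settled.
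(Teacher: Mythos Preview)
Your proposal is correct and follows essentially the same route as the paper: define $\bar M_n^s$, show its four ingredients converge (initial condition, stopped arrivals to $\rho\mu e^s$, $\bar B_n$ to $\mu e$, and the centered Poisson term to $0$ via the bound $\bar Y_n^s\le\bar Y_n(0)+\bar E_n$), then apply the continuous map $\varphi$ of Lemma~\ref{lemma:map}. The paper's proof is terser---it simply asserts $y^s=\varphi(\mu(\rho e^s-e+(\rho-1)\chi))$ without the piecewise verification you sketch---so your additional check of the ODE on each subinterval is extra detail but not a different idea.
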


\begin{proof}
Write
\[
\bar{M}_{n}^{s}(t)=\bar{Y}_{n}(0)+\bar{E}_{n}^{s}(t)-\bar{G}_{n}^{s}\Big(\int_{0}^{t}\bar{Y}_{n}^{s}(u)^{+}\,\mathrm{d}u\Big)-\bar{B}_{n}(t).
\]
Following the proof of Lemma~\ref{lemma:fluid}, we obtain $\bar{Y}_{n}(0)\Rightarrow(\rho-1)\mu$,
$\bar{E}_{n}^{s}\Rightarrow\rho\mu e^{s}$, $\bar{B}_{n}\Rightarrow\mu e$,
and
\[
\Big\{\bar{G}_{n}^{s}\Big(\int_{0}^{t}\bar{Y}_{n}^{s}(u)^{+}\,\mathrm{d}u\Big):t\geq0\Big\}\Rightarrow0\quad\mbox{as }n\rightarrow\infty.
\]
Then, $\bar{M}_{n}^{s}\Rightarrow\mu(\rho e^{s}-e+(\rho-1)\chi)$
as $n\rightarrow\infty$. Because $y^{s}=\varphi(\mu(\rho e^{s}-e+(\rho-1)\chi))$
and $\bar{Y}_{n}^{s}=\varphi(\bar{M}_{n}^{s})$, the fluid limit follows
from Lemma~\ref{lemma:map} and the continuous mapping theorem.

\end{proof}

Let
\[
\bar{V}_{n}(s)=\gamma_{n}^{-1}V_{n}(\gamma_{n}s),
\]
which is the virtual waiting time in the time-scaled perturbed system. By \eqref{eq:Vn}
and \eqref{eq:fluid-t},
\begin{equation}
\bar{V}_{n}(s)=\inf\{u\geq0:\bar{Y}_{n}^{s}(s+u)<0\}.\label{eq:V-bar}
\end{equation}

\begin{lemma}\label{lemma:Vn}Under the conditions of Theorem~\ref{theorem:many-server-virtual},
for all $s\geq0$,
\[
\bar{V}_{n}(s)\Rightarrow\log\rho\quad\mbox{as }n\rightarrow\infty.
\]

\end{lemma}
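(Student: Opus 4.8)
The plan is to read off the claim from the fluid limit $\bar Y_n^s \Rightarrow y^s$ in Lemma~\ref{Lemma:Yt-bar} together with the hitting-time representation \eqref{eq:V-bar}, via a continuity argument for first-passage times. The key observation is that, from the explicit formula \eqref{eq:ys}, the deterministic fluid trajectory $y^s$ equals $(\rho-1)\mu>0$ on $[0,s)$, then decreases on $[s,s+\log\rho)$, reaching $0$ exactly at $t=s+\log\rho$, and becomes strictly negative for $t>s+\log\rho$ (indeed $y^s(t)=-\mu(t-s-\log\rho)$ there). Hence the fluid analogue of \eqref{eq:V-bar} is $\inf\{u\ge 0: y^s(s+u)<0\}=\log\rho$, and moreover $y^s$ crosses level $0$ cleanly with strictly negative slope, which is exactly the regularity needed for the first-passage map to be continuous at $y^s$.

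First I would set up the first-passage functional $\eta(g)=\inf\{u\ge 0: g(s+u)<0\}$ on $\mathbb{D}$ and argue that it is continuous at any $g$ that hits $0$ and then stays strictly below $0$ on a neighborhood to the right of its first zero after $s$ — a standard fact for the $J_1$ topology (see, e.g., the treatment of first-passage times in \citet{Whitt02} or the argument in \citet{TalrejaWhitt09}). Because $y^s$ has this property at the crossing time $s+\log\rho$, the continuous mapping theorem applied to $\bar Y_n^s \Rightarrow y^s$ yields $\bar V_n(s)=\eta(\bar Y_n^s)\Rightarrow \eta(y^s)=\log\rho$. Equivalently, one can avoid invoking a packaged continuity result and argue directly: for any $\varepsilon\in(0,\log\rho)$, the event $\{|\bar V_n(s)-\log\rho|>\varepsilon\}$ is contained in the event that $\bar Y_n^s$ deviates from $y^s$ by more than $\delta:=\tfrac12\min\{y^s(s+\log\rho-\varepsilon),\,\mu\varepsilon\}>0$ in the sup norm over $[0,s+\log\rho+\varepsilon]$; the latter probability vanishes by Lemma~\ref{Lemma:Yt-bar} (convergence in $J_1$ to a continuous limit upgrades to uniform convergence on compacts).

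The main obstacle is the usual subtlety with first-passage-time continuity: the functional $\eta$ is \emph{not} continuous at a function that merely touches $0$ without crossing, or that stays flat at $0$. So the argument genuinely uses the specific shape of $y^s$ from \eqref{eq:ys}, namely that immediately after the hitting instant the limit has a strictly negative slope, so the zero set of $y^s$ restricted to $[s,\infty)$ is the single point $s+\log\rho$. I would make this quantitative exactly as above, choosing $\delta$ so that a $\delta$-sup-norm ball around $y^s$ contains only functions whose first down-crossing after $s$ lies within $\varepsilon$ of $\log\rho$; on the one side one uses that $y^s$ is bounded away from $0$ on $[s,s+\log\rho-\varepsilon]$, and on the other side that $y^s(s+\log\rho+\varepsilon)\le -\mu\varepsilon<0$. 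A minor point to check is that $\bar Y_n^s(0)=\bar Y_n(0)\Rightarrow(\rho-1)\mu>0$, so with probability tending to one $\bar V_n(s)$ is well defined and the infimum in \eqref{eq:V-bar} is not prematurely triggered near $u=0$; this is immediate from \eqref{eq:initial} as in the proof of Lemma~\ref{lemma:fluid}.
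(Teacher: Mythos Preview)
Your proposal is correct and follows the same high-level route as the paper: deduce the fluid limit of $\bar V_n(s)$ from $\bar Y_n^s\Rightarrow y^s$ together with the clean down-crossing of $y^s$ at $t=s+\log\rho$. The only difference is in execution. You rely on uniform convergence on $[0,s+\log\rho+\varepsilon]$ (valid since the limit is continuous) and the fact that $y^s$ is bounded away from $0$ on $[0,s+\log\rho-\varepsilon]$, which forces any sup-norm-close path to have its first down-crossing near $\log\rho$. The paper instead notices that $\bar Y_n^s(t)$ is \emph{nonincreasing} for $t\ge s$ (arrivals have been stopped, so $Y_n^{\gamma_n s}$ can only decrease after $\gamma_n s$); this monotonicity means one needs the fluid limit only at the two single times $s+\log\rho\pm\delta$, not uniformly on an interval. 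Both arguments work; the paper's buys a one-line proof at the cost of observing the monotonicity, while yours is the standard first-passage continuity argument that would also apply if the prelimit were not monotone.
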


\begin{proof}
Because $y^{s}(s+\log\rho-\delta)>0$ and $y^{s}(s+\log\rho+\delta)<0$ for
$\delta>0$, Lemma~\ref{Lemma:Yt-bar} implies that
\[
\lim_{n\rightarrow\infty}\mathbb{P}[\bar{Y}_{n}^{s}(s+\log\rho-\delta)>0]=1\quad\mbox{and}
\quad\lim_{n\rightarrow\infty}\mathbb{P}[\bar{Y}_{n}^{s}(s+\log\rho+\delta)<0]=1.
\]
Using \eqref{eq:V-bar} and the fact that $\bar{Y}_{n}^{s}(t)$ is nonincreasing for $t\geq s$,
we obtain
\[
\lim_{n\rightarrow\infty}\mathbb{P}[\log\rho-\delta\leq\bar{V}_{n}(s)\leq\log\rho+\delta]=1,
\]
which completes the proof.

\end{proof}

Having established the convergence results in the fluid scaling, let
us turn to diffusion-scaled processes. For $t\geq0$, put
\[
\tilde{M}_{n}^{s}(t)=\tilde{Y}_{n}(0)+\tilde{E}_{n}^{s}(t)-\tilde{G}_{n}^{s}\Big(\int_{0}^{t}\bar{Y}_{n}^{s}(u)^{+}\,\mathrm{d}u\Big)-\tilde{B}_{n}(t),\label{eq:Ms-tilde}
\]
where
\[
\tilde{E}_{n}^{s}(t)=\tilde{E}_{n}(s\wedge t)\quad\mbox{and}\quad\tilde{G}{}_{n}^{s}(t)=\frac{1}{\sqrt{n\gamma_{n}}}(G_{n}^{\gamma_{n}s}(n\gamma_{n}t)-n\gamma_{n}t).
\]
\begin{lemma}\label{lemma:Ms-tilde} Let $\hat{E}^{s}(t)=\hat{E}(s\wedge t)$
and $\hat{G}^{s}$ be a standard Brownian motion independent of $\hat{X}(0)$,
$\hat{E}^{s}$, and $\hat{B}$. Under the conditions of Theorem~\ref{theorem:many-server-virtual},
for all $s\geq0$,
\[
\tilde{M}_{n}^{s}\Rightarrow\hat{M}^{s}\quad\mbox{as }n\rightarrow\infty,
\]
where
\[
\hat{M}^{s}(t)=\hat{X}(0)+\hat{E}^{s}(t)-\hat{G}^{s}\Big(\int_{0}^{t}y^{s}(u)^{+}\,\mathrm{d}u\Big)-\hat{B}(t).
\]

\end{lemma}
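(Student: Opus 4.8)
The plan is to follow the proof of Lemma~\ref{lemma:Y} almost verbatim: establish the convergence of each of the four ingredients of $\tilde{M}_{n}^{s}$ and then glue them together by continuous mapping. Writing $\zeta_{n}(t)=\int_{0}^{t}\bar{Y}_{n}^{s}(u)^{+}\,\mathrm{d}u$, one has $\tilde{M}_{n}^{s}=\tilde{Y}_{n}(0)\chi+\tilde{E}_{n}^{s}-\tilde{G}_{n}^{s}\circ\zeta_{n}-\tilde{B}_{n}$. First I would record the four marginal limits: $\tilde{Y}_{n}(0)\Rightarrow\hat{X}(0)$ by \eqref{eq:initial} (recall $Y_{n}(0)=X_{n}(0)$); $\tilde{E}_{n}^{s}=\tilde{E}_{n}\circ e^{s}\Rightarrow\hat{E}\circ e^{s}=\hat{E}^{s}$ by \eqref{eq:E} and continuity of composition with the fixed continuous function $e^{s}$ (using that $\hat{E}$ has continuous paths); $\tilde{B}_{n}\Rightarrow\hat{B}$ by Theorem~\ref{theorem:FCLT}; and $\tilde{G}_{n}^{s}\Rightarrow\hat{G}^{s}$, a standard Brownian motion, by the FCLT for the Poisson process, since $\tilde{G}_{n}^{s}$ is a centered unit-rate Poisson process scaled by $n\gamma_{n}$ in time and $\sqrt{n\gamma_{n}}$ in space, so that $\mathrm{Var}[\tilde{G}_{n}^{s}(t)]=t$.

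Next I would promote these to a joint convergence. Since $\tilde{Y}_{n}(0)$, $\tilde{E}_{n}^{s}$, $\tilde{G}_{n}^{s}$, and $\tilde{B}_{n}$ are measurable functions of the mutually independent objects $X_{n}(0)$, $E_{n}$, the driving Poisson process, and $(N_{1},\dots,N_{n})$ respectively, the marginal convergences upgrade to joint convergence with mutually independent limits $\hat{X}(0)$, $\hat{E}^{s}$, $\hat{G}^{s}$, $\hat{B}$ --- precisely the independence structure required in the statement. For the inner clock, Lemma~\ref{Lemma:Yt-bar} gives $\bar{Y}_{n}^{s}\Rightarrow y^{s}$; because $y^{s}$ is continuous, the functional $f\mapsto\int_{0}^{\cdot}f(u)^{+}\,\mathrm{d}u$ is continuous at $y^{s}$, so $\zeta_{n}\Rightarrow\zeta$ with $\zeta(t)=\int_{0}^{t}y^{s}(u)^{+}\,\mathrm{d}u$ deterministic, continuous, and nondecreasing.

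Then, because $\zeta$ is deterministic, the five-tuple $(\tilde{Y}_{n}(0)\chi,\tilde{E}_{n}^{s},\tilde{G}_{n}^{s},\tilde{B}_{n},\zeta_{n})$ converges jointly to $(\hat{X}(0)\chi,\hat{E}^{s},\hat{G}^{s},\hat{B},\zeta)$. Since $\hat{G}^{s}$ has continuous paths almost surely and $\zeta$ is continuous and nondecreasing, the composition map is continuous at $(\hat{G}^{s},\zeta)$ --- the same fact invoked for $\tilde{A}_{n}$ in the proof of Lemma~\ref{lemma:Y} --- and the addition map is continuous at the limit point because all of $\hat{X}(0)\chi$, $\hat{E}^{s}$, $\hat{G}^{s}\circ\zeta$, and $\hat{B}$ have continuous paths. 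Applying the continuous map $(w_{1},w_{2},w_{3},w_{4},z)\mapsto w_{1}+w_{2}-w_{3}\circ z-w_{4}$ then yields $\tilde{M}_{n}^{s}\Rightarrow\hat{X}(0)+\hat{E}^{s}-\hat{G}^{s}\circ\zeta-\hat{B}=\hat{M}^{s}$.

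The hard part here is bookkeeping rather than estimation: addition and composition on $(\mathbb{D},J_{1})$ are continuous only at points whose coordinates have continuous sample paths, so one must check that $\hat{E}^{s}$, $\hat{G}^{s}\circ\zeta$, and $\hat{B}$ are continuous (they are --- a Brownian motion frozen after a deterministic time, a Brownian motion run along a continuous deterministic clock, and a Brownian motion), and the composed term $\tilde{G}_{n}^{s}\circ\zeta_{n}$ must be carried along jointly with the other three even though $\zeta_{n}$ is not independent of $\tilde{G}_{n}^{s}$, which is exactly why the deterministic-limit observation for $\zeta$ is needed. Everything else --- the FCLT for the time-scaled Poisson process and the continuity of $f\mapsto\int_{0}^{\cdot}f(u)^{+}\,\mathrm{d}u$ at a continuous limit --- is standard and has already appeared in essentially this form earlier in the section.
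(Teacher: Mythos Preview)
Your proposal is correct and follows essentially the same route as the paper's proof: the paper invokes \eqref{eq:E} for $\tilde{E}_{n}^{s}\Rightarrow\hat{E}^{s}$, the FCLT for renewal processes together with Lemma~\ref{Lemma:Yt-bar} and the random-time-change theorem for the composed term $\tilde{G}_{n}^{s}\circ\zeta_{n}$, and then \eqref{eq:initial} and Theorem~\ref{theorem:FCLT} for the remaining pieces. Your write-up simply makes explicit the bookkeeping the paper leaves implicit --- the independence-based joint convergence, the Slutsky step for appending the deterministic-limit clock $\zeta_{n}$, and the continuity of addition and composition at continuous-path limits.
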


\begin{proof}
By \eqref{eq:E}, $\tilde{E}_{n}^{s}\Rightarrow\hat{E}^{s}$
as $n\rightarrow\infty$. By the FCLT for renewal processes, Lemma~\ref{Lemma:Yt-bar},
and the random-time-change theorem (see Theorem~5.3 in \citet{ChenYao01}),
\[
\Big\{\tilde{G}_{n}^{s}\Big(\int_{0}^{t}\bar{Y}_{n}^{s}(u)^{+}\,\mathrm{d}u\Big):t\geq0\Big\}\Rightarrow
\Big\{\hat{G}^{s}\Big(\int_{0}^{t}y^{s}(u)\,\mathrm{d}u\Big):t\geq 0\Big\}\quad\mbox{as }n\rightarrow\infty.
\]
Then, the lemma follows from \eqref{eq:initial} and Theorem~\ref{theorem:FCLT}.

\end{proof}

Now consider the diffusion-scaled queue length process, which is defined
by
\begin{align}
\tilde{Y}_{n}^{s}(t) & =\frac{1}{\sqrt{n\gamma_{n}}}(Y_{n}^{\gamma_{n}s}(\gamma_{n}t)-n-n\gamma_{n}y^{s}(t))\quad\mbox{for }0\leq t\leq s+\log\rho.\label{eq:stopped-diffusion}
\end{align}
In the subsequent proofs, $\tilde{Y}_{n}^{s}$ is considered only up to time
$s+\log\rho$. For our convenience, we set
\[
\tilde{Y}_{n}^{s}(t)=\tilde{Y}_{n}^{s}(s+\log\rho)\quad\mbox{for }t>s+\log\rho.
\]
Using these processes, we can derive the diffusion-scaled
dynamical equation from \eqref{eq:stopped-perturbed},
\begin{equation}
\tilde{Y}_{n}^{s}(t)=\tilde{M}_{n}^{s}(t)-\sqrt{n\gamma_{n}}\int_{0}^{t}(\bar{Y}_{n}^{s}(u)^{+}-y^{s}(u))\,\mathrm{d}u
\quad\mbox{for }0\leq t\leq s+\log\rho.
\label{eq:Yt_tilde}
\end{equation}
We will see that the diffusion-scaled virtual waiting time at $s$
is closely related to the diffusion-scaled queue length at $s+\log\rho$.
The next lemma is a technical result, which states the stochastic
boundedness of $\{\tilde{Y}_{n}^{s}(s+\log\rho):n\in\mathbb{N}\}$.

\begin{lemma}
\label{lemma:Yw-bounded}
Under the conditions of Theorem~\ref{theorem:many-server-virtual},
\[
\lim_{a\rightarrow\infty}\limsup_{n\rightarrow\infty}\mathbb{P}[|\tilde{Y}_{n}^{s}(s+\log\rho)|>a]=0.
\]
\end{lemma}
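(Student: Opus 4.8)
The plan is to exploit the diffusion-scaled dynamical equation \eqref{eq:Yt_tilde} together with the convergence $\tilde{M}_{n}^{s}\Rightarrow\hat{M}^{s}$ from Lemma~\ref{lemma:Ms-tilde}, and to control the drift term $\sqrt{n\gamma_{n}}\int_{0}^{t}(\bar{Y}_{n}^{s}(u)^{+}-y^{s}(u))\,\mathrm{d}u$ on the interval $[0,s+\log\rho]$. The key observation is that on $[0,s]$ the fluid path $y^{s}$ is the positive constant $(\rho-1)\mu$, while on $[s,s+\log\rho]$ it decreases linearly (after the exponential piece) down to $0$ at $t=s+\log\rho$; crucially $y^{s}(u)>0$ for all $u<s+\log\rho$, so $y^{s}(u)^{+}=y^{s}(u)$ there and the difference $\bar{Y}_{n}^{s}(u)^{+}-y^{s}(u)$ is, up to events of vanishing probability, just $\bar{Y}_{n}^{s}(u)-y^{s}(u)=(n\gamma_{n})^{-1/2}\,\tilde{Y}_{n}^{s}(u)$ on the bulk of that interval. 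Substituting this back, $\tilde{Y}_{n}^{s}$ satisfies (approximately) a linear integral equation driven by $\tilde{M}_{n}^{s}$, to which the map $\psi$ of Lemma~\ref{lemma:map} applies, yielding $\tilde{Y}_{n}^{s}\Rightarrow\tilde{Y}^{s}$ for some limit process. Stochastic boundedness of $\{\tilde{Y}_{n}^{s}(s+\log\rho)\}$ then follows because a weakly convergent sequence in $\mathbb{R}$ is automatically tight.

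Concretely, the steps I would carry out are: (i) Restrict attention to the event $\{\bar{\tau}_{n}>s+\log\rho\}$, which has probability tending to $1$ by Lemma~\ref{lemma:tau_n} (applied with $T=s+\log\rho$ after rescaling); off this event $\tilde{Y}_{n}^{s}(s+\log\rho)$ need not be controlled but contributes nothing in the limit. (ii) On that event, show that $\sup_{0\le u\le s+\log\rho}|\bar{Y}_{n}^{s}(u)^{+}-y^{s}(u)|\to 0$ in probability — this is immediate from Lemma~\ref{Lemma:Yt-bar} since $\bar{Y}_{n}^{s}\Rightarrow y^{s}$ uniformly on compacts (the limit is continuous) — and moreover that for large $n$ the path $\bar{Y}_{n}^{s}$ stays strictly positive on $[0,s+\log\rho-\delta]$ so that $\bar{Y}_{n}^{s}(u)^{+}-y^{s}(u)=(n\gamma_{n})^{-1/2}\tilde{Y}_{n}^{s}(u)$ there. (iii) Handle the small terminal window $[s+\log\rho-\delta,\,s+\log\rho]$, where $y^{s}$ may dip to $0$, by the crude a~priori bound $Y_{n}^{\gamma_{n}s}(\cdot)\ge 0$ together with $Y_{n}^{\gamma_{n}s}(t)\le X_{n}(0)+E_{n}^{\gamma_{n}s}(t)$, which gives a uniform (in $n$, after scaling) bound on $|\tilde{Y}_{n}^{s}|$ over that short interval of the right order; letting $\delta\downarrow 0$ afterwards removes the contribution of this window to the drift. (iv) Conclude via Lemma~\ref{lemma:map} and the continuous-mapping / convergence-together theorems that $\tilde{Y}_{n}^{s}\Rightarrow\psi(\hat{M}^{s})=:\tilde{Y}^{s}$ in $\mathbb{D}[0,s+\log\rho]$, and in particular $\tilde{Y}_{n}^{s}(s+\log\rho)\Rightarrow\tilde{Y}^{s}(s+\log\rho)$, hence the stated stochastic boundedness.

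The main obstacle I anticipate is step~(iii): near $t=s+\log\rho$ the fluid level $y^{s}$ touches $0$, so the sign of $\bar{Y}_{n}^{s}(u)-y^{s}(u)$ is no longer controlled and the identification $\bar{Y}_{n}^{s}(u)^{+}-y^{s}(u)=(n\gamma_{n})^{-1/2}\tilde{Y}_{n}^{s}(u)$ can fail; the positive-part nonlinearity is genuinely active there. The fix is to avoid inverting the dynamics on that window altogether and instead bound $|\tilde{Y}_{n}^{s}(s+\log\rho)-\tilde{Y}_{n}^{s}(s+\log\rho-\delta)|$ directly from the increments of $\tilde{M}_{n}^{s}$ (which are stochastically bounded, being convergent) plus a drift contribution that is $O(\delta)$ uniformly in $n$ on the event of interest, using $|\bar{Y}_{n}^{s}(u)^{+}-y^{s}(u)|\le \bar{Y}_{n}^{s}(u)^{+}+y^{s}(u)$ and the fluid bound $\sup_{u\le s+\log\rho}\bar{Y}_{n}^{s}(u)^{+}\Rightarrow\sup y^{s}<\infty$. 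One then first sends $n\to\infty$ and then $\delta\downarrow 0$. Everything else is routine application of the continuous mapping framework already set up for Lemma~\ref{lemma:Y}.
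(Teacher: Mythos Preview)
Your step~(iii) has a genuine gap. The drift over the terminal window is
\[
\sqrt{n\gamma_{n}}\int_{s+\log\rho-\delta}^{s+\log\rho}\bigl(\bar{Y}_{n}^{s}(u)^{+}-y^{s}(u)\bigr)\,\mathrm{d}u,
\]
and the crude bound $|\bar{Y}_{n}^{s}(u)^{+}-y^{s}(u)|\le \bar{Y}_{n}^{s}(u)^{+}+y^{s}(u)$ you propose is a \emph{fluid-scale} bound: it controls the integrand only at order $O(1)$ (or $O(\delta)$, using that $y^{s}$ is small near the endpoint), so after the factor $\sqrt{n\gamma_{n}}$ the contribution is of order $\sqrt{n\gamma_{n}}\,\delta$, which blows up as $n\to\infty$ for every fixed $\delta>0$. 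Taking $\delta\downarrow 0$ afterwards cannot repair this. Consequently step~(iv) is never reached; and in any case step~(iv) amounts to proving Lemma~\ref{lemma:YY}, whose proof in the paper \emph{uses} the present lemma (via \eqref{eq:Y-bar-minus}), so the ordering you propose is circular.

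The fix is a one-line inequality you overlooked: for any $a\in\mathbb{R}$ and $b\ge 0$, $|a^{+}-b|\le |a-b|$. Since $y^{s}(u)\ge 0$ on $[0,s+\log\rho]$, this gives $\sqrt{n\gamma_{n}}\,|\bar{Y}_{n}^{s}(u)^{+}-y^{s}(u)|\le |\tilde{Y}_{n}^{s}(u)|$ on the \emph{entire} interval, regardless of the sign of $\bar{Y}_{n}^{s}(u)$. Feeding this into \eqref{eq:Yt_tilde} yields
\[
|\tilde{Y}_{n}^{s}(t)|\le |\tilde{M}_{n}^{s}(t)|+\int_{0}^{t}|\tilde{Y}_{n}^{s}(u)|\,\mathrm{d}u\quad\text{for }0\le t\le s+\log\rho,
\]
and Gronwall's inequality gives $|\tilde{Y}_{n}^{s}(s+\log\rho)|\le \rho e^{s}\sup_{0\le t\le s+\log\rho}|\tilde{M}_{n}^{s}(t)|$, which is stochastically bounded by Lemma~\ref{lemma:Ms-tilde}. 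No event restriction, no $\delta$-window, no continuous-mapping argument is needed.
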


\begin{proof}
Because $y^{s}(u)\geq0$ for $0\leq u\leq s+\log\rho$,
\[
\sqrt{n\gamma_{n}}|\bar{Y}_{n}^{s}(u)^{+}-y^{s}(u)|\leq\sqrt{n\gamma_{n}}|\bar{Y}_{n}^{s}(u)-y^{s}(u)|=|\tilde{Y}_{n}^{s}(u)|.
\]
$^{^{^{^ {}}}}$Then by \eqref{eq:Yt_tilde},
\[
|\tilde{Y}_{n}^{s}(t)|\leq|\tilde{M}_{n}^{s}(t)|+\int_{0}^{t}|\tilde{Y}_{n}^{s}(u)|\,\mathrm{d}u\quad\mbox{for }0\leq t\leq s+\log\rho.
\]
It follows from Gronwall's inequality (see Lemma~21.4 in \citet{Kallenberg02}) that
\[
|\tilde{Y}_{n}^{s}(s+\log\rho)|\leq\sup_{0\leq t\leq s+\log\rho}|\tilde{M}_{n}^{s}(t)|\rho\exp(s).
\]
Lemma~\ref{lemma:Ms-tilde} implies that $\{\tilde{M}_{n}^{s}:n\in\mathbb{N}\}$ is stochastically
bounded. So is $\{\tilde{Y}_{n}^{s}(s+\log\rho):n\in\mathbb{N}\}$.

\end{proof}

Let
\begin{equation}
\tilde{V}_{n}(s)=\sqrt{n\gamma_{n}}(\bar{V}_{n}(s)-\log\rho),
\label{eq:Vn-tilde}
\end{equation}
which is the diffusion-scaled virtual waiting time in the perturbed
system at $s$. The following lemma states that $\tilde{V}_{n}(s)$
and $\tilde{Y}_{n}^{s}(s+\log\rho)/\mu$ are asymptotically close.

\begin{lemma}
\label{lemma:VY}
Under the conditions of Theorem~\ref{theorem:many-server-virtual},
for all $s\geq0$
\[
\tilde{V}_{n}(s)-\mu^{-1}\tilde{Y}_{n}^{s}(s+\log\rho)\Rightarrow0\quad\mbox{as }n\rightarrow\infty.
\]

\end{lemma}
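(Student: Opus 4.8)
The plan is to relate the virtual waiting time $\bar V_n(s)$, defined in \eqref{eq:V-bar} as the first time after $s$ that $\bar Y_n^s$ hits zero, to the value of $\bar Y_n^s$ at the deterministic fluid crossing time $s+\log\rho$, by a first-order Taylor expansion around that crossing time. The key structural fact is that for $t\ge s$ the process $\bar Y_n^s$ is nonincreasing (the arrival stream is switched off at $s$), and its fluid limit $y^s$ is strictly decreasing through zero at $t=s+\log\rho$ with slope $\dot y^s((s+\log\rho)^-) = -\mu$; see \eqref{eq:ys}. So intuitively $\bar Y_n^s(s+\bar V_n(s))=0$ forces $\bar V_n(s) - \log\rho \approx \bar Y_n^s(s+\log\rho)/\mu$ once we know the fluctuations of $\bar Y_n^s$ near the crossing time are of order $(n\gamma_n)^{-1/2}$.

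First I would write, for the fluid-scaled hitting time, the exact identity coming from \eqref{eq:stopped-fluid} on the interval $t\ge s$, where the $\bar E_n^s$ term is frozen at $\bar E_n^s(s)$; thus for $t\ge s$,
\[
\bar Y_n^s(t) = \bar Y_n^s(s) - \bar G_n^s\Big(\int_0^t \bar Y_n^s(u)^+\,\mathrm{d}u\Big) + \bar G_n^s\Big(\int_0^s \bar Y_n^s(u)^+\,\mathrm{d}u\Big) - (\bar B_n(t)-\bar B_n(s)) - \int_s^t \bar Y_n^s(u)^+\,\mathrm{d}u.
\]
Evaluating this at $t=s+\bar V_n(s)$ (where the left side is $0$, by right-continuity and monotonicity, up to a jump of size $O(1/(n\gamma_n))$ which is negligible after multiplying by $\sqrt{n\gamma_n}$) and subtracting the analogous evaluation at $t=s+\log\rho$ gives
\[
-\tilde Y_n^s(s+\log\rho)/\sqrt{n\gamma_n} = \bar Y_n^s(s+\bar V_n(s)) - \bar Y_n^s(s+\log\rho) = -\!\!\int_{s+\log\rho}^{s+\bar V_n(s)}\!\!\bar Y_n^s(u)^+\,\mathrm{d}u + \big(\text{martingale-type increments over a shrinking interval}\big).
\]
Then I would multiply by $\sqrt{n\gamma_n}$, use Lemma~\ref{lemma:Vn} to localize $\bar V_n(s)$ near $\log\rho$, use Lemma~\ref{lemma:Yw-bounded} together with \eqref{eq:Vn-tilde} to see that the interval $[s+\log\rho, s+\bar V_n(s)]$ has length $\tilde V_n(s)/\sqrt{n\gamma_n}$, and use Lemma~\ref{Lemma:Yt-bar} (the fluid limit $\bar Y_n^s\Rightarrow y^s$, uniformly on compacts, with $y^s$ continuous and $y^s(s+\log\rho)=0$, $\dot y^s = -\mu$ just before the crossing) to identify
\[
\sqrt{n\gamma_n}\int_{s+\log\rho}^{s+\bar V_n(s)} \bar Y_n^s(u)^+\,\mathrm{d}u = \mu\,\tilde V_n(s)\cdot\big(\tfrac12 + o_{\mathbb P}(1)\big)\cdot\frac{\tilde V_n(s)}{\sqrt{n\gamma_n}} \;\Rightarrow\; 0,
\]
since $\tilde V_n(s)^2/\sqrt{n\gamma_n}\to 0$ in probability once $\tilde V_n(s)$ is shown to be stochastically bounded. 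The error terms coming from $\bar G_n^s$ and $\bar B_n$ over the shrinking random interval vanish after the $\sqrt{n\gamma_n}$ scaling by the FCLT for the Poisson process $G^s$ and by Theorem~\ref{theorem:FCLT} for $B_n$, respectively, together with the oscillation control (modulus of continuity going to zero) of the corresponding Brownian limits on intervals of vanishing length; alternatively one can invoke the random-time-change theorem as in the proof of Lemma~\ref{lemma:Ms-tilde}. What remains is exactly $\mu\,\bar V_n(s)$-linearized: collecting terms yields $\mu\,\tilde V_n(s) - \tilde Y_n^s(s+\log\rho) + o_{\mathbb P}(1) = 0$, i.e.\ the claimed asymptotic equality, provided we have first argued stochastic boundedness of $\tilde V_n(s)$, which follows by the same identity applied with crude bounds (or by a bootstrap: $\tilde V_n(s) = \mu^{-1}\tilde Y_n^s(s+\log\rho) + O_{\mathbb P}(\tilde V_n(s)^2/\sqrt{n\gamma_n})$, and Lemma~\ref{lemma:Yw-bounded} gives the leading term is $O_{\mathbb P}(1)$, so $\tilde V_n(s)$ cannot escape to infinity).

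The main obstacle I expect is controlling the discretization/jump effects and the fluctuation terms over the \emph{random} shrinking interval $[s+\log\rho,\,s+\bar V_n(s)]$ rigorously — in particular showing that $\sqrt{n\gamma_n}$ times the increments of $\bar G_n^s$ and $\bar B_n$ across this interval are $o_{\mathbb P}(1)$ without circular reasoning about the length of the interval. The clean way around this is to prove stochastic boundedness of $\tilde V_n(s)$ first (via Lemma~\ref{lemma:Yw-bounded} and a Gronwall-type a priori bound as in Lemma~\ref{lemma:Yw-bounded}'s proof), then fix a large constant $a$, work on the event $\{|\tilde V_n(s)|\le a\}$ where the interval has length at most $a/\sqrt{n\gamma_n}\to 0$ deterministically, apply the oscillation bounds of the diffusion-scaled processes $\tilde G_n^s$ and $\tilde B_n$ (which converge weakly, hence are $C$-tight, so have uniformly vanishing modulus of continuity at scale $a/\sqrt{n\gamma_n}$), and finally let $a\to\infty$.
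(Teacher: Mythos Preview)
Your overall strategy---difference the fluid-scaled equation \eqref{eq:stopped-fluid} between $t=s+\bar V_n(s)$ and $t=s+\log\rho$, multiply by $\sqrt{n\gamma_n}$, and show the remainder terms vanish---is exactly the paper's route. But there is a concrete bookkeeping error in your decomposition. You lump the $\bar B_n$-increment into ``martingale-type increments over a shrinking interval'' and assert it vanishes after scaling; in fact this increment carries the entire leading term. Since $\bar B_n(t)=\tilde B_n(t)/\sqrt{n\gamma_n}+\mu t$,
\[
\sqrt{n\gamma_n}\big(\bar B_n(s+\bar V_n(s))-\bar B_n(s+\log\rho)\big)=\big(\tilde B_n(s+\bar V_n(s))-\tilde B_n(s+\log\rho)\big)+\mu\tilde V_n(s),
\]
and only the $\tilde B_n$-oscillation vanishes; the drift contributes precisely $\mu\tilde V_n(s)$. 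With your displayed decomposition taken literally, every right-hand term goes to zero and you would conclude $\tilde Y_n^s(s+\log\rho)\Rightarrow 0$, which is false. The $\mu\tilde V_n(s)$ you ``collect'' at the end must come from the $\bar B_n$ drift, not from a Taylor expansion of the integral term or from the slope of $y^s$.

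Your circularity concern---needing stochastic boundedness of $\tilde V_n(s)$ before controlling the error terms on the random interval---is a detour the paper avoids entirely. For the $\tilde B_n$ and $\tilde G_n^s$ oscillations, Lemma~\ref{lemma:Vn} already gives $\bar V_n(s)\Rightarrow\log\rho$ (a constant), and weak convergence of $\tilde B_n$ and $\tilde G_n^s$ to continuous limits then yields vanishing of the increments directly; no interval-length bound in terms of $\tilde V_n(s)$ is needed. For the integral term the paper uses the crude monotonicity bound
\[
\sqrt{n\gamma_n}\,\Big|\int_{s+\log\rho}^{s+\bar V_n(s)}\bar Y_n^s(u)^{+}\,\mathrm{d}u\Big|\le |\bar V_n(s)-\log\rho|\Big(|\tilde Y_n^s(s+\log\rho)|+\sqrt{n\gamma_n}\,|\bar Y_n^s(s+\bar V_n(s))|\Big),
\]
where $|\bar V_n(s)-\log\rho|\Rightarrow 0$ by Lemma~\ref{lemma:Vn}, the first factor in parentheses is stochastically bounded by Lemma~\ref{lemma:Yw-bounded}, and the second vanishes because $B_n$ has jumps of size at most one. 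None of this touches $\tilde V_n(s)$, so no bootstrap is required.
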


\begin{proof}
Because $\bar{E}_{n}^{s}(s+\bar{V}_{n}(s))=\bar{E}_{n}^{s}(s+\log\rho)=\bar{E}_{n}(s)$,
it follows from \eqref{eq:stopped-fluid} that
\begin{multline}
\bar{Y}_{n}^{s}(s+\log\rho)=\bar{Y}_{n}^{s}(s+\bar{V}_{n}(s))+\bar{B}_{n}(s+\bar{V}_{n}(s))-\bar{B}_{n}(s+\log\rho)\\
+\bar{G}_{n}^{s}\Big(\int_{0}^{s+\bar{V}_{n}(s)}\bar{Y}_{n}^{s}(u)^{+}\,\mathrm{d}u\Big)-\bar{G}_{n}^{s}\Big(\int_{0}^{s+\log\rho}\bar{Y}_{n}^{s}(u)^{+}\,\mathrm{d}u\Big)\\
+\int_{0}^{s+\bar{V}_{n}(s)}\bar{Y}_{n}^{s}(u)^{+}\,\mathrm{d}u-\int_{0}^{s+\log\rho}\bar{Y}_{n}^{s}(u)^{+}\,\mathrm{d}u.\label{eq:VY}
\end{multline}
Multiply both sides of \eqref{eq:VY} by $\sqrt{n\gamma_{n}}$ and
let us consider each term.

By \eqref{eq:stopped-diffusion} and the fact that $y^{s}(s+\log\rho)=0$,
the left side turns out to be
\begin{equation}
\sqrt{n\gamma_{n}}\bar{Y}_{n}^{s}(s+\log\rho)=\tilde{Y}_{n}^{s}(s+\log\rho).\label{eq:left}
\end{equation}
Consider the right side. If the arrival process stops at time
$\gamma_{n}s$ for $0\leq s< \bar{\tau}_{n}$, the first idle server will appear
at $\gamma_{n}(s+\bar{V}_{n}(s))$.
This must be triggered by a service completion. Because $B_{n}$ is
the superposition of $n$ iid stationary renewal processes, the probability
that $B_{n}$ has a jump of size larger than $1$ is $0$, which implies that
\[
\mathbb{P}\Big[\bar{Y}_{n}^{s}(s+\bar{V}_{n}(s)) < -\frac{1}{n\gamma_{n}}\Big]
\leq \mathbb{P}[\bar{\tau}_{n}\leq s].
\]
Then, by Lemma~\ref{lemma:tau_n},
\begin{equation}
\sqrt{n\gamma_{n}}\bar{Y}_{n}^{s}(s+\bar{V}_{n}(s))
\Rightarrow0\quad\mbox{as }n\rightarrow\infty.\label{eq:YV}
\end{equation}
By \eqref{eq:B_tilde}, \eqref{eq:EGB-bar}, and \eqref{eq:Vn-tilde},
\[
\sqrt{n\gamma_{n}}(\bar{B}_{n}(s+\bar{V}_{n}(s))-\bar{B}_{n}(s+\log\rho))=
\tilde{B}_{n}(s+\bar{V}_{n}(s))-\tilde{B}_{n}(s+\log\rho)+\mu\tilde{V}_{n}(s),
\]
in which we have
\begin{equation}
\tilde{B}_{n}(s+\bar{V}_{n}(s))-\tilde{B}_{n}(s+\log\rho)\Rightarrow0
\quad\mbox{as }n\rightarrow\infty\label{eq:BV}
\end{equation}
by Theorem~\ref{theorem:FCLT} and Lemma~\ref{lemma:Vn}. Because
$\sqrt{n\gamma_{n}}\bar{G}_{n}^{s}=\tilde{G}_{n}^{s}$ and
$\tilde{G}_{n}^{s}\Rightarrow\hat{G}^{s}$
as $n\rightarrow\infty$, it follows from Lemmas~\ref{Lemma:Yt-bar}
and~\ref{lemma:Vn} that
\begin{equation}
\sqrt{n\gamma_{n}}\bigg(\bar{G}_{n}^{s}
\Big(\int_{0}^{s+\bar{V}_{n}(s)}\bar{Y}_{n}^{s}(u)^{+}\,\mathrm{d}u\Big)-\bar{G}_{n}^{s}\Big(\int_{0}^{s+\log\rho}\bar{Y}_{n}^{s}(u)^{+}\,\mathrm{d}u\Big)\bigg)\Rightarrow0\quad\mbox{as }n\rightarrow\infty.\label{eq:GV}
\end{equation}
Because $\bar{Y}_{n}^{s}(t)$ is nonincreasing for $t\geq s$,
\begin{multline*}
\sqrt{n\gamma_{n}}\bigg\vert\int_{0}^{s+\bar{V}_{n}(s)}\bar{Y}_{n}^{s}(u)^{+}\,\mathrm{d}u-\int_{0}^{s+\log\rho}\bar{Y}_{n}^{s}(u)^{+}\,\mathrm{d}u\bigg\vert\\
\leq|\bar{V}_{n}(s)-\log\rho||\tilde{Y}_{n}^{s}(s+\log\rho)|+|\bar{V}_{n}(s)-\log\rho|\sqrt{n\gamma_{n}}|\bar{Y}_{n}^{s}(s+\bar{V}_{n}(s))|.
\end{multline*}
Then, by \eqref{eq:YV} and Lemmas~\ref{lemma:Vn} and \ref{lemma:Yw-bounded},
\begin{equation}
\sqrt{n\gamma_{n}}\Big(\int_{0}^{s+\bar{V}_{n}(s)}\bar{Y}_{n}^{s}(u)^{+}\,\mathrm{d}u-\int_{0}^{s+\log\rho}\bar{Y}_{n}^{s}(u)^{+}\,\mathrm{d}u\Big)\Rightarrow0\quad\mbox{as }n\rightarrow\infty.\label{eq:IYV}
\end{equation}
We deduce from \eqref{eq:VY}--\eqref{eq:IYV} that $\tilde{V}_{n}(s)-\tilde{Y}_{n}^{s}(s+\log\rho)/\mu\Rightarrow0$
as $n\rightarrow\infty$.

\end{proof}

\begin{lemma}
\label{lemma:YY}
Under the conditions of Theorem~\ref{theorem:many-server-virtual},
for all $s\geq0$,
\[
\tilde{Y}_{n}^{s}\Rightarrow\hat{Y}^{s}\quad\mbox{as }n\rightarrow\infty,
\]
where
\[
\hat{Y}^{s}(t)= \hat{M}^{s}(t)-\int_{0}^{t}\hat{Y}^{s}(u)\,\mathrm{d}u \quad \mbox{for }0\leq t\leq s+\log\rho
\]
and $\hat{Y}^{s}(t)=\hat{Y}^{s}(s+\log\rho)$ for $t>s+\log\rho$.
\end{lemma}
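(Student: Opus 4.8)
The plan is to recast the diffusion-scaled dynamical equation \eqref{eq:Yt_tilde} into the fixed-point form solved by the map $\psi$ of Lemma~\ref{lemma:map}, up to an asymptotically negligible correction that records the vanishing idling of the $n$th perturbed system near time $s+\log\rho$; this parallels the treatment of $\tilde{\Delta}_{n}$ in the proof of Lemma~\ref{lemma:Y}. First I would use that $\bar{Y}_{n}^{s}(u)^{+}=\bar{Y}_{n}^{s}(u)+\bar{Y}_{n}^{s}(u)^{-}$ and that $\sqrt{n\gamma_{n}}(\bar{Y}_{n}^{s}(u)-y^{s}(u))=\tilde{Y}_{n}^{s}(u)$ by \eqref{eq:stopped-diffusion}, so that \eqref{eq:Yt_tilde} becomes
\[
\tilde{Y}_{n}^{s}(t)=\tilde{M}_{n}^{s}(t)-\tilde{\Delta}_{n}^{s}(t)-\int_{0}^{t}\tilde{Y}_{n}^{s}(u)\,\mathrm{d}u\quad\mbox{for }0\leq t\leq s+\log\rho,
\]
where $\tilde{\Delta}_{n}^{s}(t)=\sqrt{n\gamma_{n}}\int_{0}^{t}\bar{Y}_{n}^{s}(u)^{-}\,\mathrm{d}u$. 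Thus $\tilde{Y}_{n}^{s}=\psi(\tilde{M}_{n}^{s}-\tilde{\Delta}_{n}^{s})$ on $[0,s+\log\rho]$, while $\hat{Y}^{s}=\psi(\hat{M}^{s})$ by construction. Since $\tilde{M}_{n}^{s}\Rightarrow\hat{M}^{s}$ by Lemma~\ref{lemma:Ms-tilde}, the lemma will follow from Lemma~\ref{lemma:map}, the continuous mapping theorem, and the convergence-together theorem, provided I show $\tilde{\Delta}_{n}^{s}\Rightarrow0$.

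Establishing $\tilde{\Delta}_{n}^{s}\Rightarrow0$ is the step I expect to be the main obstacle. Since $\tilde{\Delta}_{n}^{s}$ is nondecreasing with $\tilde{\Delta}_{n}^{s}(0)=0$, it suffices to show $\tilde{\Delta}_{n}^{s}(s+\log\rho)\Rightarrow0$. Because $y^{s}(u)\geq0$ on $[0,s+\log\rho]$ by \eqref{eq:ys}, one has $\bar{Y}_{n}^{s}(u)^{-}\leq|\bar{Y}_{n}^{s}(u)-y^{s}(u)|=|\tilde{Y}_{n}^{s}(u)|/\sqrt{n\gamma_{n}}$, so
\[
\tilde{\Delta}_{n}^{s}(s+\log\rho)\leq\Big(\sup_{0\leq u\leq s+\log\rho}|\tilde{Y}_{n}^{s}(u)|\Big)\,L_{n},
\]
where $L_{n}$ is the Lebesgue measure of $\{u\in[0,s+\log\rho]:\bar{Y}_{n}^{s}(u)<0\}$. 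The Gronwall estimate in the proof of Lemma~\ref{lemma:Yw-bounded}, applied at a generic $t$ rather than only at $s+\log\rho$, gives $\sup_{0\leq u\leq s+\log\rho}|\tilde{Y}_{n}^{s}(u)|\leq\rho\exp(s)\sup_{0\leq u\leq s+\log\rho}|\tilde{M}_{n}^{s}(u)|$, which is stochastically bounded by Lemma~\ref{lemma:Ms-tilde}. For $L_{n}$, I would use that $\bar{Y}_{n}^{s}$ is nonincreasing once it first drops below $0$ (after time~$s$ the arrival stream is off, and no abandonment occurs while $\bar{Y}_{n}^{s}<0$, so only the departure term acts); since $y^{s}$ is continuous and strictly positive on $[0,s]$, Lemma~\ref{Lemma:Yt-bar} shows that with probability tending to one $\bar{Y}_{n}^{s}>0$ throughout $[0,s]$, so on that event the first passage below $0$ occurs at $s+\bar{V}_{n}(s)$ and $L_{n}\leq(\log\rho-\bar{V}_{n}(s))^{+}$. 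By Lemma~\ref{lemma:Vn}, $\bar{V}_{n}(s)\Rightarrow\log\rho$, hence $L_{n}\Rightarrow0$; a stochastically bounded sequence times a sequence converging to~$0$ in probability converges to~$0$, so $\tilde{\Delta}_{n}^{s}(s+\log\rho)\Rightarrow0$, and therefore $\tilde{\Delta}_{n}^{s}\Rightarrow0$ in $\mathbb{D}$ over $[0,s+\log\rho]$, the convergence being uniform since $\tilde{\Delta}_{n}^{s}$ is nondecreasing.

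With $\tilde{\Delta}_{n}^{s}\Rightarrow0$ in hand, the convergence-together theorem gives $\tilde{M}_{n}^{s}-\tilde{\Delta}_{n}^{s}\Rightarrow\hat{M}^{s}$, and applying the continuous map $\psi$ of Lemma~\ref{lemma:map} on the horizon $[0,s+\log\rho]$ yields $\tilde{Y}_{n}^{s}\Rightarrow\psi(\hat{M}^{s})=\hat{Y}^{s}$ there; since taking the constant extension past $s+\log\rho$ sends convergent sequences in $\mathbb{D}[0,s+\log\rho]$ to convergent sequences in $\mathbb{D}[0,\infty)$, the stated convergence follows. The only genuinely new ingredient compared with the proof of Theorem~\ref{theorem:many-server} is the control of the idling correction $\tilde{\Delta}_{n}^{s}$ on an interval whose right endpoint is exactly where the fluid trajectory $y^{s}$ touches zero; the interior stochastic bound from the proof of Lemma~\ref{lemma:Yw-bounded} together with the fluid-level first-passage estimate of Lemma~\ref{lemma:Vn} is what forces that term to vanish.
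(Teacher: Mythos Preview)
Your proposal is correct and follows essentially the same route as the paper: rewrite \eqref{eq:Yt_tilde} as $\tilde{Y}_{n}^{s}=\psi(\tilde{M}_{n}^{s}-\tilde{\Delta}_{n}^{s})$ with $\tilde{\Delta}_{n}^{s}(t)=\sqrt{n\gamma_{n}}\int_{0}^{t}\bar{Y}_{n}^{s}(u)^{-}\,\mathrm{d}u$, show $\tilde{\Delta}_{n}^{s}\Rightarrow0$, and apply Lemma~\ref{lemma:map}. The only cosmetic differences are that on $[0,s]$ the paper invokes $\bar{Y}_{n}^{s}=\bar{Y}_{n}$ and Lemma~\ref{lemma:tau_n} (rather than Lemma~\ref{Lemma:Yt-bar}) to force $\bar{Y}_{n}^{s}>0$ with high probability, and on $[s,s+\log\rho]$ the paper bounds the integrand by the single value $|\tilde{Y}_{n}^{s}(s+\log\rho)|$ via monotonicity---so it can quote Lemma~\ref{lemma:Yw-bounded} directly---whereas you bound by $\sup_{u}|\tilde{Y}_{n}^{s}(u)|$ and need the (easy) uniform-in-$t$ version of the Gronwall estimate; both lead to the product $|\bar{V}_{n}(s)-\log\rho|\times(\text{stochastically bounded})\Rightarrow0$.
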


\begin{proof}
Write
\[
\check{M}_{n}^{s}(t)=\tilde{M}_{n}^{s}(t)-\sqrt{n\gamma_{n}}\int_{0}^{t}\bar{Y}_{n}^{s}(u)^{-}\,\mathrm{d}u.
\]
By \eqref{eq:Yt_tilde}, $\tilde{Y}_{n}^{s}(t)=\psi(\check{M}_{n}^{s})(t)$
for $0\leq t\leq s+\log\rho$. If we can prove that
\begin{equation}
\sqrt{n\gamma_{n}}\int_{0}^{s+\log\rho}\bar{Y}_{n}^{s}(u)^{-}\,\mathrm{d}u
\Rightarrow0\quad\mbox{as }n\rightarrow\infty,\label{eq:Y-bar-minus}
\end{equation}
then $\check{M}_{n}^{s}\Rightarrow\hat{M}^{s}$ as $n\rightarrow\infty$
by Lemma~\ref{lemma:Ms-tilde} and the convergence-together theorem.
The current lemma will follow from
Lemma~\ref{lemma:map} and the continuous mapping theorem.

Because $\bar{Y}_{n}^{s}(t)=\bar{Y}_{n}(t)$ for $0\leq t\leq s$,
Lemma~\ref{lemma:tau_n} implies that
\[
\lim_{n\rightarrow\infty}\mathbb{P}\Big[\inf_{0\leq t\leq s}\bar{Y}_{n}^{s}(t)<0\Big]=0.
\]
Hence,
\begin{equation}
\sqrt{n\gamma_{n}}\int_{0}^{s}\bar{Y}_{n}^{s}(u)^{-}\,\mathrm{d}u\Rightarrow0\quad\mbox{as }n\rightarrow\infty.\label{eq:0S}
\end{equation}
Note that $\bar{Y}_{n}^{s}(t)$ is nonincreasing for $s\leq t \leq s+\log\rho$.
By \eqref{eq:V-bar} and \eqref{eq:stopped-diffusion},
\[
\sqrt{n\gamma_{n}}\int_{s}^{s+\log\rho}\bar{Y}_{n}^{s}(u)^{-}\,\mathrm{d}u =\sqrt{n\gamma_{n}}\int_{s+\bar{V}_{n}(s)}^{s+(\bar{V}_{n}(s)\vee\log\rho)}\bar{Y}_{n}^{s}(u)^{-}\,\mathrm{d}u
 \leq|(\bar{V}_{n}(s)-\log\rho)\tilde{Y}_{n}^{s}(s+\log\rho)|.
\]
Then, using Lemmas~\ref{lemma:Vn} and~\ref{lemma:Yw-bounded},
we have
\begin{equation}
\sqrt{n\gamma_{n}}\int_{s}^{s+\log\rho}\bar{Y}_{n}^{s}(u)^{-}\,\mathrm{d}u\Rightarrow0\quad\mbox{as }n\rightarrow\infty.\label{eq:SSrho}
\end{equation}
We obtain \eqref{eq:Y-bar-minus} by combining \eqref{eq:0S} and
\eqref{eq:SSrho}.

\end{proof}

\begin{proof}[Proof of Theorem~\ref{theorem:many-server-virtual}.]
Lemmas~\ref{lemma:VY} and~\ref{lemma:YY}, along with the
convergence-together theorem, imply that $\tilde{V}_{n}(s)\Rightarrow
\hat{Y}^{s}(s+\log\rho)/\mu$ as $n\rightarrow \infty$. The convergence of
$\tilde{W}_{n}$ follows from the asymptotic equivalence between $\tilde{W}_{n}(s)$
and $\tilde{V}_{n}(s)$, which can be deduced by \eqref{eq:VW_equivalence}
and Lemma~\ref{lemma:tau_n}. Solution \eqref{eq:Ystop} can be obtained by
Proposition~21.2 in \citet{Kallenberg02}.

\end{proof}

\section{Future work}
\label{sec:conclusion}

We have demonstrated that in two overloaded regimes, the queue
length process of a $\mbox{GI}/\mbox{GI}/n+\mbox{M}$ queue can
be approximated by an OU process. One may raise the following
questions about this diffusion model: Is the exponential patience
time distribution essential for an overloaded queue to have a
simple approximate model? With more practical patience time assumptions,
can we still approximate the steady-state queue length and the
steady-state virtual waiting time by Gaussian random variables?

We will answer these question in our subsequent work. To illustrate this,
let us consider a
$\mbox{GI}/\mbox{GI}/n+\mbox{GI}$ queue. For call center operations,
it is reasonable to assume patience times to be iid since the
waiting line is usually invisible to customers. The
$\mbox{GI}/\mbox{GI}/n+\mbox{GI}$ queue is thus an important
building block for modeling call centers. \citet{Whitt06} obtained
the mean queue length and the mean virtual waiting time of this
queue by the fluid model. Let $H$ be the distribution function
of patience times. Assume that $H$ is absolutely continuous with
density $f_{H}$. The hazard rate function of $H$ is given by
\[
h(t) = \frac{f_{H}(t)}{1-H(t)}\quad\mbox{for }t\geq 0.
\]
Let $w$ be the mean virtual waiting time. Then, $H(w)$ is the fraction
of patience times that are less than $w$. This fraction should be
approximately equal to the abandonment fraction $(\rho-1)/\rho$.
Hence, the mean virtual waiting time can be obtained by solving
\[
H(w) = \frac{\rho-1}{\rho}.
\]
For $0<s<w$, the probability that a customer who arrived $s$ time units
ago is still in the buffer is around $1-H(s)$. This implies that the
mean queue length can be approximated by
\[
q = \int_{0}^{w} \lambda(1-H(s))\,\mathrm{d}s.
\label{eq:general-q}
\]
See \citet{Whitt06} for more details. In the steady state, the virtual
waiting time process and the queue length process fluctuate around $w$ and $q$, respectively.

Some observations on queues with an exponential patience time
distribution may help us in generalizing the diffusion model.
When either $n$ or $\gamma$ is large, it follows from
\eqref{eq:mean-virtual} and \eqref{eq:var-virtual} that the standard
deviation of the virtual waiting time is much smaller
than the mean. If this condition holds with a general patience time
assumption, the abandonment process will depend on the patience time
distribution mostly through a small neighborhood of $w$. As a
consequence, the scaled queue length process will be dictated by the
patience time hazard rate at $w$. In this case, we use $\gamma = 1/h(w)$
as the scaling factor in time. If the patience time hazard rate changes
slowly around $w$, with $\gamma=1/h(w)$, we may still use \eqref{eq:steadyQ},
\eqref{eq:var-queue}, \eqref{eq:steadyW}, and \eqref{eq:var-virtual}
to approximate the steady-state distributions and variances. In particular,
it was reported in \citet{MandelbaumZeltyn13} that the patience time hazard
rate in a large call center was nearly constant after the first several
seconds of waiting (see Figure~2 in their paper). We
expect that with the above modification, the approximate formulas
are useful in performance analysis for
such a call center. If the hazard rate changes rapidly around $w$, we
may exploit the approximation scheme in \citet{ReedWard08} and \cite{ReedTezcan12}
to include the hazard rate function on a neighborhood  of
$w$ in the diffusion model. The resulting performance approximations would
be more complex, but may still have closed-form formulas. To justify the
diffusion model for queues with a general patience time
distribution, we will modify the current asymptotic regimes to incorporate
the hazard rate function. More specifically, we will combine the space-time scaling
with the hazard rate scaling proposed by \citet{ReedWard08}
in the new asymptotic framework.

\appendix
\section*{Appendix: Proof of Theorem~\ref{theorem:FCLT}\label{section:ProofFCLT}}

\setcounter{section}{1}
\setcounter{equation}{0}

Let
\begin{equation}
S_{j,k}=\sum_{\ell=1}^{k}\xi_{j,\ell}\label{eq:partial-sum}
\end{equation}
be the $k$th partial sum of $\{\xi_{j,\ell}:\ell\in\mathbb{N}\}$.
Take $S_{j,0}=0$ by convention. We first prove a functional strong
law of large numbers for the superposition of time-scaled renewal
processes.

\begin{proposition} \label{proposition:FSLLN} Let
\begin{equation}
\bar{B}_{n}(t)=\frac{1}{n\gamma_{n}}\sum_{j=1}^{n}N_{j}(\gamma_{n}t)\quad\mbox{for }t\geq0.\label{eq:B_bar}
\end{equation}
Under the conditions of Theorem~\ref{theorem:FCLT},
\[
\bar{B}_{n}\overset{\text{a.s.}}{\rightarrow}\mu e\quad\mbox{as }n\rightarrow\infty\mbox{.}
\]

\end{proposition}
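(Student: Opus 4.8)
The plan is to prove pointwise almost-sure convergence at each fixed time via a uniform moment estimate and the Borel--Cantelli lemma, and then to upgrade to convergence in $\mathbb{D}$ using the monotonicity of $\bar{B}_n$.

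The first and only delicate ingredient is a moment bound for a single renewal process. Since each $N_j$ is an \emph{equilibrium} renewal process we have $\mathbb{E}[N_j(s)]=\mu s$ for all $s\ge0$, so there is no mean to correct. I would then establish
\[
C_0:=\sup_{s\ge1}\mathbb{E}\Big[\Big|\frac{N_j(s)}{s}-\mu\Big|^{3}\Big]<\infty .
\]
This follows from a Marcinkiewicz--Zygmund-type estimate for renewal counting processes: decomposing $N_j$ at its first renewal reduces matters to an ordinary renewal process with interrenewal distribution $F$, and the finiteness of the third moment of $F$ in assumption \eqref{eq:m3} gives $\mathbb{E}\big[|N_j(s)-\mu s|^{3}\big]=O(s^{3/2})$ as $s\to\infty$, whence the displayed supremum is finite (and in fact the summand tends to $0$). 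The regularity condition \eqref{eq:conditionF}, which is anyway in force, guarantees that $N_j(s)$ has finite moments on bounded intervals.

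Next, fix $t>0$ (the case $t=0$ being trivial) and write $\bar{B}_n(t)-\mu t=\tfrac1n\sum_{j=1}^{n}Y_{n,j}$ with $Y_{n,j}:=N_j(\gamma_n t)/\gamma_n-\mu t$. For each $n$ the random variables $Y_{n,1},\dots,Y_{n,n}$ are i.i.d.\ and centered, and since $\gamma_n\to\infty$ we have $\gamma_n t\ge1$, hence $\mathbb{E}|Y_{n,j}|^{3}\le C_0 t^{3}$, for all but finitely many $n$. A moment inequality for sums of i.i.d.\ random variables (of Rosenthal/Marcinkiewicz--Zygmund type) then gives $\mathbb{E}\big|\sum_{j=1}^{n}Y_{n,j}\big|^{3}=O(n^{3/2})$, so that $\mathbb{P}\big[\,|\bar{B}_n(t)-\mu t|>\varepsilon\,\big]=O(n^{-3/2})$ for every $\varepsilon>0$ by Markov's inequality. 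This is summable in $n$, so Borel--Cantelli yields $\bar{B}_n(t)\to\mu t$ almost surely.

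Finally, applying the previous step with $t$ ranging over the rationals in $[0,\infty)$ and intersecting the countably many probability-one events gives a single event of probability one on which $\bar{B}_n(t)\to\mu t$ for all rational $t\ge0$. On that event, since each $\bar{B}_n$ is nondecreasing and the limit $\mu e$ is continuous and nondecreasing, the standard fact that pointwise convergence of monotone functions to a continuous limit upgrades to local uniform convergence gives $\sup_{0\le t\le T}|\bar{B}_n(t)-\mu t|\to0$ for every $T>0$; hence $\bar{B}_n\to\mu e$ locally uniformly, and in particular in $\mathbb{D}$ under the $J_1$ topology, almost surely. The main obstacle is the uniform third-moment estimate of the first step; everything after it is routine, and that estimate is where assumption \eqref{eq:m3} (and, to a lesser extent, \eqref{eq:conditionF}) is genuinely used.
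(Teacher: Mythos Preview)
Your argument is correct. The key estimate $\mathbb{E}[|N_j(s)-\mu s|^3]=O(s^{3/2})$ does hold under \eqref{eq:m3}: after splitting off the delay $\xi_{j,1}$, the ordinary-renewal part is $O(s^{3/2})$ by the standard moment bound for renewal counts with a finite third interrenewal moment (see, e.g., Gut, \emph{Stopped Random Walks}), while the delay contributes only $O(s)$ since $\mathbb{E}[\xi_{j,1}^{3}\mathbf{1}_{\{\xi_{j,1}\le s\}}]\le \mu s\int_0^\infty u^3\,\mathrm{d}F(u)$ and $s^{3}\,\mathbb{P}[\xi_{j,1}>s]=O(s)$ when $F$ has a finite third moment. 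The Rosenthal/Marcinkiewicz--Zygmund, Borel--Cantelli, and monotonicity steps are then routine.

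The paper, however, takes a different and more elementary route: it uses the sandwich $S_{j,N_j(t)}\le t\le S_{j,N_j(t)+1}$, sums over $j=1,\dots,n$, divides by $\sum_{j=1}^n N_j(\gamma_n t)$, and invokes the ordinary strong law of large numbers for the i.i.d.\ array $\{\xi_{j,k}:k\ge2\}$ to show that both sandwiching ratios converge a.s.\ to $\mu^{-1}$. The two arguments coincide only at the final step, where monotonicity of $\bar B_n$ and continuity of $\mu e$ upgrade pointwise convergence to local uniform convergence. The trade-off is this: the paper's proof needs nothing beyond a finite \emph{first} moment of $F$, so for this particular proposition it actually works under strictly weaker hypotheses than are stated---assumption \eqref{eq:m3} is not used here at all (it enters only later, in the proof of the FCLT). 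Your approach genuinely consumes \eqref{eq:m3}, but in exchange it yields the quantitative tail bound $\mathbb{P}[\,|\bar B_n(t)-\mu t|>\varepsilon\,]=O(n^{-3/2})$, which would be useful if rates of convergence were of interest.
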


\begin{proof}
Since $S_{j,N_{j}(t)}\leq t\leq S_{j,N_{j}(t)+1}$
for $t>0$, then
\[
\frac{\sum_{j=1}^{n}S_{j,N_{j}(\gamma_{n}t)}}{\sum_{j=1}^{n}N_{j}(\gamma_{n}t)}\leq\frac{n\gamma_{n}t}{\sum_{j=1}^{n}N_{j}(\gamma_{n}t)}\leq\frac{\sum_{j=1}^{n}S_{j,N_{j}(\gamma_{n}t)+1}}{\sum_{j=1}^{n}N_{j}(\gamma_{n}t)}
\]
provided that $\sum_{j=1}^{n}N_{j}(\gamma_{n}t)>0$. Note that
\[
\sum_{j=1}^{n}S_{j,N_{j}(\gamma_{n}t)+1}=\sum_{j=1}^{n}\xi_{j,1}+\sum_{j=1}^{n}\sum_{k=2}^{N_{j}(\gamma_{n}t)+1}\xi_{j,k}.
\]
Because $N_{j}(\gamma_{n}t)\overset{\text{a.s.}}{\rightarrow}\infty$
as $n\rightarrow\infty$ for $t>0$, then
\[
\frac{\sum_{j=1}^{n}\sum_{k=2}^{N_{j}(\gamma_{n}t)+1}\xi_{j,k}}{\sum_{j=1}^{n}N_{j}(\gamma_{n}t)}\overset{\text{a.s.}}{\rightarrow}\mu^{-1}\quad\mbox{as }n\rightarrow\infty
\]
by the strong law of large numbers. In addition, $n^{-1}\sum_{j=1}^{n}N_{j}(\gamma_{n}t)\overset{\text{a.s.}}{\rightarrow}\infty$
as $n\rightarrow\infty$ for $t>0$, which implies that
\[
\frac{\sum_{j=1}^{n}\xi_{j,1}}{\sum_{j=1}^{n}N_{j}(\gamma_{n}t)}\overset{\text{a.s.}}{\rightarrow}0\quad\mbox{as }n\rightarrow\infty\mbox{.}
\]
Therefore,
\[
\frac{\sum_{j=1}^{n}S_{j,N_{j}(\gamma_{n}t)+1}}{\sum_{j=1}^{n}N_{j}(\gamma_{n}t)}\overset{\text{a.s.}}{\rightarrow}\mu^{-1}\quad\mbox{as }n\rightarrow\infty\mbox{.}
\]
Also,
\[
\frac{\sum_{j=1}^{n}S_{j,N_{j}(\gamma_{n}t)}}{\sum_{j=1}^{n}N_{j}(\gamma_{n}t)}=\frac{\sum_{j=1}^{n}S_{j,N_{j}(\gamma_{n}t)}}{\sum_{j=1}^{n}(N_{j}(\gamma_{n}t)-1)}\cdot\frac{\sum_{j=1}^{n}(N_{j}(\gamma_{n}t)-1)}{\sum_{j=1}^{n}N_{j}(\gamma_{n}t)}\overset{\text{a.s.}}{\rightarrow}\mu^{-1}\quad\mbox{as }n\rightarrow\infty\mbox{.}
\]
Then, $\bar{B}_{n}(t)\overset{\text{a.s.}}{\rightarrow}\mu t$ as
$n\rightarrow\infty$ for all $t\geq0$. Because $\bar{B}_{n}(t)$
is nondecreasing in $t$ and $e$ is a continuous function,
the proposition follows from Theorem VI.2.15 in
\citet{JacodShiryaev2002}.

\end{proof}

\begin{lemma}
\label{lemma:L-tilde}
Let
\begin{equation}
\tilde{L}_{n}(t)=\frac{1}{\sqrt{n\gamma_{n}}}\sum_{j=1}^{n}\sum_{k=2}^{N_{j}(\gamma_{n}t)+1}(1-\mu \xi_{j,k})\quad\mbox{for }t\geq0.\label{eq:B_check}
\end{equation}
Under the conditions of Theorem~\ref{theorem:FCLT},
\[
\tilde{L}_{n}\Rightarrow\hat{B}\quad\mbox{as }n\rightarrow\infty\mbox{.}
\]
\end{lemma}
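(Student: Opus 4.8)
The plan is to realize $\{\tilde{L}_n\}$ as a sequence of square-integrable martingales whose (predictable) quadratic variations are governed by Proposition~\ref{proposition:FSLLN}, and then to apply a martingale functional central limit theorem. For fixed $n$ and $1\le j\le n$, set $M_j(t)=\sum_{k=2}^{N_j(t)+1}(1-\mu\xi_{j,k})$, so that $\tilde{L}_n(t)=(n\gamma_n)^{-1/2}\sum_{j=1}^n M_j(\gamma_n t)$. The structural observation driving the argument is that the $k$th term of this sum is present exactly on the event $\{N_j(t)\ge k-1\}=\{\xi_{j,1}+\cdots+\xi_{j,k-1}\le t\}$, which is measurable with respect to $\xi_{j,1},\dots,\xi_{j,k-1}$ and hence independent of $\xi_{j,k}$; this is precisely why the inner sum in \eqref{eq:B_check} runs to $N_j(t)+1$ and not to $N_j(t)$. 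I would therefore work with the filtration that discloses $\xi_{j,k}$ at the epoch of the $(k-1)$st renewal of $N_j$, namely $\mathcal{F}^{(j)}_t=\sigma(\xi_{j,k}:\xi_{j,1}+\cdots+\xi_{j,k-1}\le t)$ and $\mathcal{F}^n_t=\bigvee_{j=1}^n\mathcal{F}^{(j)}_{\gamma_n t}$. Then $\tilde{L}_n$ is $\{\mathcal{F}^n_t\}$-adapted, and a conditioning argument shows that each $M_j$ is a mean-zero martingale: for $s<t$ the increment $M_j(t)-M_j(s)$ is a sum of terms $(1-\mu\xi_{j,k})\mathbf{1}_{\{\xi_{j,1}+\cdots+\xi_{j,k-1}\le t\}}$ over $k>N_j(s)+1$, and in each such term $\xi_{j,k}$ is fresh and independent of its own inclusion event, so the conditional mean given $\mathcal{F}^{(j)}_s$ vanishes. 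Since $F_e$ is the equilibrium distribution of $F$, $E[N_j(t)]=\mu t$, whence $M_j$, and so $\tilde{L}_n$, is square integrable with $E[\tilde{L}_n(t)^2]=\mu c_S^2 t$; by independence across $j$ and the deterministic time change, $\tilde{L}_n$ is a square-integrable martingale with $\tilde{L}_n(0)=0$.

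Next I would compute the quadratic variation and verify the Lindeberg condition. Each $M_j$ is a pure-jump martingale jumping at the renewal epochs of $N_j$, the jump at the $m$th renewal being $1-\mu\xi_{j,m+1}$, which is independent of the strict past and has mean $0$ and variance $c_S^2$; hence $\langle M_j\rangle_t=c_S^2 N_j(t)$ and, by independence of the $M_j$,
\[
\langle\tilde{L}_n\rangle_t=\frac{c_S^2}{n\gamma_n}\sum_{j=1}^n N_j(\gamma_n t)=c_S^2\,\bar{B}_n(t),
\]
with $\bar{B}_n$ the process of Proposition~\ref{proposition:FSLLN}. That proposition gives $\langle\tilde{L}_n\rangle\to c_S^2\mu e$ almost surely, which equals $\langle\hat{B}\rangle$. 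For the asymptotic negligibility of jumps, observe that the compensator of $\sum_{0<s\le t}(\Delta\tilde{L}_n(s))^2\mathbf{1}_{\{|\Delta\tilde{L}_n(s)|>\epsilon\}}$ equals $\bar{B}_n(t)\,E[(1-\mu\xi)^2\mathbf{1}_{\{|1-\mu\xi|>\epsilon\sqrt{n\gamma_n}\}}]$ for a generic service time $\xi$; here $\bar{B}_n(t)\to\mu t$ almost surely while the expectation tends to $0$ by dominated convergence (using $E[(1-\mu\xi)^2]=c_S^2<\infty$ and $\sqrt{n\gamma_n}\to\infty$), so this compensator vanishes and the conditional Lindeberg condition holds. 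Combining $\tilde{L}_n(0)=0$, $\langle\tilde{L}_n\rangle\to c_S^2\mu e$, and the Lindeberg condition, the martingale functional central limit theorem (see, e.g., \citet{JacodShiryaev2002}) yields $\tilde{L}_n\Rightarrow\hat{B}$.

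I expect the main obstacle to be the first step: selecting a filtration for which the ``one renewal ahead'' indexing in \eqref{eq:B_check} genuinely turns the compensated sum into a martingale, and checking the conditional-mean-zero property with care (in particular that, under this filtration, the increment $\Delta M_j$ at the $m$th renewal is revealed only at that epoch). Once this is secured, the predictable quadratic variation is handed over directly by Proposition~\ref{proposition:FSLLN}, and the Lindeberg estimate is a short computation.
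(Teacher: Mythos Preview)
Your martingale approach is correct and constitutes a genuinely different route from the paper's proof. The paper instead passes through Donsker's theorem and a random time change: it introduces an auxiliary i.i.d.\ sequence $\{\eta_k\}$ with law $F$, forms the partial-sum process $\tilde H_n(t)=(n\gamma_n)^{-1/2}\sum_{k\le\lfloor n\gamma_n t\rfloor}(1-\mu\eta_k)$, and combines $\tilde H_n\Rightarrow\hat H$ with $\bar B_n\Rightarrow\mu e$ (Proposition~\ref{proposition:FSLLN}) to get $\tilde H_n\circ\bar B_n\Rightarrow\mu^{1/2}\hat H\stackrel{d}{=}\hat B$; it then asserts that $\tilde L_n$ and $\tilde H_n\circ\bar B_n$ have the same law as processes, which finishes the argument. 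Both proofs hinge on Proposition~\ref{proposition:FSLLN} in the same way: in the paper it supplies the time change, in your argument it identifies the limit of $\langle\tilde L_n\rangle$.

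What your approach buys is that the role of the ``one renewal ahead'' indexing is made completely transparent: you choose the forward-looking filtration in which $\xi_{j,k}$ is revealed at the $(k-1)$st renewal epoch, so the jump times $S_{j,m}$ are predictable, the jump sizes $1-\mu\xi_{j,m+1}$ have conditional mean zero and conditional variance $c_S^2$, and the identities $\langle M_j\rangle_t=c_S^2 N_j(t)$ and the Lindeberg estimate fall out mechanically. The paper's route is shorter on the page but rests on the process-level identity in law between $\tilde L_n$ and $\tilde H_n\circ\bar B_n$, which is precisely the point where the dependence between the summed $\xi_{j,k}$'s and the random counts $N_j(\gamma_n t)$ has to be dealt with; your argument handles this dependence head-on through the filtration and the martingale structure rather than through a distributional coupling. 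Either approach is acceptable here, and your anticipated ``main obstacle'' is exactly the right place to be careful.
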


\begin{proof}
Let $\{\eta_{k}:k\in\mathbb{N}\}$ be a sequence of iid
random variables following distribution $F$. Then, $\mu \eta_{k}$ has
mean $1$ and variance $c_{S}^{2}$. Put
\[
\tilde{H}_{n}(t)=\frac{1}{\sqrt{n\gamma_{n}}}\sum_{k=1}^{\lfloor n\gamma_{n}t\rfloor}(1-\mu \eta_{k})\quad\mbox{for }t\geq0.
\]
By Donsker's theorem, $\tilde{H}_{n}\Rightarrow\hat{H}$ as $n\rightarrow\infty$,
where $\hat{H}$ is a driftless Brownian motion with variance $c_{S}^{2}$ and $\hat{H}(0)=0$.
By \eqref{eq:B_bar},
\[
\tilde{H}_{n}(\bar{B}_{n}(t))=\frac{1}{\sqrt{n\gamma_{n}}}\sum_{k=1}^{n\gamma_{n}\bar{B}_{n}(t)}(1-\mu \eta_{k})=\frac{1}{\sqrt{n\gamma_{n}}}\sum_{k=1}^{N_{1}(\gamma_{n}t)+\cdots+N_{n}(\gamma_{n}t)}(1-\mu \eta_{k}).
\]
It follows from Proposition~\ref{proposition:FSLLN} and the random-time-change
theorem that $\tilde{H}_{n}\circ\bar{B}_{n}\Rightarrow\mu^{1/2}\hat{H}$ as $n\rightarrow\infty$.
Because $\tilde{L}_{n}$ has the same distribution as $\tilde{H}_{n}\circ\bar{B}_{n}$
and $\mu^{1/2}\hat{H}$ has the same distribution as $\hat{B}$, the
lemma follows.

\end{proof}

\begin{lemma} \label{lemma:tightness} Under the conditions of Theorem~\ref{theorem:FCLT},
for all $0\leq r\leq s\leq t$ and $n\in\mathbb{N}$, there exists
$0<c<\infty$ such that
\[
\mathbb{E}[(\tilde{B}_{n}(s)-\tilde{B}_{n}(r))^{2}(\tilde{B}_{n}(t)-\tilde{B}_{n}(s))^{2}]\leq c(t-r)^{2}.
\]

\end{lemma}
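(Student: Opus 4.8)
The plan is to prove the inequality by a direct fourth-moment computation that exploits the independence of the $n$ renewal processes $N_1,\dots,N_n$. Fix $n$ and the triple $0\le r\le s\le t$, write $L_1=\gamma_n(s-r)$ and $L_2=\gamma_n(t-s)$, and set $\zeta_j=N_j(\gamma_n s)-N_j(\gamma_n r)-\mu L_1$, $\eta_j=N_j(\gamma_n t)-N_j(\gamma_n s)-\mu L_2$, so that $\tilde B_n(s)-\tilde B_n(r)=(n\gamma_n)^{-1/2}\sum_{j=1}^n\zeta_j$ and $\tilde B_n(t)-\tilde B_n(s)=(n\gamma_n)^{-1/2}\sum_{j=1}^n\eta_j$. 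Since the $N_j$ are independent and each is stationary, the pairs $(\zeta_j,\eta_j)$ are i.i.d.\ in $j$ with $\mathbb E\zeta_j=\mathbb E\eta_j=0$. Expanding $\big(\sum_j\zeta_j\big)^2\big(\sum_k\eta_k\big)^2$ over index quadruples and discarding every term containing an index that occurs only once (such a term vanishes by independence across $j$ and centering), only the ``all equal'' pattern and the two ``paired'' patterns survive, and
\[
\mathbb E\big[(\tilde B_n(s)-\tilde B_n(r))^2(\tilde B_n(t)-\tilde B_n(s))^2\big]
=\frac{\mathbb E[\zeta_1^2\eta_1^2]}{n\gamma_n^2}
+\frac{n-1}{n}\cdot\frac{\mathbb E[\zeta_1^2]\,\mathbb E[\eta_1^2]+2\,(\mathbb E[\zeta_1\eta_1])^2}{\gamma_n^2}.
\]
Everything then reduces to a handful of moments of a single stationary renewal process over the two adjacent windows.

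For the ``paired'' part I would invoke the standard fact that the variance of a renewal increment is at most a constant multiple of the window length: there is a constant $C$ depending only on $F$ with $\mathbb E[\zeta_1^2]\le CL_1$ and $\mathbb E[\eta_1^2]\le CL_2$ (a consequence of stationarity, the regularity condition \eqref{eq:conditionF}, which prevents renewals from accumulating faster than linearly in a short window, and the finite second moment of $F$). By Cauchy--Schwarz $(\mathbb E[\zeta_1\eta_1])^2\le\mathbb E[\zeta_1^2]\mathbb E[\eta_1^2]\le C^2L_1L_2$, and since $L_1L_2=\gamma_n^2(s-r)(t-s)\le\tfrac14\gamma_n^2(t-r)^2$, the ``paired'' part is at most $\tfrac34 C^2(t-r)^2$, already of the required form. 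Thus it remains only to show that the first term is also $O((t-r)^2)$, uniformly in $n$ and in $r\le s\le t$.

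The remaining, and genuinely delicate, ingredient is therefore the estimate $\mathbb E[\zeta_1^2\eta_1^2]\le C'\gamma_n^2(s-r)(t-s)$ for a constant $C'$ depending only on $F$; once this is in hand the first term is at most $C'(s-r)(t-s)/n\le\tfrac14 C'(t-r)^2$, and adding the two contributions finishes the proof with $c=\tfrac34 C^2+\tfrac14 C'$. To obtain it I would condition on the history $\mathcal F_{\gamma_n s}$ of $N_1$ up to time $\gamma_n s$: then $\zeta_1$ is $\mathcal F_{\gamma_n s}$-measurable and the conditional law of $\eta_1$ given $\mathcal F_{\gamma_n s}$ depends only on the forward recurrence time $R$ at $\gamma_n s$, and one splits according to whether the window $(\gamma_n s,\gamma_n t]$ contains a renewal. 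On $\{R>L_2\}$ one has $\eta_1=-\mu L_2$ exactly, and here the finite third moment \eqref{eq:m3} enters through the tail bound $\mathbb P[R>L_2]=\mu\int_{L_2}^{\infty}(1-F(u))\,\mathrm du\le C L_2^{-2}$, which keeps $\mu^2L_2^2\,\mathbb E[\zeta_1^2\mathbf 1\{R>L_2\}]$ from picking up an extra power of $\gamma_n$; on $\{R\le L_2\}$ one uses a renewal-function bound for the first two moments of an ordinary renewal count over a window, together with $\mathbb P[R\le L_2]\le\mu L_2$, and controls the interaction with $\zeta_1$ using the adjacency of the two windows (equivalently, the ``martingale-plus-residual'' decomposition $\zeta_1=\sum_{k\in I_1}(1-\mu\xi_{1,k})+\mu(R_1(\gamma_n s)-R_1(\gamma_n r))$, in which the index range $I_1$ is disjoint from and adjacent to that of $\eta_1$) and, again, the moment bounds for the renewal counts. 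The main obstacle is precisely this last step: one must keep the power of the time-scaling factor $\gamma_n$ in $\mathbb E[\zeta_1^2\eta_1^2]$ down to $\gamma_n^2$, uniformly over all window lengths and all $n$, and this is where stationarity, the regularity condition \eqref{eq:conditionF}, and the finite third moment of $F$ must all be used in concert; the $1/n$ prefactor carried by this term is what makes the resulting bound suffice. Everything else—the expansion and the ``paired'' estimates—is a routine second-moment computation.
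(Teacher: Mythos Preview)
Your expansion over index quadruples and your treatment of the two ``paired'' terms are exactly what the paper does (the paper writes the identical three-term formula and bounds the paired terms with the same second-moment estimate $\mathbb E[\zeta_1^2]\le c_1L_1$, $\mathbb E[\eta_1^2]\le c_1L_2$ together with H\"older). The difference is in the ``all-equal'' term $\mathbb E[\zeta_1^2\eta_1^2]$.

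You try to establish the \emph{product} bound $\mathbb E[\zeta_1^2\eta_1^2]\le C'L_1L_2$, and you correctly identify this as the delicate step; but your conditioning argument is only a sketch. In particular, on $\{R>L_2\}$ you still have to control the dependence between $\zeta_1^2$ and the tail event $\{R>L_2\}$ (they are \emph{not} independent, since the forward recurrence time at $\gamma_n s$ is determined by the same renewal trajectory that produces $\zeta_1$), and on $\{R\le L_2\}$ the ``martingale-plus-residual'' decomposition is named but not executed. So the key estimate is not actually proved.

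The paper avoids this entirely: it does not need the product bound at all. It simply cites inequality~(8) of \citet{Whitt85}, which for a single stationary renewal process (and this is where condition~\eqref{eq:conditionF} enters) gives the \emph{square-of-sum} bound
\[
\mathbb E\big[(\check N_1(\gamma_n s)-\check N_1(\gamma_n r))^2(\check N_1(\gamma_n t)-\check N_1(\gamma_n s))^2\big]\le c_1\gamma_n^2(t-r)^2.
\]
After dividing by $n\gamma_n^2$ this term contributes at most $(c_1/n)(t-r)^2\le c_1(t-r)^2$, and the lemma follows with $c=3c_1^2+c_1$. So your target $C'L_1L_2$ is stronger than required; the known estimate $c_1(L_1+L_2)^2$ already suffices, and the entire conditioning program can be dropped.
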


\begin{proof}
Let $\check{N}_{j}(u)=N_{j}(u)-\mu u$ for $u\geq0$
and $j=1,\ldots,n$. Because $N_{j}$ is a stationary renewal process,
by inequalities (7) and (8) in \citet{Whitt85}, there exists $c_{1}<\infty$
such that
\begin{equation}
\mathbb{E}[(\check{N}_{j}(s)-\check{N}_{j}(r))^{2}]\leq c_{1}(s-r)\label{eq:inequality1}
\end{equation}
and
\begin{equation}
\mathbb{E}[(\check{N}_{j}(s)-\check{N}_{j}(r))^{2}(\check{N}_{j}(t)-\check{N}_{j}(s))^{2}]\leq c_{1}(t-r)^{2}\label{eq:inequality2}
\end{equation}
for all $0\leq r\leq s\leq t$. (The regularity condition (\ref{eq:conditionF})
is required for inequality \eqref{eq:inequality2}.) In
addition, it follows from \eqref{eq:inequality1} and H\"{o}lder's inequality
that
\begin{equation}
\mathbb{E}[|\check{N}_{j}(s)-\check{N}_{j}(r)||\check{N}_{j}(t)-\check{N}_{j}(s)|]\leq c_{1}(s-r)^{1/2}(t-s)^{1/2}\leq c_{1}(t-r).\label{eq:inequality3}
\end{equation}
Because $N_{1},\ldots,N_{n}$ are iid processes,
\begin{align*}
\mathbb{E}[(\tilde{B}_{n}(s)-\tilde{B}_{n}(r))^{2}(\tilde{B}_{n}(t)-\tilde{B}_{n}(s))^{2}]
 & =\frac{1}{n\gamma_{n}^{2}}\mathbb{E}[(\check{N}_{1}(\gamma_{n}s)-\check{N}_{1}(\gamma_{n}r))^{2}(\check{N}_{1}(\gamma_{n}t)-\check{N}_{1}(\gamma_{n}s))^{2}]\\
&\quad+\frac{n-1}{n\gamma_{n}^{2}}\mathbb{E}[(\check{N}_{1}(\gamma_{n}s)-\check{N}_{1}(\gamma_{n}r))^{2}]\mathbb{E}[(\check{N}_{1}(\gamma_{n}t)-\check{N}_{1}(\gamma_{n}s))^{2}]\\
&\quad+\frac{2(n-1)}{n\gamma_{n}^{2}}\mathbb{E}[(\check{N}_{1}(\gamma_{n}s)-\check{N}_{1}(\gamma_{n}r))(\check{N}_{1}(\gamma_{n}t)-\check{N}_{1}(\gamma_{n}s))]^{2}\\
 & \leq c_{1}(t-r)^{2}+c_{1}^{2}(s-r)(t-s)+2c_{1}^{2}(t-r)^{2},
\end{align*}
in which the inequality is obtained by \eqref{eq:inequality1}--\eqref{eq:inequality3}.
The lemma follows with $c=3c_{1}^{2}+c_{1}$.

\end{proof}

\begin{proof}[Proof of Theorem~\ref{theorem:FCLT}.] For $j\in\mathbb{N}$,
let
\begin{equation}
R_{j}(t)=S_{j,N_{j}(t)+1}-t\label{eq:recess}
\end{equation}
be the recess of $N_{j}$ at $t\geq0$. In particular,
\begin{equation}
R_{j}(0)=\xi_{j,1}.\label{eq:recess0}
\end{equation}
Because $N_{1},\ldots,N_{n}$ are iid stationary renewal processes,
$R_{1}(t),\ldots,R_{n}(t)$ are iid random variables following distribution
$F_{e}$ for all $t\geq0$, each having mean
\[
m_{e}=\int_{0}^{\infty}t\,\mathrm{d}F_{e}(t)=\frac{1+c_{S}^{2}}{2\mu}
\]
and variance
\[
\sigma_{e}^{2}=\int_{0}^{\infty}t^{2}\,\mathrm{d}F_{e}(t)-m_{e}^{2}=\frac{\mu}{3}\int_{0}^{\infty}t^{3}\,\mathrm{d}F(t)-m_{e}^{2}.
\]
Note that  $\sigma_{e}^{2}<\infty$ by \eqref{eq:m3}. Let
\[
\tilde{R}_{n}(t)=\frac{1}{\sqrt{n\gamma_{n}}}\sum_{j=1}^{n}(R_{j}(\gamma_{n}t)-m_{e}).
\]
Then,
\[
\mathbb{E}[\tilde{R}_{n}(t)^{2}]=\frac{\sigma_{e}^{2}}{\gamma_{n}}\rightarrow0\quad\mbox{as }n\rightarrow\infty,
\]
which implies that $\tilde{R}_{n}(t)\Rightarrow0$ as $n\rightarrow\infty$
for $t\geq0$. By Theorem 3.9 in \citet{Billingsley99},
\begin{equation}
(\tilde{R}_{n}(t_{1}),\ldots,\tilde{R}_{n}(t_{\ell}))\Rightarrow0\quad\mbox{as }n\rightarrow\infty\label{eq:R_tilde}
\end{equation}
for any $\ell\in\mathbb{N}$ and $0\leq t_{1}<\cdots<t_{\ell}$. By
\eqref{eq:partial-sum}, \eqref{eq:recess}, and \eqref{eq:recess0},
\[
R_{j}(t)=\xi_{j,1}+\sum_{k=2}^{N_{j}(t)+1}\xi_{j,k}-t=R_{j}(0)+\sum_{k=2}^{N_{j}(t)+1}(\xi_{j,k}-\mu^{-1})+\mu^{-1}N_{j}(t)-t.
\]
Then, by \eqref{eq:B_tilde} and \eqref{eq:B_check}, we obtain
\begin{equation}
\tilde{B}_{n}(t)=-\mu\tilde{R}_{n}(0)+\mu\tilde{R}_{n}(t)+\tilde{L}_{n}(t).\label{eq:B_tilde_composition}
\end{equation}
We deduced from \eqref{eq:R_tilde}, \eqref{eq:B_tilde_composition}, and Lemma~\ref{lemma:L-tilde}
that
\[
(\tilde{B}_{n}(t_{1}),\ldots,\tilde{B}_{n}(t_{\ell}))\Rightarrow(\hat{B}(t_{1}),\ldots,\hat{B}(t_{\ell}))\quad\mbox{as }n\rightarrow\infty\mbox{.}
\]
Finally, it follows from Lemma~\ref{lemma:tightness} and Theorem~13.5
in \citet{Billingsley99} (with condition (13.13) replaced by (13.14))
that $\tilde{B}_{n}\Rightarrow\hat{B}$ as $n\rightarrow\infty$.

\end{proof}

\bibliographystyle{ormsv080}
\bibliography{../refs}

\end{document}